\newcommand{\E}{\mathbb{E}}
\newcounter{cprop}[section]
\newcommand{\mcB}{\mathcal{B}}
\newtheorem{definition}[cprop]{Definition}
\newtheorem{remark}[cprop]{Remark}
\newtheorem{lemma}[cprop]{Lemma}
\newtheorem{example}[cprop]{Example}
\newtheorem{proposition}[cprop]{Proposition}
\newtheorem{theorem}[cprop]{Theorem}
\newtheorem{corollary}[cprop]{Corollary}
\title[Random attractors for degenerate SPDE]{Random attractors for degenerate stochastic partial differential equations}
\author[B. Gess]{Benjamin Gess}
\email{gess@math.tu-berlin.de}
\address{Institut f\"ur Mathematik, Technische Universit\"at Berlin (MA 7-5)\\
Stra\ss{}e des 17. Juni 136, 10623 Berlin, Germany}
\thanks{{\bf Acknowledgements:} Supported by DFG-Internationales Graduiertenkolleg “Stochastics and Real World Models”, the SFB-701 and the BiBoS-Research Center. \\
The author would like to thank Michael R\"ockner for valuable discussions and comments. Several helpful comments by the anonymous referee are gratefully acknowledged.}
\keywords{stochastic partial differential equations, stochastic porous medium equation, stochastic $p$-Laplace equation, random dynamical systems, random attractors, strictly stationary solutions, regularization by noise, ergodicity.}
\subjclass[2010]{37L55, 60H15; 76S05, 35J92, 35K57}
\begin{document}
\begin{abstract}
  We prove the existence of random attractors for a large class of degenerate stochastic partial differential equations (SPDE) perturbed by joint additive Wiener noise and real, linear multiplicative Brownian noise, assuming only the standard assumptions of the variational approach to SPDE with compact embeddings in the associated Gelfand triple. This allows spatially much rougher noise than in known results. The approach is based on a construction of strictly stationary solutions to related strongly monotone SPDE. Applications include stochastic generalized porous media equations, stochastic generalized degenerate $p$-Laplace equations and stochastic reaction diffusion equations. For perturbed, degenerate $p$-Laplace equations we prove that the deterministic, $\infty$-dimensional attractor collapses to a single random point if enough noise is added. 
  (The final publication is available at \href{http://link.springer.com/article/10.1007%2Fs10884-013-9294-5}{link.springer.com}).
\end{abstract}

\maketitle


\section{Introduction}

We study the long time behavior of solutions to SPDE of the form
\begin{equation}\label{eqn:SPDE}
  dX_t = A(t,X_t)dt + dW_t + \mu X_t \circ d\b_t,
\end{equation}
with drift $A$ satisfying a superlinear/degenerate coercivity property (cf.\ $(A3)$ below) and $\mu \in \R$. Here, $\circ$ is the Stratonovich stochastic integral. Our analysis will be based on a variational formulation of \eqref{eqn:SPDE} with respect to a Gelfand triple $V \subseteq H \subseteq V^*$. We extend known results on the existence of random attractors for quasilinear SPDE by allowing spatially much rougher noise, which in applications corresponds to assuming less spatial correlations. More precisely, we only require the Wiener process $W$ to take values in $H$ which is the natural choice of noise as far as trace-class noise is considered. This generalizes the results given in \cite{BGLR10,GLR11}. In addition, we treat joint additive and real, linear multiplicative noise, which causes difficulties in establishing asymptotic a priori bounds (i.e.\ proving bounded absorption). The key point is to realize that starting from probabilistic (variational) solutions it is possible to construct {\it $\o$-wisely} strictly stationary solutions (cf.\ Definition \ref{def:strict_stat} below).

Probabilistic methods to analyze the long-time behavior of \eqref{eqn:SPDE} in terms of the ergodicity of the corresponding Markovian semigroup often require non-degeneracy conditions for the noise. On the other hand, so far the construction of stochastic flows for quasilinear SPDE required spatial smoothness of the noise, thus excluding non-degenerate noise in many cases. Our construction remedies this obstacle and hence allows to combine ergodicity results with the essentially pathwise methods from the theory of random dynamical systems (RDS). We emphasize this point by proving that the $\infty$-dimensional attractor of perturbed, degenerate, deterministic $p$-Laplace equations collapses to a single random point if non-degenerate additive noise is included. While such a regularizing effect of additive noise in terms of reduction of the (fractal) dimension of the attractor is well-known for several systems with corresponding finite-dimensional deterministic attractor (cf.\ e.g.\ \cite{CCLR07,CS04,KS04} and Section \ref{sec:str_mix} below), the regularizing effect observed in this paper is stronger in the sense that an $\infty$-dimensional deterministic attractor is shown to reduce to a zero-dimensional random attractor.

The generation of RDS associated to SPDE is usually proven by transforming the SPDE into a random PDE. In case of SPDE driven by additive noise the transformation is based on subtracting the noise, which requires the noise to take values in the domain of the drift. However, for semilinear equations
  $$dX_t = \left(AX_t + F(X_t) \right)dt + dW_t,$$
it is well known that in the construction of an associated RDS the noise $W$ may be allowed to be an $H$-valued process by instead subtracting the stationary Ornstein-Uhlenbeck process corresponding to the linear SPDE
  $$dX_t = AX_tdt + dW_t.$$
We generalize this method to quasilinear SPDE by constructing strictly stationary solutions to nonlinear SPDE of the form
  $$dX_t = M(X_t)dt + dW_t,$$
where $M: V \to V^*$ is a strongly monotone operator. This enables us to use the regularizing property with respect to the noise that is built into the variational approach to SPDE. More precisely, for noise $W$ taking values in $H$ the strictly stationary solution takes values in the domain of $A$, i.e.\ in $\mcD(A)=V \subseteq H$ almost surely.

The standard transformation of \eqref{eqn:SPDE} into a random PDE proceeds by first canceling the multiplicative noise, by setting $X^{(1)}(t,s;\o)x := e^{-\mu \b_t(\o)}X(t,s;\o)x$ and then by subtracting the resulting additive noise $X^{(2)}(t,s;\o)x := X^{(1)}(t,s;\o)x - \int_s^t e^{-\mu\b_r} \circ dW_r(\o)$. Thus, we have used the transformation $\td T(t,s;\o)x := e^{-\mu \b_t(\o)}x - \int_s^t e^{-\mu\b_r} \circ dW_r(\o)$ and $X^{(2)}$ solves an $\o$-wise random PDE which can be used to deduce $\o$-wise asymptotic a priori bounds needed to prove the existence of a random attractor. However, due to the $s$-dependence of the transformation mapping, asymptotic bounds for $X^{(2)}$ (for $s$ small enough) do not imply analogous bounds for $X$ since $\td T^{-1}(t,s;\o)x$ is not necessarily uniformly bounded for all $s$ small enough. Instead of subtracting the integral $\int_s^t e^{-\mu \b_r} \circ dW_r$ itself, it is therefor crucial for the analysis of the long-time behavior of \eqref{eqn:SPDE} to base the transformation on a (nonlinear) strictly stationary solution $u: \R \times \O \to H$ corresponding to
    $$ u_t = u_s + \int_s^t M(u_r)dr + \int_s^t e^{-\mu z_r}\circ dW_r,\quad \P\text{-a.s. }\forall s \le t,$$
where $z_t$ is the stationary Ornstein-Uhlenbeck process for $dz_t = -z_tdt + d\b_t$. The corresponding transformation $T(t;\o)x := e^{-\mu z_t(\o)}x - u_t(\o)$ does not depend on the initial time $s$ anymore. In order to deduce asymptotic bounds we require bounds on the growth of $u_s$ as $s \to -\infty$. Such bounds may be derived from Birkhoff's ergodic theorem as soon as we have proven $u_t$ to be strictly stationary (i.e.\ $u_t(\o) = u_0(\t_t\o)$). Again, this technique is applicable only for the stationary solution $u$, not for the integral $\int_s^t e^{-\mu\b_r} \circ dW_r$ itself.

The usual approach to prove the existence of random attractors consists of two steps. First, the existence of a bounded attracting set is proven which corresponds to a uniform bound on the $H$-norm of the flow. In the second step this is used to prove the existence of a compact attracting set using bounds on some stronger norm $\|\cdot\|_S$ such that $S \hookrightarrow H$ is compact. The control of the stronger norm $\|\cdot\|_S$ requires the noise to be sufficiently smooth. In case of stochastic porous media equations (SPME) this essentially leads to the assumption of $\D W$ taking values in $V=L^{\a}(\mcO)$ \cite{GLR11} or less restrictive $\nabla W$ taking values in $V=L^\a(\mcO)$ \cite{BGLR10}, while for stochastic reaction diffusion equations (SRDE) \cite{CF94} requires $W \in H^2(\mcO) \cap H_0^1(\mcO)$. For more details we refer to Section \ref{sec:appl}. Adapting a method from \cite{CCD99} we prove compactness of the stochastic flow based only on the standard coercivity assumption of the variational approach to SPDE and on the compactness of the associated Gelfand triple $V \subseteq H \subseteq V^*$. Control of a stronger $\|\cdot\|_S$-norm thus becomes obsolete and we can allow rougher noise taking values in $H$.

In addition, we can drop the approximative coercivity property of the drift \cite[$(H5)$]{GLR11}, which allows to treat \textit{generalized} porous media equations and \textit{generalized} $p$-Laplace equations. In particular, the framework of \cite{BGLR10} is fully covered. For more details on SPME we refer to  \cite{BDPR08,BDPR08-2,BDPR09,DPR04,DPRRW06,G11c,K06,RRW07,RW08} and the references therein.

The long-time behavior of SPDE of the form \eqref{eqn:SPDE} with degenerate drift fundamentally differs from the case of singular drifts (as e.g.\ stochastic fast diffusion equations). The case of singular SPDE has been treated in \cite{G11d}. These differences can already be observed in the deterministic case in which the solutions to the porous medium equation decay polynomially (cf.\ \cite{A81}), while the fast diffusion equation exhibits finite time extinction (cf.\ \cite{V07} and references therein).
For further deterministic results on the existence of attractors for degenerate PDE we refer to \cite{CCD99,CG01,CG03,CD00,CR10,LYZ10,TY03,W10,YSZ07} and the references therein.

In Section 1 we will recall the basic notions and results concerning stochastic flows, RDS and random attractors. In Section 2 we will provide our precise framework and state the main results. Applications to various SPDE are given in Section 3 and proofs of the results are provided in Section 4.

\section{Stochastic Flows and RDS}
We recall the framework of stochastic flows, RDS and random attractors. For more details we refer to \cite{A98,CDF97,CF94,S92}. Let $(H,d)$ be a complete separable metric space  and $(\O,\F,\P,\{\t_t\}_{t \in \R})$ be a metric dynamical system, i.e.\ $(t,\o) \mapsto \theta_t(\o)$ is $(\mcB(\R) \otimes \mcF,\mcF)$-measurable, $\theta_0 =$ id, $\theta_{t+s} = \theta_t \circ \theta_s$ and $\theta_t$ is $\P$-preserving for all $s,t \in \R$.

\begin{definition}
  A family of maps $S(t,s;\o): H \to H$, $s \le t$ is said to be a stochastic flow, if for all $\o \in \O$ 
  \begin{enumerate}
    \item[(i)] $S(s,s;\o) = id_H$, for all $s \in \R$.
    \item[(ii)] $S(t,s;\o)x = S(t,r;\o)S(r,s;\o)x$, for all $t \ge r \ge s$, $x \in H$.
  \end{enumerate}
  A stochastic flow $S(t,s;\o)$ is called 
  \begin{enumerate}
    \item[(iii)] continuous if $x \mapsto S(t,s;\o)x$ is continuous for all $s \le t$, $\o \in \O$. 
    \item[(iv)] measurable if $\o \to S(t,s;\o)x$ is measurable for all $t \ge s$, $x \in H$.
    \item[(v)] a cocycle with respect to $\t_t$ if $ S(t,s;\o)x = S(t-s,0;\t_s\o)x$ for all $x \in H$, $t \ge s$, $\o \in \O$. 
  \end{enumerate} 
\end{definition}

The concept of RDS is closely related to cocycle stochastic flows. Let $\vp$ be an RDS, i.e.\ $\vp: \R_+ \times \O \times H \to H$ measurable, $\vp(0,\o) = id_H$ and 
  $$\vp(t+s,\o) = \vp(t,\t_s \o) \circ \vp(s,\o),\quad \forall s,t \ge 0,\ \o \in \O.$$
Then $S(t,s;\o) := \vp(t-s,\t_s \o)$ is a measurable cocycle stochastic flow. Conversely, let $S(t,s;\o)$ be a cocycle stochastic flow and $(t,\o,x) \mapsto S(t,0;\o)x$ be measurable. Then $\vp(t,\o) := S(t,0;\o)$ defines an RDS. 

\begin{definition}
  A function $f: \R \to \R_+$ is said to be
  \begin{enumerate}
   \item tempered if $\lim_{r \to -\infty} f_r e^{\eta r} = 0$ for all $\eta > 0$.
   \item exponentially integrable if $f \in L^1_{loc}(\R;\R_+)$ and $\int_{-\infty}^t f_r e^{\eta r}dr < \infty$ for all $t \in \R$, $\eta > 0$.
  \end{enumerate}
\end{definition}
We note that the product of two tempered functions is tempered and the product of a tempered and an exponentially integrable function is exponentially integrable if it is locally integrable. For two subsets $A,B \subseteq H$ we define
  $$ d(A,B) := \begin{cases}
                 \sup\limits_{a\in A} \inf\limits_{b \in B} d(a,b),& \text{ if } A \ne \emptyset \\
                 \infty, &                                \text{ otherwise.}
               \end{cases} $$
In the following let $S(t,s;\o)$ be a stochastic flow.

\begin{definition}
  A family $\{D(t,\o)\}_{t \in \R,\o \in \O}$ of subsets of $H$ is said to be
  \begin{enumerate}
   \item a random closed set if it is $\P$-a.s.\ closed and $\o \to d(x,D(t,\o))$ is measurable for each $x \in H$, $t \in \R$. In this case we also call $D$ measurable.
   \item right lower-semicontinuous if for each $t \in \R$, $\o \in \O$, $y \in D(t,\o)$ and $t_n \downarrow t$ there is a sequence $y_n \in D(t_n,\o)$ such that $y_n \to y$ or equivalently $d(y,D(t_n,\o))\to 0$. 
  \item tempered if $t \mapsto \|D(t,\o)\|_H$ is a tempered function for all $\o \in \O$ (assuming $H$ to be a normed space).
  \item strictly stationary if $D(t,\o) = D(0,\t_t\o)$ for all $\o \in \O$, $t \in \R$.
  \end{enumerate}
\end{definition}

From now on let $\mcD$ be a system of families $\{D(t,\o)\}_{t \in \R,\o \in \O}$ of subsets of $H$. 

\begin{definition}\label{def:abs}
  A family $\{K(t,\o)\}_{t \in \R,\o \in \O}$ of subsets of $H$ is said to be
  \begin{enumerate}
   \item $\mcD$-absorbing, if there exists an absorption time $s_0 = s_0(\o,D,t)$ such that
     \[ S(t,s;\o)D(s,\o) \subseteq K(t,\o),\quad \forall s \le s_0, \]
   \item $\mcD$-attracting, if 
     \[ d(S(t,s;\o)D(s,\o),K(t,\o)) \to 0,\quad s \to -\infty \]
  \end{enumerate}
  for all $D \in \mcD$, $t \in \R$ and $\o \in \O_0$, where $\O_0 \subseteq \O$ is a subset of full $\P$-measure.
\end{definition}

\begin{definition}\label{def:asympt_cmpt_flow}
  A stochastic flow $S(t,s;\o)$ is called
  \begin{enumerate}
   \item $\mcD$-asymptotically compact if there is a $\mcD$-attracting family $\{K(t,\o)\}_{t\in\R,\o \in \O}$ of compact subsets of $H$,
   \item compact if for all $t > s$, $\o \in \O$ and $B \subseteq H$ bounded, $S(t,s;\o)B$ is precompact in $H$.
  \end{enumerate}  
\end{definition}

We define the $\O$-limit set by
  \[ \O(D,t;\o) := \bigcap_{r < t} \overline{ \bigcup_{\tau < r} S(t,\tau;\o)D(\tau,\o)} \]
and observe
  \[ \O(D,t;\o) = \{x \in H|\ \exists s_n \to -\infty,\ x_n \in D(s_n,\o) \text{ such that } S(t,s_n;\o)x_n \to x\}. \]

\begin{definition}\label{def:ra}
  Let $S(t,s;\o)$ be a stochastic flow. A family of subsets $\{\mcA(t,\o)\}_{t \in \R,\o \in \O}$ of $H$ is called a $\mcD$-random attractor for $S(t,s;\o)$ if it satisfies $\P$-a.s.
  \begin{enumerate}
   \item $\mcA(t,\o)$ is nonempty and compact for each $t \in \R$.
   \item $\mcA$ is $\mcD$-attracting.
   \item $\mcA(t,\o)$ is invariant under $S(t,s;\o)$, i.e. for each $s \le t$
         \[ S(t,s;\o)\mcA(s,\o) = \mcA(t,\o).\]
  \end{enumerate}
\end{definition}

In the above definition we do not require the random attractor to be strictly stationary, since we are also interested in the long-time behavior of stochastic flows that do not necessarily satisfy the cocycle property. We will observe below that the proof of existence of a random attractor for such time-inhomogeneous systems is very similar to the cocycle case. On the other hand, there are significant differences between random attractors in the sense of Definition \ref{def:ra} and strictly stationary random attractors. In particular, random attractors in the  sense of Definition \ref{def:ra} are not necessarily unique even if 
  $$ \{C \subseteq H|\ C \text{ compact}\} \subseteq \mcD, $$
in contrast to strictly stationary random attractors (cf.\ \cite{C99}). Similarly, the random attractors considered here are not necessarily connected even if the state space $H$ is. 

The following theorem providing sufficient conditions for the existence of random attractors can be found in \cite{G11d}. Let $o \in H$ be an arbitrary point in $H$.
\begin{theorem}[Existence of Random Attractors]\label{thm:suff_cond_attr}
  Let $S(t,s;\o)$ be a continuous, $\mcD$-asymptotically compact stochastic flow and let $K$ be the corresponding $\mcD$-attracting family of compact subsets of $H$. Then
  $$\mcA(t,\o) := 
    \begin{cases}
      \overline{\bigcup_{D \in\mcD} \O(D,t;\o)}        &, \text{ if } \o \in \O_0 \\
       \{o\}                                           &, \text{ otherwise}
    \end{cases}$$
  defines a random $\mcD$-attractor for $S(t,s;\o)$ and $\mcA(t,\o) \subseteq K(t,\o) \cap \O(K,t;\o)$ for all $\o \in \O_0$ (where $\O_0$ is as in Definition \ref{def:abs}). 
 
  Let now $s \mapsto S(t,s;\o)x$ be right-continuous locally uniformly in $x$ and $S(t,s;\o)$ be measurable. Further assume that there is a countable family $\mcD_0 \subseteq \mcD$ consisting of right lower-semicontinuous random closed sets such that for each $D \in \mcD$, $\o \in \O$ there is a $D_0 \in \mcD_0$ satisfying $D(t,\o) \subseteq D_0(t,\o)$ for all $t \in \R$ small enough. Then $\mcA$ is a random closed set.

  If in addition $S(t,s;\o)$ is a cocycle and $\mcD_0$ consists of strictly stationary sets, then $\mcA$ is strictly stationary.
\end{theorem}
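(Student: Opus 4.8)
The plan is to establish, on the full-measure set $\O_0$ of Definition~\ref{def:abs}, the three defining properties of a random $\mcD$-attractor using the pointwise description of the $\O$-limit sets recalled above, and then to treat measurability and strict stationarity separately. \textbf{Compactness, nonemptiness and the inclusion $\mcA(t,\o)\subseteq K(t,\o)\cap\O(K,t;\o)$.} First I would note that for every $D\in\mcD$ one has $\O(D,t;\o)\subseteq K(t,\o)$: if $S(t,s_n;\o)x_n\to x$ with $x_n\in D(s_n,\o)$ and $s_n\to-\infty$, then $d(S(t,s_n;\o)x_n,K(t,\o))\le d(S(t,s_n;\o)D(s_n,\o),K(t,\o))\to 0$ by $\mcD$-attraction, and $K(t,\o)$ is compact, hence closed, so $x\in K(t,\o)$. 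Thus $\mcA(t,\o)\subseteq K(t,\o)$ is a closed subset of a compact set, hence compact, and it is nonempty because applying attraction and compactness of $K(t,\o)$ to a fixed $D\in\mcD$ with nonempty fibres produces a convergent subsequence whose limit lies in $\O(D,t;\o)$. For $\mcA(t,\o)\subseteq\O(K,t;\o)$ I would, given $x\in\O(D,t;\o)$ and $r<t$, use the flow property to write $S(t,s_n;\o)x_n=S(t,r;\o)\bigl(S(r,s_n;\o)x_n\bigr)$ for $n$ large, extract via $\mcD$-attraction at time $r$ and compactness of $K(r,\o)$ a subsequence with $S(r,s_n;\o)x_n\to z_r\in K(r,\o)$, and invoke continuity of $S(t,r;\o)$ to get $x=S(t,r;\o)z_r$; letting $r\to-\infty$ displays $x$ as a limit of such points, i.e.\ $x\in\O(K,t;\o)$.

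\textbf{Attraction and invariance.} Attraction of $\mcA$ is the usual contradiction argument: a sequence $S(t,s_n;\o)x_n$ with $x_n\in D(s_n,\o)$, $s_n\to-\infty$ staying at distance $\ge\varepsilon$ from $\mcA(t,\o)$ would, by attraction of $K$ and compactness of $K(t,\o)$, admit a subsequence converging to a point of $\O(D,t;\o)\subseteq\mcA(t,\o)$. For invariance fix $s\le t$. The inclusion $S(t,s;\o)\mcA(s,\o)\subseteq\mcA(t,\o)$ follows since $S(s,\tau_n;\o)x_n\to x$ implies $S(t,\tau_n;\o)x_n=S(t,s;\o)S(s,\tau_n;\o)x_n\to S(t,s;\o)x$ by continuity of $S(t,s;\o)$, so $S(t,s;\o)x\in\O(D,t;\o)$, and this passes to the closure defining $\mcA(s,\o)$ because $S(t,s;\o)$ is continuous and $\mcA(t,\o)$ closed. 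For the reverse inclusion, if $S(t,\tau_n;\o)x_n\to y$, then $z_n:=S(s,\tau_n;\o)x_n$ has vanishing distance to the compact set $K(s,\o)$, so a subsequence converges to some $x\in\O(D,s;\o)\subseteq\mcA(s,\o)$ with $S(t,s;\o)x=\lim S(t,s;\o)z_n=\lim S(t,\tau_n;\o)x_n=y$; the general case $y\in\mcA(t,\o)$ follows by approximating $y$ and using compactness of $\mcA(s,\o)$. This proves $\mcA$ is a random $\mcD$-attractor.

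\textbf{Measurability.} For the second assertion I would first replace the (possibly uncountable) union over $\mcD$ by the countable one over $\mcD_0$: since $\O(D,t;\o)$ depends only on the fibres $D(\tau,\o)$ for $\tau\to-\infty$, the domination hypothesis $D(\tau,\o)\subseteq D_0(\tau,\o)$ for $\tau$ small yields $\O(D,t;\o)\subseteq\O(D_0,t;\o)$, so $\mcA(t,\o)=\overline{\bigcup_{D_0\in\mcD_0}\O(D_0,t;\o)}$ on $\O_0$. Because $d(x,\overline{\bigcup_n C_n})=\inf_n d(x,C_n)$ it suffices to make $\o\mapsto d(x,\O(D_0,t;\o))$ measurable for fixed $D_0$. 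Writing $\O(D_0,t;\o)=\bigcap_{r<t}\overline{\bigcup_{\tau<r}S(t,\tau;\o)D_0(\tau,\o)}$, monotonicity in $r$ lets me restrict to a sequence $r_n\downarrow-\infty$ and reduces matters to a fixed $r<t$; here right lower-semicontinuity of $D_0$ together with right-continuity of $\tau\mapsto S(t,\tau;\o)x$ locally uniformly in $x$ makes $\tau\mapsto d(x,S(t,\tau;\o)D_0(\tau,\o))$ right upper-semicontinuous, whence $d\bigl(x,\overline{\bigcup_{\tau<r}S(t,\tau;\o)D_0(\tau,\o)}\bigr)=\inf_{\tau\in\mathbb Q,\ \tau<r}d(x,S(t,\tau;\o)D_0(\tau,\o))$. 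Finally, for fixed $t,\tau$, a Castaing representation $\{y_k(\cdot)\}$ of the random closed set $D_0(\tau,\cdot)$ and the joint measurability of $(x,\o)\mapsto S(t,\tau;\o)x$ (separate measurability plus continuity) give $d(x,S(t,\tau;\o)D_0(\tau,\o))=\inf_k d(x,S(t,\tau;\o)y_k(\o))$, a countable infimum of measurable maps. Assembling the countably many infima and suprema, and using that $\O_0$ may be chosen measurable with $\mcA\equiv\{o\}$ off it, shows $\mcA$ is a random closed set.

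\textbf{Strict stationarity, and the expected obstacle.} For the third assertion the cocycle identity $S(t,s;\o)x=S(t-s,0;\t_s\o)x$ rewrites as $S(t,s;\o)=S(0,s-t;\t_t\o)$, while strict stationarity of $D_0$ gives $D_0(s,\o)=D_0(0,\t_s\o)=D_0(s-t,\t_t\o)$; substituting $\tau\mapsto\tau-t$ in the characterisation of $\O(D_0,t;\o)$ then produces $\O(D_0,t;\o)=\O(D_0,0;\t_t\o)$ term by term, and together with $\mcA(t,\o)=\overline{\bigcup_{D_0\in\mcD_0}\O(D_0,t;\o)}$ this gives $\mcA(t,\o)=\mcA(0,\t_t\o)$ on a $\t$-invariant full-measure set (available since $\O_0$ has full measure and each $\t_t$ preserves $\P$), with both sides equal to $\{o\}$ elsewhere. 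I expect the main obstacle to be the measurability step: the delicate points there are justifying the successive reductions to countable index sets precisely from the right-semicontinuity hypotheses on $D_0$ and on $s\mapsto S(t,s;\o)x$, and proving measurability of $\o\mapsto d(x,S(t,\tau;\o)D_0(\tau,\o))$ via a measurable-selection (Castaing) argument combined with joint measurability of the flow.
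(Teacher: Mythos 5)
Your argument for the first two parts (compactness and nonemptiness of $\mcA(t,\o)$, the inclusion $\mcA(t,\o)\subseteq K(t,\o)\cap\O(K,t;\o)$, attraction by contradiction, and both inclusions in the invariance) is correct and is the standard route; note the paper itself does not reprove this theorem but cites \cite{G11d}. There are, however, two places in the later parts where the justification you give does not actually carry the step.

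First, in the measurability argument you write $\O(D_0,t;\o)=\bigcap_{r<t}\overline{\bigcup_{\tau<r}S(t,\tau;\o)D_0(\tau,\o)}$ and claim that ``monotonicity in $r$'' reduces the computation of $d\bigl(x,\O(D_0,t;\o)\bigr)$ to a fixed $r$. Monotonicity only gives $\sup_n d\bigl(x,F_{r_n}(\o)\bigr)\le d\bigl(x,\bigcap_n F_{r_n}(\o)\bigr)$, and for nested closed sets in an infinite-dimensional $H$ the inequality can be strict: in $\ell^2$ take $F_n=\{e_k:\ k\ge n\}\cup\{10e_1\}$, so $d(0,F_n)=1$ for all $n$ while $d\bigl(0,\bigcap_n F_n\bigr)=10$. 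The identity you need does hold on $\O_0$, but only via the $\mcD$-attraction by the compact set $K(t,\o)$: choosing near-minimizers $y_n\in F_{r_n}(\o)$ one has $d(y_n,K(t,\o))\to 0$, so a subsequence converges, and its limit lies in every $F_{r_m}(\o)$, whence $d\bigl(x,\O(D_0,t;\o)\bigr)=\lim_n d\bigl(x,F_{r_n}(\o)\bigr)$. This is exactly the one delicate identity in the measurability proof, and as written it is unsupported; it is fixable with the tools you already use, but the appeal to monotonicity alone would fail. The remaining reductions (rational $\tau$ via right upper-semicontinuity from right lower-semicontinuity of $D_0$ and locally uniform right-continuity of $s\mapsto S(t,s;\o)x$, and the Castaing representation combined with joint measurability of $(x,\o)\mapsto S(t,\tau;\o)x$) are fine.

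Second, in the stationarity step the parenthetical claim that a $\t$-invariant full-measure subset of $\O_0$ is ``available since $\O_0$ has full measure and each $\t_t$ preserves $\P$'' is not a valid argument: $\bigcup_{t\in\R}\t_t(\O\setminus\O_0)$ is an uncountable union of null sets and can even be all of $\O$, so measure preservation alone produces nothing. The cleaner observation, which your computation already contains, is that the identities $S(t,\tau;\o)=S(0,\tau-t;\t_t\o)$ and $D_0(\tau,\o)=D_0(\tau-t,\t_t\o)$ are pathwise, so $\O(D_0,t;\o)=\O(D_0,0;\t_t\o)$ and hence $\overline{\bigcup_{D_0\in\mcD_0}\O(D_0,t;\o)}=\overline{\bigcup_{D_0\in\mcD_0}\O(D_0,0;\t_t\o)}$ for \emph{every} $\o$, with no exceptional set; the only issue is the set where $\mcA$ was replaced by $\{o\}$. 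Strict stationarity of $\mcA$ as defined therefore requires either that $\O_0$ be chosen $\t$-invariant (which it is in the paper's applications, where it comes from Birkhoff's theorem) or that one pass to the indistinguishable version given by the omega-limit formula; your proof should say this explicitly rather than invoking an unavailable invariant full-measure set.
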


We now recall the notion of (stationary) conjugation mappings and conjugated stochastic flows (cf.\ \cite{K01,IL01}).

\begin{definition}
  Let $(H,d),(\td H,\td d)$ be two metric spaces.
  \begin{enumerate}
   \item A family of homeomorphisms $\mcT = \{T(t,\o): H \to \td H\}_{t \in \R, \o \in \O}$ such that the maps $\o \mapsto T(t,\o)x,T^{-1}(t,\o)y$ are measurable for all $t \in \R, x \in H, y \in \td H$, is called a conjugation mapping. $\mcT$ is called stationary if $T(t,\o)=T(0,\t_t\o)$ for all $t \in \R$, $\o \in \O$. In this case we set $T(\o) := T(0,\o)$.
   \item Let $Z(t,s;\o), S(t,s;\o)$ be stochastic flows.  $Z(t,s;\o)$ and $S(t,s;\o)$ are said to be (stationary) conjugated, if there is a (stationary) conjugation mapping $\mcT$ such that
     \[ S(t,s;\o) = T(t,\o) \circ Z(t,s;\o) \circ T^{-1}(s,\o). \]
  \end{enumerate} 
\end{definition}

An easy calculation shows that stationary conjugation mappings preserve the stochastic flow and cocycle property:

\begin{proposition}\label{prop:def_conj_flow}
  Let $\mcT$ be a conjugation mapping and $Z(t,s;\o)$ be a (measurable and continuous) stochastic flow. Then
  \begin{equation*}\label{eqn:def_transformed}
    S(t,s;\o) := T(t,\o) \circ Z(t,s;\o) \circ T^{-1}(s,\o)
  \end{equation*}   
  defines a conjugated (measurable and continuous) stochastic flow. If $\mcT$ is stationary and $Z(t,s;\o)$ is a cocycle then $S(t,s;\o)$ is a cocycle. 
\end{proposition}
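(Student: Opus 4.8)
The plan is to verify the stochastic-flow axioms for $S$ directly from the defining identity $S(t,s;\o)=T(t,\o)\circ Z(t,s;\o)\circ T^{-1}(s,\o)$, treating in turn the algebraic flow property, continuity, measurability, and finally the cocycle claim; every step is a substitution except one application of a joint-measurability lemma.

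First, since $T(s,\o)$ is a homeomorphism $H\to\td H$ with inverse $T^{-1}(s,\o)$, and $Z(s,s;\o)=id$, one reads off $S(s,s;\o)=T(s,\o)\circ T^{-1}(s,\o)=id$. For $t\ge r\ge s$, the composition $S(t,r;\o)\circ S(r,s;\o)$ contains the interior factor $T^{-1}(r,\o)\circ T(r,\o)=id$; cancelling it and invoking the flow property (ii) of $Z$ gives $S(t,r;\o)\circ S(r,s;\o)=T(t,\o)\circ Z(t,s;\o)\circ T^{-1}(s,\o)=S(t,s;\o)$, a one-line computation. Continuity of $x\mapsto S(t,s;\o)x$ is then immediate, being the composition of the continuous maps $T^{-1}(s,\o)$, $Z(t,s;\o)$ and $T(t,\o)$.

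The only point that needs a little care, and the one I expect to be the mild obstacle, is measurability. Fix $t\ge s$ and $x\in\td H$. By definition of a conjugation mapping, $\o\mapsto T^{-1}(s,\o)x\in H$ is measurable; to compose this with $Z(t,s;\cdot)$ and then with $T(t,\cdot)$ I would first upgrade the separate measurability in $\o$ and continuity in the space variable of these two families to joint measurability on $\O\times H$, using the standard fact that a Carathéodory map defined on the product of a measurable space with a separable metric space is product-measurable. Separability of $H$, which is part of the standing assumptions, is exactly what makes this applicable. Composing the three resulting measurable maps shows that $\o\mapsto S(t,s;\o)x$ is measurable, so $S$ is a measurable and continuous stochastic flow, conjugated to $Z$ via $\mcT$ by construction.

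For the cocycle assertion, suppose $\mcT$ is stationary and $Z$ is a cocycle. Stationarity gives $T(t-s,\t_s\o)=T(0,\t_{t-s}\t_s\o)=T(0,\t_t\o)=T(t,\o)$ and, applied at time $0$, $T^{-1}(0,\t_s\o)=T^{-1}(s,\o)$, while the cocycle property of $Z$ gives $Z(t-s,0;\t_s\o)=Z(t,s;\o)$. Substituting into the definition yields $S(t-s,0;\t_s\o)=T(t,\o)\circ Z(t,s;\o)\circ T^{-1}(s,\o)=S(t,s;\o)$, which is the cocycle property (v) for $S$, and this completes the proof.
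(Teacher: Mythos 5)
Your proof is correct and follows exactly the direct verification the paper leaves implicit when it calls this ``an easy calculation'': cancel $T^{-1}(r,\o)\circ T(r,\o)$ for the flow property, compose continuous maps for continuity, and use stationarity of $\mcT$ together with the cocycle property of $Z$ for the final claim. Your treatment of measurability, upgrading the separate measurability in $\o$ and continuity in $x$ of $Z(t,s;\cdot)$ and $T(t,\cdot)$ to joint measurability via the Carath\'eodory argument (using separability of $H$), is a correct and appropriately careful way to fill in the one detail the paper does not spell out.
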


Next, we note that the existence of a random attractor is preserved under conjugation. The proof of compactness, invariance and attraction for the conjugated attractor is immediate. Its measurability follows from the measurable selection Theorem \cite[Theorem III.9]{CV77}.

\begin{theorem}\label{thm:conj_attractor}
  Let $S(t,s;\o), Z(t,s;\o)$ be stochastic flows conjugated by a conjugation mapping $\mcT$ consisting of uniformly continuous mappings $T(t,\o): H \to H$. Assume that there is a $\tilde{\mcD}$-attractor $\tilde{\mcA}$ for $Z(t,s;\o)$ and let 
    $$\mcD := \big\{\{T(t,\o)\td D(t,\o)\}_{t \in \R,\o\in\O}|\ \td D \in \td\mcD\big\}.$$
  Then $\mcA(t,\o) := T(t,\o)\tilde{\mcA}(t,\o)$ is a random $\mcD$-attractor for $S(t,s;\o)$. If $\tilde{\mcA}$ is measurable then so is $\mcA$.
\end{theorem}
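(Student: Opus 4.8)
The plan is to verify the three defining properties of a random attractor (Definition \ref{def:ra}) for the candidate family $\mcA(t,\o) := T(t,\o)\tilde{\mcA}(t,\o)$, transporting each property from $\tilde{\mcA}$ through the conjugation. First I would observe that for each fixed $t,\o$ the map $T(t,\o): H \to H$ is a homeomorphism, hence maps the nonempty compact set $\tilde{\mcA}(t,\o)$ onto a nonempty compact set; this gives property (i). For invariance (iii), I would use the conjugation identity $S(t,s;\o) = T(t,\o)\circ Z(t,s;\o)\circ T^{-1}(s,\o)$ together with the $Z$-invariance $Z(t,s;\o)\tilde{\mcA}(s,\o) = \tilde{\mcA}(t,\o)$: applying $T(t,\o)$ to both sides of the latter and inserting $T^{-1}(s,\o)T(s,\o)=\mathrm{id}$ yields
\[
  S(t,s;\o)\mcA(s,\o) = T(t,\o)Z(t,s;\o)T^{-1}(s,\o)T(s,\o)\tilde{\mcA}(s,\o) = T(t,\o)Z(t,s;\o)\tilde{\mcA}(s,\o) = T(t,\o)\tilde{\mcA}(t,\o) = \mcA(t,\o).
\]

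For the attraction property (ii), fix $D \in \mcD$, say $D(t,\o) = T(t,\o)\tilde D(t,\o)$ with $\tilde D \in \tilde\mcD$. Then
\[
  S(t,s;\o)D(s,\o) = T(t,\o)Z(t,s;\o)T^{-1}(s,\o)T(s,\o)\tilde D(s,\o) = T(t,\o)\big(Z(t,s;\o)\tilde D(s,\o)\big),
\]
so I must show that $d\big(T(t,\o)(Z(t,s;\o)\tilde D(s,\o)),\, T(t,\o)\tilde{\mcA}(t,\o)\big) \to 0$ as $s \to -\infty$, knowing that $d(Z(t,s;\o)\tilde D(s,\o), \tilde{\mcA}(t,\o)) \to 0$. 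This is exactly where the hypothesis that $T(t,\o)$ is \emph{uniformly} continuous is used: given $\eps > 0$ choose $\delta > 0$ with $\td d(x,y) < \delta \Rightarrow d(T(t,\o)x, T(t,\o)y) < \eps$ (here I am writing the metrics loosely since the statement takes $H = \td H$), and then for $s$ small enough every point of $Z(t,s;\o)\tilde D(s,\o)$ is within $\delta$ of $\tilde{\mcA}(t,\o)$, hence its image is within $\eps$ of $\mcA(t,\o)$; taking the supremum over the set preserves the bound. Plain continuity would not suffice because the set $Z(t,s;\o)\tilde D(s,\o)$ need not be relatively compact uniformly in $s$, so the modulus of continuity must be controlled uniformly over $H$.

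Finally, for the measurability claim, suppose $\tilde{\mcA}$ is measurable, i.e.\ each $\tilde{\mcA}(t,\o)$ is $\P$-a.s.\ closed and $\o \mapsto \td d(x,\tilde{\mcA}(t,\o))$ is measurable for every $x \in H$, $t \in \R$. Since $T(t,\o)$ is a homeomorphism, $\mcA(t,\o) = T(t,\o)\tilde{\mcA}(t,\o)$ is $\P$-a.s.\ closed, so it remains to check measurability of $\o \mapsto d(x,\mcA(t,\o))$. The clean way is to invoke the measurable selection theorem: since $\tilde{\mcA}(t,\cdot)$ is a random closed set, by \cite[Theorem III.9]{CV77} there is a sequence of measurable selections $\o \mapsto \xi_n(t,\o) \in \tilde{\mcA}(t,\o)$ that is dense in $\tilde{\mcA}(t,\o)$ for each $\o$; then $\o \mapsto T(t,\o)\xi_n(t,\o)$ is measurable (composition of the jointly measurable-in-$\o$ map $T$ with a measurable selection, using continuity of $T(t,\o)$ in the spatial variable), the family $\{T(t,\o)\xi_n(t,\o)\}_n$ is dense in $\mcA(t,\o)$ by continuity of $T(t,\o)$, and therefore
\[
  d(x,\mcA(t,\o)) = \inf_{n} d(x, T(t,\o)\xi_n(t,\o))
\]
is measurable in $\o$ as a countable infimum of measurable functions. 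The main obstacle is the attraction step: getting the Hausdorff semidistance to pass through $T(t,\o)$ genuinely requires the uniform-continuity hypothesis and a careful $\eps$--$\delta$ argument rather than a one-line appeal to continuity; the measurability step is routine once the selection theorem is brought in, and compactness and invariance are immediate.
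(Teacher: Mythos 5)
Your proof is correct and follows essentially the same route as the paper, which only sketches the argument: compactness and invariance are transported directly through the conjugation, attraction uses the uniform continuity of $T(t,\o)$, and measurability is obtained from the Castaing-type measurable selection theorem \cite[Theorem III.9]{CV77}. Your $\eps$--$\delta$ argument for the attraction step and the dense-selection argument for measurability fill in precisely the details the paper leaves implicit.
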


We will require the following strong notion of stationarity:
\begin{definition}\label{def:strict_stat}
  A map $X: \R \times \O \to H$ is said to satisfy (crude) strict stationarity, if 
          \[ X(t,\o) = X(0,\t_t\o),\]
  for all $\o \in \O$ and $t \in \R$ (for all $t \in \R$, $\P$-a.s., where the zero-set may depend on $t$ resp.).
\end{definition}

Since $\P$ is $\t$-invariant, crude strict stationarity implies stationarity of the law. Constructions of stationary conjugation mappings are based on strictly stationary processes. Since most constructions in stochastic analysis, like stochastic integrals and solutions to stochastic differential equations are based on limits in $L^2(\O)$ or limits in probability, usually only crude strict stationarity is satisfied. The step from stochastic analysis to RDS thus requires the selection of indistinguishable strictly stationary versions. The following Proposition is an easy adaption of \cite[Proposition 2.8]{L01}.

\begin{proposition}\label{prop:stat_perfect}
  Let $V \subseteq H$ and $X: \R \times \O \to H$ be a process satisfying crude strict stationarity. Assume that $X_\cdot \in C(\R;H) \cap L^\a_{loc}(\R;V)$ for some $\a \ge 1$, $\P$-almost surely. Then there exists a process $\tilde{X}: \R \times \O \to H$ such that
  \begin{enumerate}
   \item $\tilde{X}_\cdot(\o) \in C(\R;H) \cap L^\a_{loc}(\R;V)$ for all $\o \in \O$.
   \item $X$, $\tilde{X}$ are indistinguishable, i.e.
          \[ \P[ X_t \ne \tilde{X}_t, \text{ for some } t \in \R  ] = 0, \]
         with a $\theta$-invariant exceptional set.
   \item $\tilde{X}$ is strictly stationary.
  \end{enumerate}
\end{proposition}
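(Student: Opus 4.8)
The plan is to upgrade the crude strict stationarity of $X$ to a genuine (everywhere‑in‑$\o$, everywhere‑in‑$t$) one by exploiting the path continuity — the classical ``perfection'' mechanism for cocycles. First I would note that, by the $\P$‑a.s.\ continuity of $t\mapsto X_t(\o)$, the process $X$ may be taken jointly measurable, and put $\O_1:=\{\o:\ X_\cdot(\o)\in C(\R;H)\cap L^\a_{loc}(\R;V)\}$, a set of full $\P$‑measure. The map $(t,\o)\mapsto X(0,\t_t\o)$ is jointly measurable and, by crude strict stationarity, equals $X(t,\cdot)$ $\P$‑a.s.\ for each fixed $t$; hence Fubini's theorem (on $[-n,n]\times\O$, then $n\to\infty$) yields a full‑measure set $\O_2$ such that for every $\o\in\O_2$ one has $X(t,\o)=X(0,\t_t\o)$ for Lebesgue‑a.e.\ $t\in\R$. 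Put $\O_3:=\O_1\cap\O_2$.

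The heart of the matter, and the step I expect to be the main obstacle, is the following rigidity lemma: \emph{if $\o\in\O_3$ and $\t_r\o\in\O_3$ for some $r\in\R$, then $X(t+r,\o)=X(t,\t_r\o)$ for all $t\in\R$.} Indeed, by the choice of $\O_2$ both $t\mapsto X(t+r,\o)$ and $t\mapsto X(t,\t_r\o)$ agree with $t\mapsto X(0,\t_{t+r}\o)$ for Lebesgue‑a.e.\ $t$, while by the choice of $\O_1$ both are continuous, and two continuous functions coinciding a.e.\ coincide everywhere. This is the precise device promoting an almost‑everywhere‑in‑time identity to an everywhere identity, and it is the only point where continuity is really used. (Equivalently, one may first intersect $\O_3$ over the countably many rational shifts to obtain a $\mathbb{Q}$‑shift‑invariant set of full measure, then extend to all real shifts by continuity; the statement above packages both moves.)

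With this in hand I would construct $\tilde{X}$. Set $\tilde{\O}:=\{\o:\ \t_s\o\in\O_3\ \text{for Lebesgue‑a.e.\ }s\in\R\}$; Fubini gives $\P(\tilde{\O})=1$, and $\tilde{\O}$ is $\t$‑invariant because $\{s:\t_s(\t_u\o)\in\O_3\}$ is merely a translate of $\{s:\t_s\o\in\O_3\}$. For $\o\in\tilde{\O}$ the set $G(\o):=\{s:\t_s\o\in\O_3\}$ has full measure, so it is nonempty; choosing $s_0=s_0(\o)\in G(\o)$ by a measurable selection, I would define $\tilde{X}(t,\o):=X(t-s_0(\o),\t_{s_0(\o)}\o)$ for $\o\in\tilde{\O}$, and $\tilde{X}(t,\o):=v_0$ for $\o\notin\tilde{\O}$, with $v_0\in V$ fixed. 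The rigidity lemma, applied to $\t_{s_0}\o$ and $\t_{s_1}\o$ for $s_0,s_1\in G(\o)$, shows that $\tilde{X}(t,\o)$ is independent of the choice of $s_0$; equivalently $\tilde{X}(\cdot,\o)$ is the unique continuous, $L^\a_{loc}(\R;V)$‑valued version of $t\mapsto X(0,\t_t\o)$. Now (i) is immediate (on $\tilde{\O}$, $\tilde{X}(\cdot,\o)$ is a time‑translate of $X(\cdot,\t_{s_0}\o)\in C(\R;H)\cap L^\a_{loc}(\R;V)$, and off $\tilde{\O}$ it is constant in $V$), and $\tilde{X}$ is jointly measurable as a composition of measurable maps. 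For (iii): independence of $s_0$ gives, for $\o\in\tilde{\O}$ and $u\in\R$ (noting $s_0(\t_u\o)+u\in G(\o)$), $\tilde{X}(t,\t_u\o)=X(t-s_0(\t_u\o),\t_{s_0(\t_u\o)+u}\o)=\tilde{X}(t+u,\o)$, and the case $\o\notin\tilde{\O}$ is trivial; taking $t=0$ yields $\tilde{X}(u,\o)=\tilde{X}(0,\t_u\o)$ for all $\o\in\O$, $u\in\R$.

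Finally (ii): for $\o\in\tilde{\O}\cap\O_3$ one may take $s_0(\o)=0$, so $\tilde{X}(\cdot,\o)=X(\cdot,\o)$ there; hence $\{\o:\ X_t(\o)\ne\tilde{X}_t(\o)\ \text{for some}\ t\}\subseteq(\tilde{\O}\cap\O_3)^c$, which is $\P$‑null, and — being contained, up to a $\P$‑null set, in the $\t$‑invariant null set $\tilde{\O}^c$ — furnishes the asserted $\t$‑invariant exceptional set. Beyond the rigidity lemma, everything is routine bookkeeping (chiefly the measurability of $\tilde{\O}$ and of the selection $s_0$), and the argument is a straightforward adaptation of \cite[Proposition 2.8]{L01}.
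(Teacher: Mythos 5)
The paper does not actually prove this proposition; it only remarks that it is ``an easy adaption of \cite[Proposition 2.8]{L01}''. Your reconstruction is the standard perfection argument (Fubini to get an a.e.-in-$t$ identity, continuity to upgrade it on orbits, the $\t$-invariant set $\tilde\O$ of $\o$ whose orbit meets $\O_3$ at a.e.\ time, and independence of the base point), and it correctly yields (i), (iii) and plain indistinguishability. One minor caveat: since $G(\o)$ is only a set of full Lebesgue measure (not closed), a ``measurable selection'' $s_0(\o)$ is only guaranteed to be measurable for the $\P$-completion of $\mcF$; this matters because the paper later wants an $\mcF$-measurable version. You can avoid selection entirely: since $X(t-s,\t_s\o)$ is constant in $s\in G(\o)$, define $\tilde X(t,\o):=\lim_{n\to\infty} n\int_0^{1/n}X(t-s,\t_s\o)\,ds$ on $\tilde\O$ (the integrand is a.e.\ constant), which is jointly measurable by construction.

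The genuine gap is your treatment of the clause ``with a $\theta$-invariant exceptional set''. What you prove is $\{\o:\ X_t(\o)\ne\tilde X_t(\o)\ \text{for some }t\}\subseteq\tilde\O^c\cup\O_3^c$; only $\tilde\O^c$ is $\t$-invariant, and the phrase ``contained, up to a $\P$-null set, in the $\t$-invariant null set $\tilde\O^c$'' is vacuous -- every null set is contained in every set up to a null set -- so it does not produce an invariant null set containing the difference set. Moreover, under the literal strong reading (a $\t$-invariant null set $N$ with $X_t(\o)=\tilde X_t(\o)$ for all $t$ whenever $\o\notin N$) no proof can succeed: on such an invariant full-measure set $X$ itself would be perfectly stationary, since $X(t,\o)=\tilde X(t,\o)=\tilde X(0,\t_t\o)=X(0,\t_t\o)$ there, and this does not follow from crude stationarity. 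Concretely, take $V=H=\R$, $h$ continuous with $h(0)\ne 0$, and $X(t,\o):=\mathbbm{1}_{\{\b_1(\o)=0\}}h(t)$: for each fixed $t$ both $X(t,\cdot)$ and $X(0,\t_t\cdot)$ vanish $\P$-a.s., so $X$ is crudely strictly stationary with (everywhere) continuous paths; but for $\P$-a.e.\ $\o$ one has $\b_1(\o)\ne0$ while there exists $t$ with $\b_{1+t}(\o)=\b_t(\o)$, so $X(t,\o)=0\ne h(0)=X(0,\t_t\o)$, i.e.\ perfect stationarity of $X$ fails $\P$-a.s.\ and no invariant null $N$ as above can exist. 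So the honest conclusion -- which is exactly what your construction delivers and what the application in the paper needs -- is: $\tilde X$ is strictly stationary for all $\o$, has the stated path regularity for all $\o$, equals $X$ identically in $t$ on the (generally non-invariant) full-measure set $\tilde\O\cap\O_3$, and there is the $\t$-invariant full-measure set $\tilde\O$ on which $\tilde X(\cdot,\o)$ is the unique continuous version of $t\mapsto X(0,\t_t\o)$. You should state this weaker form explicitly rather than claim the invariant exceptional set as written.
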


\section{Setup and Main Results}\label{sec:main_result}

As mentioned in the introduction we will consider SPDE of the form 
\begin{equation}\label{eqn:spde2}
  dX_t = A(t,X_t)dt + dW_t + \mu X_t \circ d\b_t,
\end{equation}
where $\mu \in \R$, $W: \R \times \O \to H$ is a trace class two-sided Wiener process in $H$ with covariance $Q$ and $\b: \R \times \O \to \R$ is an independent two-sided real valued Brownian motion. The drift operator $A$ will be specified below. Note that $A$ may depend on a random parameter $\o \in \O$ which for simplicity we suppressed in the notation of \eqref{eqn:spde2}.

In this section we will state the main results and their assumptions, while the proofs are postponed to Section 4.

Let $H$ be a separable Hilbert space and $V$ be a reflexive Banach space continuously and densely embedded in $H$. This yields the Gelfand triple
  \[ V \subseteq H \subseteq V^*. \]
In particular, there is a constant $\l > 0$ such that $\|v\|_H^2 \le \l \|v\|_V^2$.

We assume $(W,\b)$ to be given by their canonical realization on $\O := C(\R;H\times\R)$ with the canonical filtration $\{\mcF_t\}_{t\in\R}$ and Wiener shifts $\{\t_t\}_{t \in \R}$. Let $\P$ be the law of $(W,\b)$ on $H\times\R$. Then $(\O,\mcF,\{\mcF_t\}_{t\in \R},\{\t_t\}_{t\in \R},\P)$ is an ergodic metric dynamical system. We denote the completion of $(\O,\F,\{\mcF_t\}_{t \in \R},\P)$ by $(\O,\bar\F,\{\bar\mcF_t\}_{t \in \R},\P)$.

Assume that
  \[ A: \R \times V \times \O \to V^*, \]
is such that $A(\cdot,\cdot;\o): \R \times V \to V^*$ is $(\mcB(\R) \otimes \mcB(V),\mcB(V^*))$-measurable for each $\o \in \O$. We extend $A$ by $0$ to $\R \times H \times \O$. Further, we assume that there are pathwise right-continuous mappings $C: \R \times \O \to \R$, $c: \R \times \O \to \R_+\backslash\{0\}$, a function $f: \R \times \O \to \R$ with $f(\cdot,\o) \in L^1_{loc}(\R)$ and a constant $\alpha \ge 2$ such that
\begin{enumerate}
 \item [$(A1)$] (Hemicontinuity) The map
        $$ s \mapsto { }_{V^*} \< A(t,v_1+s v_2;\o),v \>_V$$
      is continuous on $\mathbb{R}$,
 \item [$(A2)$] (Monotonicity) 
      $$2{  }_{V^*}\<A(t,v_1;\o)-A(t,v_2;\o), v_1-v_2 \>_V \le C(t,\o) \|v_1-v_2\|_H^2, $$
 \item [$(A3)$] (Coercivity) 
      $$ 2{ }_{V^*}\<A(t,v;\o), v\>_V \le C(t,\o) \|v\|_H^2 - c(t,\o) \|v\|_V^\alpha + f(t,\o),$$
 \item[$(A4)$] (Growth) 
      $$ \|A(t,v;\o)\|_{V^*}^{\frac{\a}{\a-1}} \le C(t,\o) \|v\|_V^\alpha + f(t,\o),$$
\end{enumerate}
for all $v, v_1,v_2 \in V$, $t \in \R$ and $\o \in \O$.

\begin{remark}\label{rmk:reockner_setup}
  The usual assumptions from the variational approach to SPDE (cf.\ Appendix \ref{app:var_spde}) are slightly more restrictive then $(A1)$--$(A4)$. If we require in addition that $A$ is $\bar \mcF_t$-progressively measurable, $c,C$ are non-random, $f$ is $\bar\mcF_t$-adapted and $f \in L^1_{loc}(\R; L^1(\O))$ then \cite[Theorem 4.2.4]{PR07} implies the existence of a unique, $\bar\mcF_t$-adapted solution in the sense of Definition \ref{def:prob_soln} to \eqref{eqn:spde2}. Note that for us it will be crucial to work with the non-completed filtration $\mcF_t$.
\end{remark}

Throughout this paper we will work with the convention that $C,\td C: \R \times \O \to \R$, $c,\td c: \R \times \O \to \R\setminus\{0\}$ are generic pathwise right-continuous functions and $f,\td f$ are generic functions pathwise in $L^1_{loc}(\R)$, each of which is allowed to change from line to line.

\subsection{Strictly stationary solutions}

As outlined in the introduction, the construction of  strictly stationary solutions is a key ingredient for the construction of stochastic flows for $H$-valued noise and for the analysis of their long-time behavior. In this section we will consider strictly monotone SPDE of the form
\begin{equation}\label{eqn:stricly_mon_SPDE}
  dX_t = M(t,X_t)dt + B_t dW_t,
\end{equation}
where $M: \R \times V \times \O \to V^*$ and $B:  \R \times \O \to L_2(U,H)$ satisfy $(H1)$--$(H4)$ (cf.\ Appendix \ref{app:var_spde}) with respect to $(\O,\bar\mcF,\{\bar\mcF_t\}_{t \in \R})$ and $M$ is strongly monotone, i.e.\ there exists a $c > 0$ such that
  \begin{enumerate}
    \item [$(H2')$] (Strong Monotonicity)
        $$2{  }_{V^*}\<M(t,v_1;\o)-M(t,v_2;\o), v_1-v_2 \>_V \le -c \|v_1-v_2\|_V^\a,$$
    for all $v_1,v_2 \in V, t \in \R, \o \in \O$, where $\a$ is as in $(H3)$ (Appendix \ref{app:var_spde}). 
  \end{enumerate}
Let $X(t,s;\o)x$ denote the unique solution to \eqref{eqn:stricly_mon_SPDE} starting at time $s$ in $x \in H$, given by Theorem \ref{thm:var_ex}.
  
\begin{theorem}\label{thm:nonlinear_OU}[Strictly stationary solutions]\ \\
  Let $M, B$ as above. If $\a=2$ additionally assume that $t \mapsto \E f_t$ is exponentially integrable. Then 
  \begin{enumerate}
   \item There exists an $\bar\mcF_t$-adapted, $\mcF$-measurable process $u \in L^2(\O;C(\R;H)) \cap L^\a(\O;L^\a_{loc}(\R;V))$ such that
         \begin{equation*}
           \lim_{s \to -\infty} X(t,s;\cdot)x = u_t,
         \end{equation*}  
         in $L^2(\O;H)$ for all $t \in \R$, $x \in H$. 
   \item $u$ solves \eqref{eqn:stricly_mon_SPDE} in the following sense:
         \begin{equation}\label{eqn:all_time_spde}
           u_t = u_s + \int_s^t M(r,u_r)dr + \int_{s}^t B_r dW_r,\ \P \text{-a.s., }  \forall t \ge s.
         \end{equation} 
   \item If $M$, $B$ are strictly stationary, then $u$ can be chosen to be strictly stationary with continuous paths in $H$ and satisfying $u_\cdot(\o) \in L^\a_{loc}(\R;V)$, for all $\o \in \O$. 

      If moreover, $t \mapsto \E f_t$ is exponentially integrable, then $t \mapsto \|u_t(\o)\|_V^\a$ is $\P$-a.s.\ exponentially integrable: For each $\eta > 0$, $t \in \R$ there is a $C(\eta) > 0$ such that
      \begin{equation}\label{eqn:u_V_bound}
        \E \int_{-\infty}^t e^{\eta r}\|u_r\|_V^\a dr \le C(\eta) \int_{-\infty}^t e^{\eta r} (1+\E f_r) dr. 
      \end{equation}
  \end{enumerate}
\end{theorem}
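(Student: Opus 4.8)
\textbf{Proof proposal for Theorem \ref{thm:nonlinear_OU}.}

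The plan is to construct $u$ as the limit of solutions started ever further in the past and to show this limit is independent of the initial condition $x$, solves the equation on all of $\R$, and (under stationarity of $M,B$) can be perfected to a strictly stationary process. First, I would establish the \emph{contraction estimate}: for two solutions $X(t,s;\o)x$ and $X(t,s;\o)y$ of \eqref{eqn:stricly_mon_SPDE}, the difference $D_t := X(t,s;\o)x - X(t,s;\o)y$ solves a random PDE (the noise cancels, since the equation has additive noise), so by It\^o's formula and strong monotonicity $(H2')$,
\begin{equation*}
  \frac{d}{dt}\|D_t\|_H^2 = 2{}_{V^*}\<M(t,X_t^x)-M(t,X_t^y),D_t\>_V \le -c\|D_t\|_V^\a \le -c\l^{-\a/2}\|D_t\|_H^\a.
\end{equation*}
When $\a=2$ this gives exponential decay $\|D_t\|_H^2 \le e^{-c\l^{-1}(t-s)}\|x-y\|_H^2$; when $\a>2$ one solves the ODE comparison $y' \le -\kappa y^{\a/2}$, which yields decay of order $(t-s)^{-2/(\a-2)}$, \emph{uniformly in $x,y$} (the bound depends only on $t-s$, not on the starting point). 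This is exactly the mechanism that forces the limit, if it exists, to be independent of $x$.

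Next I would prove existence of the limit. Using the flow property $X(t,s;\o)x = X(t,r;\o)X(r,s;\o)x$ for $s\le r\le t$ and the contraction estimate applied at time $r$ with the two initial conditions $X(r,s;\o)x$ and $x$, one gets
\begin{equation*}
  \E\|X(t,s;\cdot)x - X(t,r;\cdot)x\|_H^2 \le \gamma(t-r)\,\E\|X(r,s;\cdot)x - x\|_H^2,
\end{equation*}
where $\gamma(t-r)\to 0$. The remaining task is an a priori bound on $\E\|X(r,s;\cdot)x\|_H^2$ that is uniform in $s\le r$: this is where coercivity $(H3)$ and the exponential integrability of $\E f$ enter. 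By It\^o's formula plus $(H3)$ one controls $e^{\delta r}\E\|X(r,s)x\|_H^2$ by $\|x\|_H^2 e^{\delta s}$ plus $\int_s^r e^{\delta \rho}(1+\E f_\rho)\,d\rho$ plus the quadratic variation term $\int_s^r e^{\delta\rho}\|B_\rho\|_{L_2(U,H)}^2\,d\rho$ (finite by $(H4)$); for $\a=2$ the exponential-integrability hypothesis on $\E f$ is precisely what makes this uniform, while for $\a>2$ the superlinear coercivity does the job. Hence $\{X(t,s;\cdot)x\}_{s\le t}$ is Cauchy in $L^2(\O;H)$ as $s\to-\infty$, defining $u_t$; the $L^\a(\O;L^\a_{loc}(\R;V))$ bound follows from integrating $(H3)$ and lower semicontinuity of norms under weak limits, and $\bar\mcF_t$-adaptedness is inherited. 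For part (ii), fix $s$; for $s' \le s$ the process $X(\cdot,s';\o)x$ satisfies \eqref{eqn:stricly_mon_SPDE} on $[s,\infty)$ with initial value $X(s,s';\o)x \to u_s$ in $L^2(\O;H)$, and passing to the limit in the variational (weak) formulation — using $(H4)$ for the drift term, It\^o isometry for the stochastic integral, and a monotonicity/Minty argument to identify the weak limit of $M(\cdot,X_\cdot)$ with $M(\cdot,u_\cdot)$ — yields \eqref{eqn:all_time_spde}.

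For part (iii), the key observation is that $X(t,s;\o)x = X(t-s,0;\t_s\o)x$ by uniqueness and the stationarity of $M,B$ (the coefficients are $\t$-equivariant), so $u_t(\o) = \lim_{s\to-\infty}X(t-s,0;\t_s\o)x = \lim_{\sigma\to\infty}X(\sigma,0;\t_{t-\sigma}\o)x$, which formally equals $u_0(\t_t\o)$; more carefully, $u_t(\o) = \lim_{s\to-\infty}X(t,s;\o)x$ and $u_0(\t_t\o) = \lim_{s\to-\infty}X(0,s;\t_t\o)x = \lim_{s\to-\infty}X(t,s+t;\o)x$ agree as $L^2(\O)$-limits, giving crude strict stationarity in the sense of Definition \ref{def:strict_stat}. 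Since $u \in C(\R;H)\cap L^\a_{loc}(\R;V)$ a.s.\ by part (i), Proposition \ref{prop:stat_perfect} produces an indistinguishable, genuinely strictly stationary version with the stated pathwise regularity. Finally, the exponential integrability of $r\mapsto\|u_r(\o)\|_V^\a$ and the bound \eqref{eqn:u_V_bound}: integrate It\^o's formula for $e^{\eta r}\|u_r\|_H^2$ against $(H3)$ over $(-\infty,t)$ — the boundary term at $-\infty$ vanishes because $e^{\eta s}\E\|u_s\|_H^2 \to 0$ (stationarity of $u$ forces $\E\|u_s\|_H^2$ to be constant in $s$, hence the exponential weight kills it), the martingale term has vanishing expectation, and the $\|B\|^2$ term is controlled by $(H4)$ — leaving $c\,\E\int_{-\infty}^t e^{\eta r}\|u_r\|_V^\a\,dr \le C\int_{-\infty}^t e^{\eta r}(1+\E f_r)\,dr$.

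\textbf{Main obstacle.} The delicate point is the uniform-in-$s$ a priori bound on $\E\|X(r,s;\cdot)x\|_H^2$ in the case $\a=2$: here coercivity alone only gives $C(t,\o)\|v\|_H^2$ control with no superlinear gain, so one genuinely needs the exponential integrability of $\E f$ together with a Gronwall argument with an exponential weight $e^{\delta r}$, and one must check that the constants $C,c$ being non-random (as in the hypotheses $(H1)$--$(H4)$) makes $\delta$ choosable uniformly. A secondary technical nuisance is the Minty-type identification of the nonlinear limit when passing $s\to-\infty$ in the weak formulation, and the bookkeeping needed to keep everything adapted to the \emph{non-completed} filtration $\mcF_t$ so that Proposition \ref{prop:stat_perfect} applies with a $\t$-invariant exceptional set.
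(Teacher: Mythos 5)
Your overall architecture is the same as the paper's: contraction of two solutions via the strong monotonicity $(H2')$ plus an ODE comparison lemma, a Cauchy argument as $s\to-\infty$ (with the limit independent of $x$), a Minty-type monotonicity argument to identify the drift in the limit equation, the shift identity $X(t,s;\o)x=X(t-s,0;\t_s\o)x$ together with Proposition \ref{prop:stat_perfect} for strict stationarity, and a weighted It\^o estimate for \eqref{eqn:u_V_bound}. However, there is one substantive gap in the case $\a=2$: you propose to obtain the uniform-in-$s$ a priori bound on $\E\|X(r,s;\cdot)x\|_H^2$ (and likewise the bound \eqref{eqn:u_V_bound}) from the coercivity $(H3)$ combined with an exponential weight. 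But $(H3)$ carries the term $+C\|v\|_H^2$ on the right-hand side with \emph{no} smallness assumption on $C$ in this theorem (the condition $C<\tfrac{c}{4\l}$ is imposed only later, in Theorem \ref{thm:superlinear_ra}). For $\a=2$ the coercive gain $-c\|v\|_V^2\le -\tfrac{c}{\l}\|v\|_H^2$ need not dominate $C\|v\|_H^2$, so your weighted Gronwall argument yields a bound growing like $e^{(C-c/\l)(r-s)}$ as $s\to-\infty$ rather than one uniform in $s$; the Cauchy estimate $e^{-\kappa(t-s_2)}\E\|X(s_2,s_1;\cdot)x-x\|_H^2$ then cannot be controlled. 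The missing ingredient is the genuine dissipativity inequality
\begin{equation*}
  2\,{}_{V^*}\langle M(t,v),v\rangle_V + \|B_t\|_{L_2(U,H)}^2 \le -\tfrac{c}{2}\|v\|_H^2 + C(1+f_t),
\end{equation*}
which follows from the strong monotonicity $(H2')$ together with the growth bound $(H4)$ applied to $M(t,0)$ (this is exactly \cite[Lemma 4.3.8]{PR07}, which the paper invokes). With this inequality the decay rate is strictly negative regardless of the constant in $(H3)$, and the exponential integrability of $\E f$ then gives the claimed uniform bound and, analogously, the absorption of the $(\eta+C)\|u_r\|_H^2$ term in the derivation of \eqref{eqn:u_V_bound}; for $\a>2$ your route is harmless since $C\|v\|_H^2$ can be absorbed into $c\|v\|_V^\a$ by Young's inequality, but for $\a=2$ the argument must be rerouted through $(H2')$.

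Two smaller points: the control of the noise term $\|B_t\|_{L_2(U,H)}^2$ comes from $(H3)$ (it is part of that hypothesis; here the noise is additive, so it is data), not from $(H4)$, which only bounds the drift; and in part (iii) the identity $X(t,s;\o)x=X(t-s,0;\t_s\o)x$ is not purely a consequence of uniqueness and equivariance of the coefficients — one needs the $\P$-a.s.\ shift property of the stochastic integral (the paper isolates this as Lemma \ref{lemma:integral_shift}), with the exceptional sets handled carefully before Proposition \ref{prop:stat_perfect} can be applied with a $\t$-invariant zero set. These are repairable, but the $\a=2$ dissipativity issue is the point where your proof as written would fail.
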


For a related result for semilinear SPDE we refer to \cite{CKS04}. In order to analyze the random differential equations obtained by stationary transformations based on the stationary solutions constructed above, we will need some growth properties for them. 

\begin{theorem}\label{thm:nonlinear_OU_prop}
  Let the assumptions of Theorem \ref{thm:nonlinear_OU} be satisfied with $M$, $B$ being strictly stationary, $k \in \N$ and $t \mapsto \E f_t^\frac{k-2+\a}{\a}$ c\`adl\`ag. If $\a = 2$ additionally assume $t \mapsto \E f_t^\frac{k}{2}$ to be exponentially integrable. Then
  \begin{enumerate}
   \item There exists a $\t$-invariant set $\O_0 \subseteq \O$ of full $\P$-measure such that for $\o \in \O_0$
          \begin{equation}\label{eqn:OU_bound}
            \frac{1}{t} \int_0^t \|u_r(\o)\|_H^k dr \to \E\|u_0\|_H^k,\quad t \to \pm\infty.
          \end{equation}
   \item $\|u_t(\o)\|_H^k$ growths sublinearly, i.e.
        \[ \lim_{t \to \pm\infty} \frac{\|u_t(\o)\|_H^k}{|t|} = 0.\]
  \end{enumerate}
\end{theorem}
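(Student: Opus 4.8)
The plan is to exploit strict stationarity of $u$ together with Birkhoff's pointwise ergodic theorem. Since $M$ and $B$ are strictly stationary, Theorem \ref{thm:nonlinear_OU}(iii) gives a strictly stationary version of $u$ with $u_t(\o) = u_0(\t_t\o)$ for all $\o \in \O$, $t \in \R$, and with $\P$-a.s.\ continuous paths in $H$. The first task is therefore to show that $g(\o) := \|u_0(\o)\|_H^k \in L^1(\O)$. This is where the moment assumptions on $f$ enter: from the a priori bound \eqref{eqn:u_V_bound} (or a variant of it at the level of the $H$-norm, obtained by applying Itô's formula to $\|u_t\|_H^2$ and iterating to get control of higher even powers) together with the exponential integrability of $t \mapsto \E f_t^{k/2}$ when $\a=2$, resp.\ the càdlàg assumption on $t \mapsto \E f_t^{(k-2+\a)/\a}$ when $\a > 2$, one derives $\E\|u_0\|_H^k < \infty$. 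Concretely, I would apply Itô to $\phi(\|u_t\|_H^2)$ with $\phi(x) = x^{k/2}$, use $(H2')$ / the coercivity built into $(H3)$ to dominate the drift, take expectations, and use the stationarity to conclude that the stationary value $\E\|u_0\|_H^k$ is finite and controlled by the stated moments of $f$. For $\a>2$ the superlinear coercivity term $\|u_r\|_V^\a$ gives extra room and one does not need exponential integrability, only local control — hence the weaker hypothesis in that case.

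Given $g \in L^1(\O)$, part (i) is immediate from Birkhoff's ergodic theorem applied to the ergodic metric dynamical system $(\O,\bar\F,\P,\{\t_t\})$: for a $\t$-invariant set $\O_0$ of full measure,
$$ \frac{1}{t}\int_0^t \|u_r(\o)\|_H^k\,dr = \frac{1}{t}\int_0^t g(\t_r\o)\,dr \to \E g = \E\|u_0\|_H^k, \qquad t \to +\infty, $$
and the analogous statement as $t \to -\infty$ follows by applying Birkhoff to the flow $\{\t_{-t}\}$ (or by the two-sided version of the ergodic theorem). One should check measurability of $r \mapsto g(\t_r\o)$, which follows from the joint measurability of $(t,\o)\mapsto u_t(\o)$ in the strictly stationary version and the measurability of the flow.

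Part (ii) is then a soft consequence of (i) combined with path continuity. The standard argument: if $\limsup_{t\to+\infty} \|u_t(\o)\|_H^k/|t| = \delta > 0$ along some sequence $t_n \to \infty$, then by continuity of $r \mapsto \|u_r(\o)\|_H^k$ there are intervals around each $t_n$ on which $\|u_r(\o)\|_H^k \ge \tfrac{\delta}{2} t_n$; however, the length of those intervals cannot shrink too fast, and integrating contradicts $\frac{1}{t}\int_0^t \|u_r\|_H^k\,dr \to \E\|u_0\|_H^k < \infty$. A cleaner route is to note that $\frac{1}{t}\int_{t}^{2t}\|u_r\|_H^k dr \to \E\|u_0\|_H^k$ as well, so $\|u_t\|_H^k$ cannot grow linearly; one makes this rigorous by a Borel–Cantelli / subsequence argument, again using continuity of paths to pass from the averaged bound to a pointwise one. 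I would carry this out on the $\t$-invariant full-measure set $\O_0$ from part (i), possibly shrinking it by a further null set to ensure path continuity holds for every $\o \in \O_0$; the result for $t \to -\infty$ is symmetric.

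The main obstacle is the integrability step $\E\|u_0\|_H^k < \infty$, i.e.\ propagating the hypotheses on moments of $f$ to a uniform-in-time bound on $\E\|u_t\|_H^k$ that survives the $s \to -\infty$ limit defining $u$. The delicate points are: controlling the higher power $k/2 > 1$ of $\|u_t\|_H^2$ via Itô without losing the stationary structure, and handling the $\a=2$ case where, unlike $\a>2$, there is no superlinear dissipation and one genuinely needs the exponential integrability of $t \mapsto \E f_t^{k/2}$ (paralleling the role it already plays in Theorem \ref{thm:nonlinear_OU}). Once this moment bound is in hand, the rest is a routine application of the ergodic theorem and an elementary continuity argument.
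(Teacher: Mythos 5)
Your skeleton for part (i) (reduce everything to $\E\|u_0\|_H^k<\infty$ and then invoke Birkhoff) is the same as the paper's, but the route you sketch to the moment bound is circular as stated: applying It\^o's formula to $\|u_t\|_H^k$ for the stationary solution itself, taking expectations and ``using stationarity to conclude finiteness'' presupposes that $t\mapsto\E\|u_t\|_H^k$ is finite (otherwise the expectation/differentiation step is meaningless), which is precisely what you are trying to prove. The paper avoids this by applying the It\^o formula for $\|\cdot\|_H^k$ (cf.\ the reference to \cite[Lemma 2.2]{LR10}) to the approximating solutions $X(t,s;\cdot)0$, which do have finite moments, obtaining the differential inequality $\frac{d}{dt}\E\|X(t,s;\cdot)0\|_H^k \le -\frac{ck}{4}\bigl(\E\|X(t,s;\cdot)0\|_H^k\bigr)^{\frac{k+\a-2}{k}} + C(k)\E(1+f_t)^{\frac{k-2+\a}{\a}}$; for $\a>2$ the superlinear a priori bound of Lemma \ref{lemma:comp_superlinear} gives a bound at time $0$ that is uniform in $s$ (this is where the c\`adl\`ag assumption enters), for $\a=2$ Gronwall plus exponential integrability of $\E f_t^{k/2}$ does it, and Fatou's lemma transfers the uniform bound to $\E\|u_0\|_H^k$ along $s_n\to-\infty$. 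This is repairable within your plan, but the approximation/Fatou step is not optional and should be made explicit.

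The genuine gap is in part (ii). Sublinear growth is \emph{not} a soft consequence of the Ces\`aro convergence in (i) plus path continuity: a continuous nonnegative function with spikes of height $n$ and width $n^{-2}$ near $t=n$ has Ces\`aro averages tending to $0$ while growing linearly along the spikes, so continuity gives no quantitative lower bound on the ``intervals around $t_n$'' in your contradiction argument, and your Borel--Cantelli variant needs summability of $\P[\sup_{t\in[n,n+1]}\|u_t\|_H^k>\ve n]$, i.e.\ integrability of the local supremum, which neither (i) nor continuity provides (this is a continuous-time issue; for sequences $\E\|u_0\|_H^k<\infty$ alone would suffice). The missing ingredient, and the one the paper supplies, is the estimate $\E\sup_{t\in[0,1]}\|u_t\|_H^k \le C\bigl(\E\|u_0\|_H^k + \E\int_0^1 f_r^{\frac{k-2+\a}{\a}}dr\bigr)<\infty$, obtained by the same It\^o computation combined with Burkholder's inequality for the stochastic integral term; with $\sup_{t\in[0,1]}\|u_0(\t_t\cdot)\|_H^k\in L^1(\O)$ in hand, the dichotomy of linear growth \cite[Proposition 4.1.3]{A98} yields $\|u_t(\o)\|_H^k/|t|\to 0$ as $t\to\pm\infty$ on a $\t$-invariant set of full measure. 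Without this sup-moment bound (or a substitute playing the same role) your argument for (ii) does not close.
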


\subsection{Generation of stochastic flows}

We now return to the analysis of \eqref{eqn:spde2} with $A$ satisfying $(A1)$--$(A4)$.

\begin{definition}\label{def:soln_pathw}
  A continuous, $H$-valued, $\bar\mcF_t$-adapted process $\{S(t,s;\o)x\}_{t \in [s,\infty)}$ is a solution to \eqref{eqn:spde2} if for a.a.\ $\o \in \O$: $S(\cdot,s;\o)x \in L^\a_{loc}([s,\infty);V)$ and 
    $$ S(t,s;\o)x = x + \int_s^t A(r,S(r,s;\o)x;\o) dr + \int_s^t \mu S(r,s;\cdot)x \circ d\b_r(\o) + W_t(\o)-W_s(\o),$$
  for all $t \ge s$, where $\a$ is as in $(A3)$.
\end{definition}

In order to transform \eqref{eqn:spde2} into a random PDE using strictly stationary solutions we need to require 
\begin{enumerate}
  \item [$(V)$] There is an operator $M: V \to V^*$ satisfying $(H1)$--$(H4)$, $(H2')$ with the same coercivity exponent $\a$ as for $A$ in $(A3)$.
\end{enumerate} 
This is an assumption on the underlying Gelfand triple $V \subseteq H \subseteq V^*$ rather than on the actual SPDE. If $V$ is a Hilbert space and $\a = 2$ (as e.g.\ for SRDE with sublinear reaction term) then $M$ can be chosen to be the Riesz map $i_V: V \to V^*$. For many degenerate SPDE like stochastic generalized porous media equations and stochastic generalized degenerated $p$-Laplace equations such an operator $M$ can also be easily found. For example, for the triple $V=L^{\a}(\mcO) \subseteq H=(H_0^1(\mcO))^* \subseteq V^*$ one can choose $M(v) = \D |v|^{\a-2}v$. 

The operator $M$ is used to construct stationary solutions corresponding to $dX_t = M(X_t)dt + dW_t$ that take values in $V$ while $W$ takes values in $H$. If already $W \in V$ then this regularizing property is not needed and we can just choose $M=-Id_H$. Thus condition $(V)$ can be dropped in this case.

Let $X(t,s;\o)x$ denote a variational solution to \eqref{eqn:spde2} starting in $x$ at time $s$ (which in case of Remark \ref{rmk:reockner_setup} exists and is unique). In order to associate a stochastic flow to \eqref{eqn:spde2} we transform the SPDE into a random PDE. First, we use a transformation to cancel the multiplicative noise, then the additive part will be dealt with. Let $z_t$ be the strictly stationary solution to 
 \[  dz_t  = - z_t dt + d\b_t,\]
given by $z_t(\o) = \int_{-\infty}^0 e^{s}\b_s(\t_t\o)ds$. Then $\mu_t := e^{-\mu z_t}$ satisfies
  \[ d\mu_t = \mu \mu_t z_t dt - \mu \mu_t \circ d\b_t.  \]
For $\tilde{X}(t,s;\o)x := \mu_t(\o)X(t,s;\o)x$ we obtain
\begin{align*}
  (\tilde{X}(t,s;\o)x,v)_H 
  &= \mu_t (X(t,s;\o)x,v)_H \\
  &= (\mu_s x,v)_H + \int_s^t \mu_r \ _{V^*}\<A(r,{\mu}_r^{-1} \tilde{X}(r,s;\o)x),v\>_V dr \\
    &\hskip10pt + (\int_s^t \mu_r \circ dW_r,v)_H + \mu \int_s^t (\tilde{X}(r,s;\o)x,v)_H z_rdr,
\end{align*}
$\P$-a.s.\ for all $v \in V$. The diffusion coefficients $B_t := \mu_t$ satisfy $\|B_t\|_{L_2^0}^2 = \tr(Q) |\mu_t|^2 =: f_t$. Since $\E f_t = \tr(Q) \E e^{-2\mu z_t}$ is constant, all conditions of Theorem \ref{thm:nonlinear_OU} are satisfied for $M,B$ (using assumption $(V)$). Let $u_t$ be the $\mcF$-measurable, $\bar\mcF_t$-adapted, strictly stationary solution (given by Theorem \ref{thm:nonlinear_OU}) to   
  $$ du_t = - M(u_t) dt + \mu_t \circ  dW_t.$$ 
Defining $\bar{X}(t,s;\o)x := \tilde{X}(t,s;\o)x - u_t(\o)$ we get
\begin{align*}
  (\bar{X}(t,s;\o)x,v)_H 
  &= (\mu_s x - u_s,v)_H + \int_s^t \mu_r \ _{V^*}\<A (r,\mu_r^{-1} (\bar{X}(r,s;\o)x + u_r)),v\>_V dr \\
    &\hskip10pt +  \mu \int_s^t ((\bar{X}(r,s;\o)x + u_r) z_r,v)_H dr + \int_s^t \ _{V^*}\<M(u_r),v\>_V dr,\nonumber
\end{align*}
$\P$-a.s.\ for all $v \in V$. Taken together we have used the following stationary conjugation mapping
\begin{equation}\label{eqn:stat_conj}
  T(t,\o)y := \mu_t(\o)y-u_t(\o)
\end{equation}
and the conjugated process $Z(t,s;\o)x := T(t,\o)X(t,s;\o)T^{-1}(s,\o)x$ satisfies 
\begin{equation}\label{eqn:transformed_spde_0}
  Z(t,s;\o)x = x + \int_s^t \mu_r A (r,\mu_r^{-1} (Z(r,s;\o)x + u_r)) +  \mu (Z(r,s;\o)x + u_r) z_r +  M(u_r) dr
\end{equation}
as an equation in $V^*$. Let
  $$ A_\o(r,v) := 
    \begin{cases}
        \mu_r A \left(r, \mu_r^{-1} (v + u_r) \right) +  \mu u_r z_r +  M(u_r) & \text{, if } u_r \in V \\
        \mu_r A \left(r, \mu_r^{-1} v \right) & \text{, otherwise,}
    \end{cases}$$
where for simplicity we suppressed the $\o$-dependency of $\mu_r$ and $u_r$. Recall that for all $\o \in \O$ and almost all $r \in\R$, we have $u_r(\o) \in V$. Hence from \eqref{eqn:transformed_spde_0} we obtain
\begin{align}\label{eqn:transformed_spde}
  Z(t,s;\o)x = x + \int_s^t \big( A_\o(r,Z(r,s;\o)x) + \mu z_r(\o) Z(r,s;\o)x \big)\ dr.
\end{align}
In order to define the associated stochastic flow to \eqref{eqn:spde2} we will solve \eqref{eqn:transformed_spde} for each $\o \in \O$ and then set 
\begin{align}\label{eqn:def_flow}
  S(t,s;\o)x := T(t,\o)^{-1}Z(t,s;\o)(T(s,\o)x).
\end{align}
Due to the time-inhomogeneity of the drift $A$ we cannot expect the stochastic flow to be a cocycle in general. However, if the drift is strictly stationary, i.e.
  $$A(t,v;\o) = A(0,v;\t_t\o),\quad\forall (t,v,\o) \in \R\times V\times\Omega$$
then the cocycle property will be obtained. 

\begin{theorem}\label{thm:generation}
  Assume $(A1)$--$(A4)$, $(V)$. Then
  \begin{enumerate}
   \item $Z(t,s;\o)$, $S(t,s;\o)$ defined in \eqref{eqn:transformed_spde},\eqref{eqn:def_flow} are stationary conjugated continuous stochastic flows in $H$. 
   \item The maps $t \mapsto Z(t,s;\o)x$, $S(t,s;\o)x$ are continuous, $x \mapsto Z(t,s;\o)x$, $S(t,s;\o)x$ are continuous locally uniformly in $s,t$ and $s \mapsto Z(t,s;\o)x$, $S(t,s;\o)x$ are right-continuous. 
   \item If $A$ is $(\mcB(\R)\otimes\mcB(V)\otimes\mcF,\mcB(V^*))$-measurable then $Z(t,s;\o)$ and $S(t,s;\o)$ are measurable stochastic flows. \\
         If $A$ is $\bar\mcF_t$-adapted (i.e.\ $A(t,\cdot;\cdot)$ is $(\mcB(V)\otimes\bar\mcF_t,\mcB(V^*))$-measurable for all $t \in \R$) then $S(t,s;\o)x$ is a solution of \eqref{eqn:spde2} in the sense of Definition \ref{def:soln_pathw}.
   \item If $A(t,v;\o)$ is strictly stationary then $Z(t,s;\o)$ and $S(t,s;\o)$ are cocycles.
  \end{enumerate}
\end{theorem}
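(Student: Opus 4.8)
The plan is to solve the random partial differential equation \eqref{eqn:transformed_spde} pathwise by deterministic monotone operator theory and then recover $S$ from $Z$ through the conjugation \eqref{eqn:stat_conj}--\eqref{eqn:def_flow}, reading off every assertion from the conjugation calculus of Section~1 (in particular Proposition \ref{prop:def_conj_flow}) together with Theorem \ref{thm:nonlinear_OU}. The first step is to check that, for each fixed $\o \in \O$, the drift $\tilde A_\o(r,v):=A_\o(r,v)+\mu z_r(\o)v$ appearing in \eqref{eqn:transformed_spde} satisfies the standard variational assumptions in the style of $(A1)$--$(A4)$ pathwise on every compact time interval. Hemicontinuity is inherited from $(A1)$ and continuity of the added linear term; the monotonicity and growth bounds follow from $(A2)$ and $(A4)$ after the affine change of variables $w=\mu_r^{-1}(v+u_r)$, combined with $(H2')$--$(H4)$ for $M$ and the Lipschitz continuity of $v\mapsto\mu z_r(\o)v$. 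The delicate point is coercivity: expanding $2\,{}_{V^*}\<\mu_r A(r,\mu_r^{-1}(v+u_r)),v\>_V$ in terms of $w$ and applying $(A3)$ produces, besides the good term $-c\mu_r^2\|w\|_V^\a$, cross terms in $u_r$ and in $A(r,w)$ that must be absorbed using $(A4)$ and Young's inequality; the leftover lower-order contributions together with the $\mu u_r z_r+M(u_r)$ terms are controlled by $\|u_r(\o)\|_V^\a$ and $f(r,\o)$, hence give a new $\tilde f(\cdot,\o)\in L^1_{loc}$ thanks to the local $V$-integrability of $u$ (Theorem \ref{thm:nonlinear_OU}(iii) and \eqref{eqn:u_V_bound}). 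Since $t\mapsto z_t(\o)$ and $t\mapsto\mu_t(\o)=e^{-\mu z_t(\o)}$ are continuous, hence locally bounded above and below by positive constants, the resulting $\tilde C(\cdot,\o),\tilde c(\cdot,\o),\tilde f(\cdot,\o)$ are of the required type.

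With this in hand, the deterministic case of the variational existence theorem (Theorem \ref{thm:var_ex} applied pathwise, with vanishing diffusion coefficient) yields, for every $\o,s,x$, a unique $Z(\cdot,s;\o)x\in C([s,\infty);H)\cap L^\a_{loc}([s,\infty);V)$ solving \eqref{eqn:transformed_spde}. Properties (i) and (ii) of a stochastic flow are then immediate: $Z(s,s;\o)=\mathrm{id}_H$ from the integral equation, and $Z(t,s;\o)x=Z(t,r;\o)Z(r,s;\o)x$ for $s\le r\le t$ because both sides solve \eqref{eqn:transformed_spde} on $[r,\infty)$ with the same value at $r$, by uniqueness. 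The monotonicity estimate of the first step together with Gronwall gives $\|Z(t,s;\o)x-Z(t,s;\o)y\|_H^2\le\exp(\int_s^t\tilde C(r,\o)\,dr)\|x-y\|_H^2$, so $x\mapsto Z(t,s;\o)x$ is Lipschitz, locally uniformly in $(s,t)$; $t\mapsto Z(t,s;\o)x$ is continuous because the solution lies in $C([s,\infty);H)$; and $s\mapsto Z(t,s;\o)x$ is right-continuous, since $Z(t,s;\o)x=Z(t,s';\o)Z(s',s;\o)x$ for $s\le s'\le t$, $Z(s',s;\o)x\to x$ as $s'\downarrow s$ by continuity in $t$, and one invokes continuity in the initial datum.

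It remains to transfer these properties to $S$ and to prove (3) and (4). The map $T(t,\o)y=\mu_t(\o)y-u_t(\o)$ of \eqref{eqn:stat_conj} is an affine homeomorphism of $H$ (as $\mu_t(\o)>0$), $\o\mapsto T(t,\o)x$ and $\o\mapsto T^{-1}(t,\o)y$ are measurable, and $T(t,\o)=T(0,\t_t\o)$ because $z_t(\o)=z_0(\t_t\o)$ and $u$ is strictly stationary (Theorem \ref{thm:nonlinear_OU}(iii)); thus $\mcT=\{T(t,\o)\}$ is a stationary conjugation mapping, and by Proposition \ref{prop:def_conj_flow} the process $S(t,s;\o)=T(t,\o)Z(t,s;\o)T^{-1}(s,\o)$ is a continuous stochastic flow stationarily conjugated to $Z$, with the continuity statements in (ii) carried over since $T(t,\o),T^{-1}(t,\o)$ are affine with coefficients locally bounded in $t$. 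This proves (1) and (2). For (3): if $A$ is jointly measurable then so is $(r,v,\o)\mapsto\tilde A_\o(r,v)$, and approximating \eqref{eqn:transformed_spde} by measurable maps (Galerkin, or time discretization) and invoking pathwise uniqueness shows $(t,s,\o,x)\mapsto Z(t,s;\o)x$ is measurable, hence so is $S$; if moreover $A$ is $\bar\mcF_t$-adapted then $Z(t,s;\cdot)x$ and $S(t,s;\cdot)x$ are $\bar\mcF_t$-measurable, being built pathwise from the $\bar\mcF_t$-adapted data $z_\cdot,u_\cdot$ and $A(\cdot,\cdot;\o)$ up to time $t$, and applying $T^{-1}$ to \eqref{eqn:transformed_spde}, i.e.\ running the computation preceding the theorem backwards --- using the ordinary (Stratonovich) product rule together with $d\mu_t=\mu\mu_t z_t\,dt-\mu\mu_t\circ d\b_t$, $du_t=-M(u_t)\,dt+\mu_t\circ dW_t$ and the $V^*$-valued It\^o formula --- shows that $S(t,s;\o)x$ solves \eqref{eqn:spde2} in the sense of Definition \ref{def:soln_pathw}. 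Finally, for (4): if $A(t,v;\o)=A(0,v;\t_t\o)$, then using $z_r(\o)=z_{r-s}(\t_s\o)$, $\mu_r(\o)=\mu_{r-s}(\t_s\o)$ and $u_r(\o)=u_{r-s}(\t_s\o)$ one checks $\tilde A_\o(r,v)=\tilde A_{\t_s\o}(r-s,v)$, so $t\mapsto Z(t-s,0;\t_s\o)x$ solves \eqref{eqn:transformed_spde} on $[s,\infty)$ starting from $x$ at $s$; uniqueness gives $Z(t,s;\o)x=Z(t-s,0;\t_s\o)x$, and since $\mcT$ is stationary, Proposition \ref{prop:def_conj_flow} yields that $S$ is a cocycle too.

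I expect the main obstacle to be the first step: ensuring that the time-inhomogeneous, $\o$-dependent drift $\tilde A_\o$ retains coercivity in the form of $(A3)$ after the affine, time-dependent change of variables and the addition of $\mu u_r z_r+M(u_r)+\mu z_r v$, with all new coefficients pathwise locally integrable --- this is precisely where the local $V$-integrability \eqref{eqn:u_V_bound} of the stationary solution $u$ and the growth bound $(H4)$ for $M$ are indispensable. A secondary technical point is the joint measurability of the pathwise solution map and the reverse It\^o--Stratonovich computation identifying $S$ as a solution of \eqref{eqn:spde2}; both are routine but require the It\^o formula in the Gelfand triple combined with the scalar semimartingales $\mu_t$ and $z_t$.
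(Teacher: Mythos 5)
Your proposal is correct and follows essentially the same route as the paper: pathwise verification of the variational hypotheses for the transformed drift (with coercivity handled via $(A4)$, $(H4)$ for $M$, Young's inequality and the fact that $u_\cdot(\o)\in L^\a_{loc}(\R;V)$ for every $\o$), pathwise application of Theorem \ref{thm:var_ex} with vanishing diffusion, uniqueness for the flow and cocycle properties, the Gronwall/Lipschitz estimate for continuity in $x$ and right-continuity in $s$, and the conjugation, measurability and backwards-transformation arguments for $S$. The only deviation is that you keep the linear term $\mu z_r v$ inside the drift rather than first removing it through the paper's auxiliary transformation $k(s,t;\o)=e^{-\mu\int_s^t z_r(\o)\,dr}$; this is an inessential simplification, since that term is pathwise locally bounded and does not affect $(H1)$--$(H4)$.
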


\subsection{Existence of a random attractor}

In order to obtain compactness of the stochastic flows constructed above we assume
\begin{enumerate}
  \item [$(A5)$] The embedding $V \subseteq H$ is compact.
\end{enumerate}

In the following let $\mcD$ be the system of all tempered sets and $\mcD^b$ be the family of all deterministic bounded sets. We obtain

\begin{theorem}[Existence of a random attractor]\label{thm:superlinear_ra}
  Assume $(A1)$--$(A5)$ and $(V)$. Then 
  \begin{enumerate}
   \item $S(t,s;\o)$ is a compact stochastic flow.  
  \end{enumerate}
  Assume that $c, C$ are independent of time $t$ and $t \mapsto f(t,\o)$ is exponentially integrable. For $\a = 2$ additionally assume $C < \frac{c}{4\l}$ in $(A3)$. Then
  \begin{enumerate}
   \item[ii.] There is a random $\mcD$-attractor $\mcA$ for $S(t,s;\o)$.
   \item[iii.] If $A$ is $(\mcB(\R)\otimes\mcB(V)\otimes\mcF,\mcB(V^*))$-measurable and $c,C$ in $(A3)$ are $(\mcF,\mcB(\R))$-measurable, then $\mcA$ is measurable.
   \item[iv.] If $A$ is strictly stationary, then there exists a measurable, strictly stationary random $\mcD^b$-attractor for $S(t,s;\o)$.
  \end{enumerate}
\end{theorem}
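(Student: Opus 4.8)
The plan is to prove the four assertions in order, leveraging the conjugation structure built in the preceding sections. For (i), I would work with the conjugated flow $Z(t,s;\o)$ solving \eqref{eqn:transformed_spde}, which is an $\o$-wise random PDE with drift $A_\o(r,\cdot) + \mu z_r(\o)\,\mathrm{Id}$. The key is to derive, for fixed $\o$ and $s<t$ and bounded $B \subseteq H$, an a priori bound showing $Z(t,s;\o)B$ is bounded in $V$; since $V \hookrightarrow H$ is compact by $(A5)$, this gives precompactness in $H$. Concretely, I would test the equation with $Z(r,s;\o)x$ and use the coercivity $(A3)$ of $A$ (transferred through the $\mu_r$-scaling, which is locally bounded in $r$) together with the energy bound \eqref{eqn:u_V_bound} on $\|u_r\|_V^\a$; integrating over a short interval $[t-\delta,t]$ and using that $\frac{d}{dr}\|Z\|_H^2$ is controlled, one extracts $\int_{t-\delta}^t \|Z(r,s;\o)x\|_V^\a\,dr \le $ const uniformly in $x \in T(s,\o)B$, hence $\|Z(r^\ast,s;\o)x\|_V < \infty$ for some $r^\ast$, and then propagate forward to time $t$ using continuity. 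Adapting \cite{CCD99} as the paper announces, the compactness of $S(t,s;\o) = T(t,\o)Z(t,s;\o)T(s,\o)^{-1}$ then follows because $T(t,\o)$ is a homeomorphism of $H$ (multiplication by $\mu_t(\o) \ne 0$ and translation by $u_t(\o)$).

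For (ii), the strategy is to produce a $\mcD$-absorbing set for $Z$ and invoke Theorem \ref{thm:suff_cond_attr}. Working again with \eqref{eqn:transformed_spde}, I would compute $\frac{d}{dt}\|Z(t,s;\o)x\|_H^2$ and use $(A3)$: the scaled drift contributes $-c(t,\o)\mu_r^{2-\a}\|\cdot\|_V^\a$ plus lower-order terms, while the $\mu z_r$ term contributes $2\mu z_r \|Z\|_H^2$. Using $\|v\|_H^2 \le \l\|v\|_V^2$ and, crucially, the $\a=2$ smallness condition $C < c/(4\l)$ (or, for $\a>2$, Young's inequality to dominate the linear-in-$\|\cdot\|_H^2$ term by the superlinear coercive term), one arrives at a differential inequality $\frac{d}{dt}y_t \le -\gamma_t y_t + g_t$ with $y_t = \|Z(t,s;\o)x\|_H^2$, where $\gamma_t$ has positive mean and $g_t$ involves $f_t$, $\|u_t\|_V^\a$ and polynomials in $z_t$. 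By Gronwall and the sublinear-growth / Birkhoff facts of Theorem \ref{thm:nonlinear_OU_prop} (sublinearity of $\|u_t\|_H^k$, exponential integrability \eqref{eqn:u_V_bound}, and temperedness of $z_t$ and $f_t$), the contribution of $y_s$ as $s \to -\infty$ vanishes for tempered $D$, yielding a tempered (hence in $\mcD$) absorbing ball $\hat K(t,\o)$ for $Z$; combined with the compactness from (i) this gives a $\mcD$-attracting family of compact sets, so $Z$ is $\mcD$-asymptotically compact, and Theorem \ref{thm:suff_cond_attr} produces a random attractor $\tilde\mcA$ for $Z$. Transporting back via Theorem \ref{thm:conj_attractor} with the uniformly continuous (on bounded sets — one restricts to the relevant absorbed region) maps $T(t,\o)$ gives the $\mcD$-attractor $\mcA$ for $S$. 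One subtlety: $T(t,\o)$ need not be uniformly continuous on all of $H$ when $u_t \ne 0$, so I would phrase the transport using that $\tilde\mcA$ is already contained in a fixed compact set and $T(t,\o)$ is continuous there.

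For (iii), measurability: under the stated measurability of $A$, $c$, $C$, Theorem \ref{thm:generation}(iii) gives $S$ (and $Z$) measurable, and the absorbing/attracting sets constructed in (ii) are built from $u_t$, $z_t$, $f_t$, all of which are $\mcF$-measurable; hence $\tilde\mcA$ is a random closed set by the second part of Theorem \ref{thm:suff_cond_attr} (using right lower-semicontinuity of a suitable countable subfamily $\mcD_0$ of balls of tempered radius, and the right-continuity in $s$ from Theorem \ref{thm:generation}(ii)), and measurability of $\mcA = T(t,\o)\tilde\mcA(t,\o)$ follows from the last sentence of Theorem \ref{thm:conj_attractor}. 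For (iv): if $A$ is strictly stationary, then $M$, $B=\mu_t$, $z_t$, $u_t$ are all strictly stationary (Theorem \ref{thm:nonlinear_OU}(iii)), so $T(t,\o) = T(0,\t_t\o)$ is a stationary conjugation mapping and $Z$, $S$ are cocycles (Theorem \ref{thm:generation}(iv)); restricting to $\mcD^b$, the absorbing balls can be taken strictly stationary (their radii depend on $\o$ only through $z$, $u$, $f$ evaluated along $\t_\cdot\o$), so the countable family $\mcD_0$ consists of strictly stationary right lower-semicontinuous random closed sets, and the last clause of Theorem \ref{thm:suff_cond_attr} gives a strictly stationary $\tilde\mcA$; then $\mcA(t,\o)=T(t,\o)\tilde\mcA(t,\o)=T(0,\t_t\o)\tilde\mcA(0,\t_t\o)=\mcA(0,\t_t\o)$ is strictly stationary.

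The main obstacle I anticipate is the a priori estimate underlying (i) and (ii): one must carefully track the $r$-dependent scaling factors $\mu_r = e^{-\mu z_r}$ and the perturbation $\mu z_r$, showing that despite these the coercivity of $A_\o$ survives with an absorbing-set conclusion, and — especially in the borderline case $\a=2$ — that the condition $C < c/(4\l)$ is exactly what is needed to beat the linear term and the $z$-drift after taking the $\t$-invariant long-time averages from Theorem \ref{thm:nonlinear_OU_prop}. The secondary technical point is handling the non-uniform continuity of $T(t,\o)$ when passing attractors back and forth, which I would resolve by always working inside the already-absorbed compact region rather than on all of $H$.
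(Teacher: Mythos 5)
Your treatment of parts (ii)--(iv) follows essentially the paper's route: the transformed coercivity of $A_\omega$ yields a pathwise differential inequality for $\|Z(t,s;\omega)x\|_H^2$ whose Gronwall coefficient has strictly negative long-time averages by Birkhoff's ergodic theorem (this is where $C<c/(4\lambda)$ enters when $\alpha=2$), giving bounded absorption for sets of subexponential growth (this is Proposition \ref{prop:bdd_abs}); the compact attracting family is then $K(t,\omega)=\overline{Z(t,t-1;\omega)F(t-1,\omega)}$, and Theorems \ref{thm:suff_cond_attr} and \ref{thm:conj_attractor} give existence, measurability (via the countable family of balls $B(0,e^{\eta(\omega)|t|}n)$) and, in the cocycle case, strict stationarity, exactly as in the paper. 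One remark: your worry that $T(t,\omega)$ may fail to be uniformly continuous is unfounded, since $T(t,\omega)y=\mu_t(\omega)y-u_t(\omega)$ is affine, hence globally Lipschitz in $y$, so Theorem \ref{thm:conj_attractor} applies as stated without restricting to the absorbed region.

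The genuine gap is in part (i). From coercivity you obtain a bound on $\int_{t-\delta}^{t}\|Z(r,s;\omega)x\|_V^{\alpha}\,dr$ uniform over $x$ in a bounded set $B$, but this only yields, for each fixed $x$, some time $r^{*}=r^{*}(x)$ at which $\|Z(r^{*},s;\omega)x\|_V$ is controlled; there is no common such time for all $x\in B$. Moreover the framework $(A1)$--$(A5)$ provides no $V$-smoothing or $V$-propagation: solutions lie in $V$ only for a.e.\ time, the flow is continuous only in $H$, and $A$ maps $V$ into $V^{*}$, so "propagating the $V$-bound forward to time $t$ by continuity" is not available. Hence the claim that $Z(t,s;\omega)B$ is bounded in $V$ at the fixed final time $t$ cannot be derived (and is false in general), and the compact-embedding conclusion collapses. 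The paper itself proves (i) by reference to \cite{G11d}; the method (adapted from \cite{CCD99}, as announced in the introduction) is of a different nature: one compares two trajectories via the monotonicity $(A2)$ of the transformed drift, which gives $\|Z(t,s;\omega)x-Z(t,s;\omega)y\|_H^2\le e^{\int_\tau^t(C+2\mu z_r)\,dr}\,\|Z(\tau,s;\omega)x-Z(\tau,s;\omega)y\|_H^2$ for $\tau\in[t-\delta,t]$, averages this inequality over $\tau$, and then uses the uniform bounds in $L^{\alpha}([t-\delta,t];V)$ and on the time derivatives in $L^{\alpha/(\alpha-1)}([t-\delta,t];V^{*})$ together with compactness of $V\subseteq H$ (an Aubin--Lions/energy-type step giving strong precompactness in $L^{2}([t-\delta,t];H)$) to conclude that $\{Z(t,s;\omega)x:\ x\in B\}$ is totally bounded in $H$. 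Your argument for (i) needs to be replaced by a step of this kind.
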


 The long-time behavior of solutions to \eqref{eqn:spde2} becomes especially simple under the following strong monotonicity condition
  \begin{enumerate}
    \item [$(A2')$] (Strong Monotonicity) There exist $\l: \O \to \R_+\setminus\{0\}$, $\a >  2$ such that
        $$2{  }_{V^*}\<A(t,v_1;\o)-A(t,v_2;\o), v_1-v_2 \>_V \le -\l(\o) \|v_1-v_2\|_H^\a,$$
  \end{enumerate}
for all $v_1,v_2 \in V, t \in \R, \o \in \O$. We will prove that in this case the random attractor consists of a single random point. A similar result has been obtained in \cite{GLR11} for additive noise taking values in $V$. We generalize this result by allowing $H$-valued noise and additional real linear multiplicative noise. Let $\mcD^g$ be the system of all families of subsets  $\{D(t,\o)\}_{t \in \R, \o \in \O}$ of $H$. 

\begin{theorem}\label{thm:equil}
   Let $A$ be $(\mcB(\R)\otimes\mcB(V)\otimes\mcF,\mcB(V^*))$-measurable, $\bar\mcF_t$-adapted, satisfy $(A1)$--$(A4)$, $(A2')$ and assume $(V)$. Then there exists an $\mcF$-measurable, continuous process $\eta: \R \times \O \to H$ such that $\P$-a.s.
      \[ \lim_{s \to - \infty} S(t,s;\o)x = \eta(t,\o), \ \forall\ (t,x) \in \R\times H\]
    and $\mcA(t,\o) := \{ \eta(t,\o) \}$ defines a random $\mcD^g$-attractor for $S(t,s;\o)$. The speed of convergence is estimated by
    $$ \|S(t,s;\o)x - \eta(t,\o)\|_H^2 \le
         \left( \left(\frac{\a}{2}-1 \right)\l(\o) \int_{s}^t e^{(\a-2)\mu (\b_r(\o)-\b_t(\o))}  dr \right)^{-\frac{2}{\a-2}}.$$ 

    If $A$ is strictly stationary then $\mcA$ can be chosen to be strictly stationary.
\end{theorem}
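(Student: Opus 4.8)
The plan is to leverage the strong monotonicity $(A2')$: with $\a>2$ it forces any two solutions to contract towards each other at a rate that is \emph{independent of their initial data}, so the pullback limit is unique and the attractor is a single moving point.

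\emph{Step 1 (a data‑independent contraction estimate).} Since $A$ is $\bar\mcF_t$‑adapted, $S(t,s;\o)x$ is a variational solution of \eqref{eqn:spde2} in the sense of Definition~\ref{def:soln_pathw}, so for fixed $x,y\in H$ the difference $v_r:=S(r,s;\o)x-S(r,s;\o)y$ satisfies $dv_r=\bigl(A(r,S(r,s;\o)x)-A(r,S(r,s;\o)y)\bigr)dr+\mu v_r\circ d\b_r$, the $W$‑noise having cancelled. Applying the variational Itô/Stratonovich chain rule to $\|v_r\|_H^2$ gives
\[
  d\|v_r\|_H^2 = 2\,{}_{V^*}\langle A(r,S(r,s;\o)x)-A(r,S(r,s;\o)y),v_r\rangle_V\,dr + 2\mu\|v_r\|_H^2\circ d\b_r,
\]
and the substitution $\psi_r:=e^{-2\mu\b_r(\o)}\|v_r\|_H^2$ removes the multiplicative noise entirely: $\psi$ is absolutely continuous and, by $(A2')$, satisfies the pathwise inequality $\psi_r'\le-\l(\o)\,e^{(\a-2)\mu\b_r(\o)}\psi_r^{\a/2}$ for a.e.\ $r\in[s,t]$. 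As $\a>2$ and $\psi$ is non‑increasing (so $\psi_r>0$ on $[s,t]$ whenever $\psi_t>0$, the case $\psi_t=0$ being trivial), integrating this Bernoulli inequality yields $\psi_t^{1-\a/2}\ge(\tfrac\a2-1)\l(\o)\int_s^te^{(\a-2)\mu\b_r}dr$, and reverting the substitution gives, for all $x,y\in H$ and $s\le t$,
\[
  \|S(t,s;\o)x-S(t,s;\o)y\|_H^2 \le \Bigl(\bigl(\tfrac\a2-1\bigr)\l(\o)\int_s^te^{(\a-2)\mu(\b_r(\o)-\b_t(\o))}\,dr\Bigr)^{-\frac{2}{\a-2}}.
\]

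\emph{Step 2 (the limiting point $\eta$).} One first checks that $\int_s^te^{(\a-2)\mu(\b_r-\b_t)}dr\to\infty$ as $s\to-\infty$ $\P$‑a.s.: for $\mu=0$ this is trivial, and for $\mu\ne0$ the integral dominates the Lebesgue measure of $\{r\le t:(\a-2)\mu(\b_r-\b_t)\ge0\}$, which — since $(\b_{t-u}-\b_t)_{u\ge0}$ is a standard Brownian motion and the Lebesgue measure of its positive (resp.\ negative) part is invariant under Brownian scaling, hence a.s.\ $\in\{0,\infty\}$ — is a.s.\ infinite. Fix a $\t$‑invariant full‑measure set $\O_0$ on which this holds and on which $S$ is a continuous, measurable flow. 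For $\o\in\O_0$ the bound of Step~1 together with the flow identity $S(t,s_1;\o)x=S(t,s_2;\o)S(s_2,s_1;\o)x$ shows that $\{S(t,s;\o)x\}_s$ is $\|\cdot\|_H$‑Cauchy as $s\to-\infty$ with limit independent of $x$; setting $\eta(n,\o):=\lim_{s\to-\infty}S(n,s;\o)o$ for $n\in\mathbb Z$ and $\eta(t,\o):=S(t,n;\o)\eta(n,\o)$ for $t\in[n,n+1)$, continuity of the flow yields $\eta(t,\o)=\lim_{s\to-\infty}S(t,s;\o)x$ for all $(t,x)$, continuity of $t\mapsto\eta(t,\o)$, $\mcF$‑measurability of $\eta$ as a limit/composition of the $\mcF$‑measurable continuous flow (Theorem~\ref{thm:generation}), and the invariance identity $S(t,s;\o)\eta(s,\o)=\eta(t,\o)$ for $s\le t$.

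\emph{Step 3 (attractor, speed, stationarity).} Then $\mcA(t,\o):=\{\eta(t,\o)\}$ is nonempty and compact, invariant by Step~2, and $\mcD^g$‑attracting: for any family $D$ (with $D(s,\o)$ nonempty for small $s$, the only relevant case) and the reference point $o$, $d\bigl(S(t,s;\o)D(s,\o),\{\eta(t,\o)\}\bigr)=\sup_{x\in D(s,\o)}\|S(t,s;\o)x-\eta(t,\o)\|_H$ is bounded by the (initial‑data‑uniform) Step~1 bound plus $\|S(t,s;\o)o-\eta(t,\o)\|_H$, both of which vanish as $s\to-\infty$ on $\O_0$; measurability of $\mcA$ is that of $\eta$. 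Since $\eta(t,\o)=S(t,s;\o)\eta(s,\o)$, taking $y=\eta(s,\o)$ in Step~1 gives the stated convergence speed. Finally, if $A$ is strictly stationary then $S$ is a cocycle by Theorem~\ref{thm:generation}(iv), whence $\eta(t,\o)=\eta(0,\t_t\o)$ $\P$‑a.s.\ for each $t$; as $\eta(\cdot,\o)=S(\cdot,s_0;\o)\eta(s_0,\o)\in C(\R;H)\cap L^\a_{loc}(\R;V)$ $\P$‑a.s.\ by Definition~\ref{def:soln_pathw}, Proposition~\ref{prop:stat_perfect} provides an indistinguishable strictly stationary version $\ti{\eta}$, and $\{\ti{\eta}(t,\o)\}$ is the desired strictly stationary $\mcD^g$‑attractor.

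\emph{Main obstacle.} Everything hinges on Step~1: the key realization is that $(A2')$ with $\a>2$, after the noise‑killing substitution $\psi_r=e^{-2\mu\b_r}\|v_r\|_H^2$, produces an initial‑data‑free Bernoulli differential inequality; the rest is bookkeeping (and, notably, the compactness assumption $(A5)$ is not needed, since a point is automatically compact). Beyond that, the only genuine analytic input is the a.s.\ divergence of $\int_{-\infty}^te^{(\a-2)\mu(\b_r-\b_t)}dr$, and the only technical care needed — as throughout the paper — is to keep $\eta$ measurable with respect to the non‑completed $\sigma$‑algebra $\mcF$ and to arrange all exceptional sets $\t$‑invariantly so that Proposition~\ref{prop:stat_perfect} applies.
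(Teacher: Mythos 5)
Your proposal is correct and follows essentially the same route as the paper: kill the noise with the transformation $e^{-\mu\b_t}$ (equivalently your $\psi_r=e^{-2\mu\b_r}\|v_r\|_H^2$), use $(A2')$ to obtain a data-independent Bernoulli differential inequality (the paper's Lemma \ref{lemma:comp_1}), combine it with the a.s.\ divergence of $\int_s^t e^{(\a-2)\mu(\b_r-\b_t)}\,dr$ (the paper's Lemma \ref{lemma:integral_div}, proved there via the L\'evy arcsine law, your occupation-time/scaling argument being an equivalent elementary input) to get the pullback limit $\eta$, and then conclude invariance, $\mcD^g$-attraction, the speed estimate, and strict stationarity via the cocycle property and Proposition \ref{prop:stat_perfect}. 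The only cosmetic differences are the order of differencing versus transforming and your explicit handling of the universal null set through a countable construction, which the paper achieves by continuity/right-continuity of both sides of the contraction estimate.
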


\subsection{Random attractors for strongly mixing RDS}\label{sec:str_mix}

As outlined in the introduction, due to the possibility to consider non-degenerate trace class noise in the construction of RDS in Theorem \ref{thm:generation} we may apply probabilistic results proving ergodicity of the associated Markovian semigroup\footnote{Note that while there will be an associated Markovian semigroup defined by $P_tf(x) := \E f(\vp(t;\cdot)x)$ in the applications considered in Section \ref{sec:appl}, this is not true in general. For more details we refer to \cite{C91}.} in the applications. In this section we present a general result combining ergodicity and monotonicity of RDS to show that the minimal weak point/interval random attractor collapses to a single random point. This observation is of particular significance in the case of degenerate, perturbed $p$-Laplace equations since in the deterministic case the attractor can be shown to be infinite dimensional (cf.\ Section \ref{ssec:PLpl} below).

In the following we assume that there is a cone $H_+ \subseteq H$, i.e.\ a closed, convex set such that $\l H_+ \subseteq H_+$ for all $\l \ge 0$ and $H_+ \cap -H_+ = \{0\}$. Then $V_+ := H_+ \cap V$ is a cone in $V$. This defines a partial order $\le$ on $H$ and $V$ via $x \le y$ iff $y-x \in H_+$. For two elements $x,y \in H$ we define the interval $[x,y] = \{z \in H|\ x \le z \le y\}$. We further assume that $V_+$ is $H$-solid which means that each compact set $K \subseteq V$ is contained in an interval $[x,y] \subseteq H$, that the cone $H_+$ is normal, i.e.\ there is a $c > 0$ such that for all $0 \le x \le y$ we have $\|x\|_H \le c \|y\|_H$ and the existence of a map $_+: H \to H$ such that $x_+ \ge x,0$ for all $x \in H$.

\begin{definition}
   An RDS $\vp$ is said to be 
   \begin{enumerate}
    \item strongly mixing with ergodic measure $\mu$ if 
    $$\mcL(\vp(t;\cdot)x) \xrightarrow{TV} \mu, \quad \text{ for } t \to \infty,$$
   in total variation norm, for all $x \in H$.
    \item order-preserving (or monotone), if $\vp(t;\o)x \le \vp(t;\o)y$ for all $x \le y$, $t \ge 0$, $\o \in \O$.
   \end{enumerate}
\end{definition}

For a detailed account on order-preserving RDS we refer to \cite{C02}. We will now work with a weaker concept of a random attractor, where the requirement of $\P$-a.s.\ attraction is replaced by attraction in probability and only deterministic intervals are required to be attracted. Even this so-called weak interval random attractor will turn out to be of infinite fractal dimension for perturbed $p$-Laplace equations without additive noise (cf.\ Section \ref{ssec:PLpl} below).

\begin{definition}
  Let $\vp$ be an RDS. A random closed set $\{\mcA(\o)\}_{\o \in \O}$ is said to be a weak interval random attractor if
  \begin{enumerate}
   \item $\mcA(\o)$ is nonempty and compact $\P$-a.s.
   \item For every $x \le y$, $x,y \in H$
        \begin{equation}\label{eqn:interval_attr}
           d(\vp(t;\cdot)[x,y],\mcA(\cdot)) \xrightarrow{p} 0, \quad \text{for t} \to \infty
        \end{equation}
        in probability.
   \item $\mcA$ is invariant under $\vp$, i.e.\ $\vp(t,\o)\mcA(\o) = \mcA(\t_t\o)$, $\P$-a.s.
  \end{enumerate}
  A weak interval random attractor $\mcA$ is said to be minimal if for each weak interval random attractor $\td \mcA$ we have
    $$ \mcA \subseteq \td \mcA,\ \P-\text{a.s.}$$
\end{definition}

If we replace \eqref{eqn:interval_attr} by 
   $$ d(\vp(t;\cdot)x,\mcA(\cdot)) \xrightarrow{p} 0, \quad \text{for t} \to \infty $$
for all $x \in H$, then $\mcA$ is called a weak point random attractor. Obviously, each weak interval random attractor is a weak point random attractor.

\begin{remark}\label{rmk:interval=point}
   Let $\vp$ be an order preserving, strongly mixing RDS. Then each weak point random attractor is a weak interval random attractor and thus these two classes of random attractors coincide.
\end{remark}

The following theorem allows to use mixing properties of the Markovian dynamics to deduce the collapse of the minimal weak interval random attractor into a single random point.

\begin{theorem}\label{thm:collapse}
   Let $\vp$ be an order preserving, strongly mixing RDS with ergodic measure $\mu$ concentrated on $V$. Then there exists a unique, minimal weak point/interval random attractor $\mcA(\o) = \{v(\o)\}$ given by a single random point $v: \O \to H$ with $v \in V$, $\P$-almost surely. 
\end{theorem}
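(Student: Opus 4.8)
The plan is to exploit the interplay between the order structure and the mixing property to first pin down a candidate random point, then verify the attractor axioms. The natural candidate should be a strictly stationary solution: since $\vp$ is strongly mixing with ergodic measure $\mu$ concentrated on $V$, one expects a unique invariant random point $v(\o)$ with $\mcL(v) = \mu$, hence $v \in V$ $\P$-a.s. First I would construct $v$ by a pullback/sandwiching argument. Using order preservation, for any fixed $x \le y$ in $H$ the orbit $\vp(t;\o)[x,y]$ lies between $\vp(t;\o)x$ and $\vp(t;\o)y$, so it suffices to control these two extreme trajectories. By Remark \ref{rmk:interval=point} it is enough to produce a weak point random attractor. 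I would consider the pullback sequence $\vp(n;\t_{-n}\o)x$; strong mixing forces $\mcL(\vp(n;\cdot)x) \to \mu$ in total variation, and combined with monotonicity in the pullback variable (the sandwich $\vp(n;\t_{-n}\o)x \le \vp(n;\t_{-n}\o)x' $ whenever $x \le x'$, together with normality of $H_+$ and $H$-solidity of $V_+$) one obtains that the pullback limits exist $\P$-a.s. and coincide for all starting points, defining a single random point $v(\o) \in V$.

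Next I would establish the three attractor axioms for $\mcA(\o) := \{v(\o)\}$. Compactness and nonemptiness are trivial for a singleton. Invariance $\vp(t,\o)v(\o) = v(\t_t\o)$ follows from the cocycle identity applied to the defining pullback limit, using continuity of $\vp(t,\o)\cdot$ on $H$ (the RDS is assumed continuous in the relevant sense, and $v \in V \subseteq H$). For the weak interval attraction \eqref{eqn:interval_attr}: given $x \le y$, order preservation gives $\vp(t;\o)[x,y] \subseteq [\vp(t;\o)x, \vp(t;\o)y]$, so by normality of the cone $d(\vp(t;\cdot)[x,y], \{v\}) \le c\,\|\vp(t;\cdot)y - v\| + c\,\|\vp(t;\cdot)x - v\|$ up to reordering; hence it suffices to show $\vp(t;\cdot)x \xrightarrow{p} v$ for each fixed $x$. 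This convergence in probability I would deduce from the total-variation mixing: $\mcL(\vp(t;\cdot)x) \to \mu = \mcL(v)$, upgraded to convergence in probability of the trajectory itself by the monotone sandwiching between pullback-stabilized lower and upper solutions and a standard argument that a monotone sequence converging in law to a point converges in probability.

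For minimality and uniqueness: if $\td\mcA$ is any weak interval random attractor, then attraction of the deterministic interval $[x,y]$ forces $v(\o) \in \td\mcA(\o)$ $\P$-a.s. (the limit point of an attracted net must lie in the attractor, using that $\td\mcA$ is closed), so $\mcA \subseteq \td\mcA$; applying this to two such attractors gives uniqueness of the minimal one. The main obstacle I anticipate is the rigorous passage from convergence in total variation of the laws to $\P$-a.s. or in-probability convergence of the random trajectories to the single point $v(\o)$ — i.e., controlling the pullback limit pathwise rather than only distributionally. This is precisely where the order structure must do the work: the sandwiching $\vp(n;\t_{-n}\o)x_- \le \vp(n;\t_{-n}\o)x \le \vp(n;\t_{-n}\o)x_+$ for order-bounding elements $x_\pm$, monotonicity in $n$ of the pullback (after choosing $x_\pm$ to be sub/super-fixed-points or by a diagonal argument over a countable order-dense family), normality of $H_+$ to turn order convergence into norm convergence, and $H$-solidity of $V_+$ to ensure the limit lands in $V$. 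Care is also needed to select a $\t$-invariant full-measure set and a genuinely measurable version of $v$, for which I would invoke the measurable-selection machinery already cited in the excerpt (\cite{CV77}).
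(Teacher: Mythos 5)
Your overall route coincides with the paper's: use order-preservation to reduce attraction of intervals to attraction of single trajectories, construct an equilibrium $v$ as a pullback limit, identify $\mcL(v)=\mu$ so that $v\in V$ $\P$-a.s., and obtain minimality from $H$-solidity of $V_+$ together with attraction of intervals. The paper, however, does not build the equilibrium from scratch: it chooses compact $K_n\subseteq V$ with $\mu(K_n)\ge 1-2^{-n-2}$ (possible since $\mu(V)=1$), encloses them in intervals $[f_n,g_n]$ by $H$-solidity, and then invokes \cite[Proposition 2]{CS04} to produce an $\F_{-\infty}$-measurable equilibrium $v$ with $\vp(t,\o)v(\o)=v(\t_t\o)$ and $\vp(t,\t_{-t}\cdot)0\xrightarrow{p}v$; the convergence $\vp(t,\t_{-t}\cdot)x\xrightarrow{p}v$ for arbitrary $x$ then follows from the interval-contraction property \eqref{eqn:0-conv} of Remark \ref{rmk:interval=point}, i.e.\ from \cite[Proposition 1]{CS04}.

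The genuine gap in your proposal is exactly this step, which you defer to ``monotone sandwiching'' and ``a standard argument''. The pullback trajectory $t\mapsto\vp(t,\t_{-t}\o)x$ is \emph{not} monotone in $t$ for a general initial point $x$; monotonicity holds only for sub-/super-equilibria, and the hypotheses of the theorem provide no such points -- producing suitable substitutes is precisely the role of the $\mu$-almost-full intervals $[f_n,g_n]$ in the cited argument. Likewise, total-variation convergence of the laws $\mcL(\vp(t,\cdot)x)\to\mu$ cannot by itself be upgraded to convergence of the random variables themselves, and your claim that the pullback limits exist $\P$-a.s.\ is stronger than what is available: the paper obtains only convergence in probability, and it identifies $\mu=\mcL(v)$ (hence $v\in V$ a.s.) as a \emph{consequence} of that convergence rather than as an input. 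Finally, your minimality argument needs a repair: since $v$ is random, it is not a limit point of an attracted deterministic interval; the paper instead combines the equilibrium identity $v(\o)=\vp(t,\t_{-t}\o)v(\t_{-t}\o)$ with the fact that $v\in K_\ve\subseteq V$ on a set of probability at least $1-\ve$ and that $K_\ve$ lies in a deterministic interval ($H$-solidity), so that $d(v(\o),\td\mcA(\o))\le d(\vp(t,\t_{-t}\o)K_\ve,\td\mcA(\o))$ on that set, which tends to $0$ in probability. With these steps either proved or explicitly delegated to \cite{CS04}, your outline matches the paper's proof; without them it does not yet constitute one.
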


The proof of Theorem \ref{thm:collapse} will rely on a modification of \cite[Proposition 2]{CS04}. In the literature, several results related to Theorem \ref{thm:collapse} are known. In case of strongly monotone SPDE the dissipativity approach to existence and uniqueness of invariant measures (cf.\ e.g.\ \cite[Theorem 4.3.9]{PR07}) may be used to prove the existence of a random attractor consisting of a single random point (cf.\ Theorem \ref{thm:equil}). A similar approach has been taken in \cite{M99} to prove the existence of a stable stationary solution to the stochastic 2D-Navier-Stokes equation with additive noise and large viscosity. Since the dissipativity approach is based on strong monotonicity of the drift, the deterministic counterpart of the SPDE has a unique stable invariant solution and thus no regularization due to noise is observed.

In \cite{CCLR07} it is shown that the random attractor corresponding to the one-dimensional Chafee-Infante equation perturbed by non-degenerate additive noise
\begin{align*}
   dX_t &= (\D X_t + X_t - X_t^3) dt + dW_t,\quad \text{on } \mcO = [0,L], \text{ some } L > 0,\\
   X_t &\equiv 0,\quad \text{on } \partial\mcO,
\end{align*}
consists of a single random point. The proof relies on a known, rather strong bound on the random attractor $\mcA$, namely its order-boundedness: There are random variables $\underline{a},\overline{a}$ in $L^2(\mcO)$ such that
  $$\underline{a}(\o) \le \mcA(\o) \le \overline{a}(\o),\quad \forall \o \in \O.$$
Such a property is not known for the SPDE considered in this paper. On the other hand, the method used in \cite{CCLR07} only requires uniqueness of the invariant Markov measure, while we need to suppose that the RDS is strongly mixing. 

For strongly mixing, white noise RDS $\vp$ satisfying an asymptotic, compact absorption property it is shown in \cite{KS04} that there is a minimal random point attractor given by
  $$ \mcA(\o) = \supp \mu_\o,\quad \P\text{-a.s.},$$
where $\mu_\o := \lim_{t\to\infty}\vp(t,\t_{-t}\o)\mu$ exists $\P$-a.s.\ in the weak$^*$-topology due to \cite{C91}. If $\vp$ is order preserving and $\mu(V) = 1$ then we obtain by Theorem \ref{thm:collapse} that $\supp \mu_\o$ is single-valued, in other words $\mu_\o = \d_{v(\o)}$. 

For related results for SDE we refer to \cite{C00,CF98} and the references therein.

\section{Examples}\label{sec:appl}

\subsection{Stochastic Generalized \texorpdfstring{$p$}{p}-Laplace Equation}\label{ssec:PLpl}

In consistency with our general results, $\a$ will take the role of $p$. Let $(M,g,\nu)$ be a $d$-dimensional weighted compact smooth Riemannian manifold with boundary equipped with Riemannian metric $g$, associated volume measure $\mu$ and $d\nu(x) := \s(x) d\mu(x)$ with $\s$ being a smooth, positive function on $M$. Further, let $\a > 2$ and $V := W_0^{1,\a}(M,\nu) \subseteq H := L^2(M,\nu)$, which is a compact embedding. We denote the inner product on $T_xM$ given by the Riemannian metric $g$ by $(\cdot,\cdot)_x$ and the associated norm by $|\cdot|_x$. Let $W_t$ be an $H=L^2(M,\nu)$-valued Wiener process. 
We consider the degenerate $p$-Laplace equation
  \begin{equation}\label{eqn:SpLP}
     dX_t = \left( \div_\nu(\Phi(\nabla X_t)) + G(X_t) + g(t) \right)dt + dW_t + \mu X_t \circ d\b_t,
  \end{equation}
  where $\Phi: M \times TM \times \O \to TM$ is measurable, $\Phi(x,\cdot,\o):T_xM \to T_xM$ is continuous and
  \begin{equation*}\begin{split}
     (\Phi(x,\xi,\o)-\Phi(x,\td\xi,\o),\xi-\td \xi)_x &\ge \l(\o)|\xi-\td\xi|_x^\a \\
     (\Phi(x,\xi,\o),\xi)_x &\ge c(\o)|\xi|_x^\a - f(\o)\\
     |\Phi(x,\xi,\o)|_x^\frac{\a}{\a-1} &\le C(\o) |\xi|_x^\a + f(\o), \quad \forall x \in M,\ \xi,\td\xi \in T_xM,\ \o \in \O,
  \end{split}\end{equation*}
  with $f: \O \to \R$ being measurable, $c$ measurable, positive and $C,\l$ measurable, non-negative. In particular, this includes the standard nonlinearity $\Phi(x,\xi) := |\xi|_x^{\a-2}\xi$. Further, let $G:\R\times\O\to\R$ be measurable, pathwise continuous with
  \begin{equation*}\begin{split}
      (G(u,\o)-G(v,\o))(u-v)    &\le C(\o)|u-v|^2 \\
      |G(u,\o)|^\frac{q}{q-1}   &\le C(\o)(1+|u|^q),\quad\forall v,u \in \R,\ \o \in \O,
  \end{split}\end{equation*}
  for some $q \in (1,\a)$ and $g: \R \times \O \to H$ be measurable, exponentially integrable in $t$. 

  The $p$-Laplace operator then maps $V\times\O \to V^*$ by 
    $$A(v,\o)(w) = -\int_M (\Phi(x,\nabla v,\o),\nabla w)_x d\nu(x),\quad v,w \in V,\ \o \in \O.$$
  We obtain

 \begin{example}[Generalized Stochastic $p$-Laplace Equation]\label{exam:SpLE}
    There is an associated compact stochastic flow $S(t,s;\o)$ to \eqref{eqn:SpLP} with a measurable, random $\mcD$-attractor $\mcA$. If $g \equiv 0$ then $S(t,s;\o)$ is a cocycle and there is a measurable, strictly stationary random $\mcD^b$-attractor $\mcA^b$. If $\l(\o) > 0$ and $G \equiv 0$ then $\mcA$ consists of a single random point.
\end{example}

\begin{remark}\label{rmk:SPLE}
  In comparison to our results, in \cite{GLR11} the existence of a random attractor for 
    $$dX_t= \left( \div\left(|\nabla X_t|^{\a-2}\nabla X_t\right) + G(X_t) \right)dt + dW_t$$
 with Neumann boundary conditions on convex, open, bounded domains $\mcO \subseteq \R^d$ and noise taking values in $W^{3,2}(\mcO)$ has been proven.

  In \cite{ZL11} the existence of a random attractor for similar equations of the form
    \[ dX_t = \left(-\D\left(|\D X_t|^{\a-2}\D X_t \right) + |X_t|^q X_t \right)dt + W_t, \]
  with Dirichlet boundary conditions for $X_t$ and $\nabla X_t$, $2 \leq q \leq \a$ has been obtained for Wiener noise $W$ in $W_0^{4,\a}(\mcO)$.

  Our results yield the existence of a random attractor for Wiener noise in $L^2(\mcO)$ and for more general nonlinearities (with assumptions analogous to those in Example \ref{exam:SpLE}).
\end{remark}

In the following we will consider the standard $p$-Laplace equation perturbed by a linear reaction term and additive or real linear multiplicative noise with Dirichlet boundary conditions, i.e.
\begin{equation}\label{eqn:SpLP-2}
  dX_t = \div \left( |\nabla X_t|^{\a-2}\nabla X_t + \eta X_t \right)dt  + \mu X_t \circ d\b_t + \s dW_t,
\end{equation}
on a bounded, smooth domain $\mcO \subseteq \R^d$, with $\eta > 0$, $\mu \ge 0$, $\s \ge 0$ and $\a > 2$. Let $\vp$ be the RDS corresponding to \eqref{eqn:SpLP-2}, given by $\vp(t;\o)x = S(t,0;\o)x$.

\subsubsection{Infinite dimensional random attractors}

In the case $\s = 0$, i.e.\ if no additive noise is present, we will now prove that each weak interval random attractor is infinite dimensional. 

A precompact set $\mcA \subseteq H$ can be covered by a finite number of balls of radius $\ve$ for each $\ve > 0$. Let $N_\ve(\mcA)$ be the minimal number of such balls. Then, the Kolmogorov $\ve$-entropy of $\mcA$ is defined by
  $$\mathbbm{H}_\ve(\mcA) := \ln(N_\ve(\mcA)).$$
The fractal dimension of $\mcA$ is defined by
  $$ d_f(\mcA) = \limsup_{\ve \to 0} \frac{\mathbbm{H}_\ve(\mcA)}{ln(\frac{1}{\ve})}. $$
We obtain

\begin{theorem}\label{thm:infinite_dim_attr}
  Let $\mcA$ be a weak interval random attractor for $\vp$. Then, the Kolmogorov $\ve$-entropy of $\mcA$ is bounded below by 
    $$ \mathbbm{H}_\d (\mcA(\o)) \ge C(\o) \d^{\frac{-d(\a-2)}{d(\a-2)+\a}},\ \P-a.s.,$$
  where $C(\o) > 0$ is a constant which may depend on $\a, d$. In particular, the fractal dimension $d_f(\mcA(\o))$ is infinite $\P$-almost surely.
\end{theorem}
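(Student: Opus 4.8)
The plan is to exploit the fact that, in the absence of additive noise ($\s=0$), the equation \eqref{eqn:SpLP-2} with the conjugation $Z(t,s;\o)x = e^{-\mu z_t}X(t,s;\o)x$ becomes (up to the harmless multiplicative factor $\mu_t$ and the lower-order linear term $\eta$) the \emph{deterministic} degenerate $p$-Laplace equation, whose attractor is known to be infinite-dimensional. Since a weak interval random attractor $\mcA$ must attract all deterministic intervals $[x,y]$ in probability and is invariant, it must in particular contain (the pullback/push-forward of) a large finite-dimensional piece of the deterministic attractor for the conjugated flow, and conjugation by the uniformly continuous, invertible maps $T(t,\o)$ changes the $\ve$-entropy only up to constants. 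So the real content is a lower bound on $\mathbbm{H}_\ve$ of an invariant set for the (essentially deterministic) degenerate $p$-Laplace flow.

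The key steps, in order, are as follows. \emph{Step 1.} Record that with $\s=0$ the random PDE \eqref{eqn:transformed_spde} reduces to an autonomous-in-$\o$ equation: writing $\gamma_r := \mu z_r(\o)$, the process $Z(t,s;\o)x$ solves $\dot Z = \mu_r A(\mu_r^{-1}Z) + \eta\,\text{(linear)} + \gamma_r Z$, and a further substitution $Y_t := e^{-\int_0^t\gamma_r dr}Z_t$ (or simply working with $X$ directly, since no additive noise enters) puts us in the setting of the deterministic $p$-Laplace semigroup on $H=L^2(\mcO)$, for which a compact global attractor $\mcA_{\det}$ exists and is known to have infinite fractal dimension with the stated entropy lower bound $\mathbbm{H}_\d(\mcA_{\det}) \ge C\d^{-d(\a-2)/(d(\a-2)+\a)}$ — this is the classical result of the deterministic literature cited in the introduction (\cite{CCD99} and related), obtained by constructing, for each small $\d$, an exponentially (in $\d^{-\text{power}}$) large family of steady states or of points of the attractor that are $\d$-separated, using the slow (algebraic, not exponential) decay characteristic of the degenerate diffusion. \emph{Step 2.} Transfer this to the RDS: by Theorem \ref{thm:conj_attractor} and Proposition \ref{prop:def_conj_flow}, $\mcA(t,\o) \supseteq T(t,\o)\,\mcA_{Z}(t,\o)$ where $\mcA_Z$ is the attractor of $Z$; since $\mcA_Z$ contains $\mcA_{\det}$ (again after undoing the scalar time-change, which is a homeomorphism of $H$ commuting with the entropy estimate up to constants), and $T(t,\o)=\mu_t(\o)\,\mathrm{id}$ is a nonzero scalar multiple of the identity (here $u\equiv 0$ because there is no additive noise), we get $\mathbbm{H}_\d(\mcA(t,\o)) \ge \mathbbm{H}_{\d/|\mu_t(\o)|}(\mcA_{\det}) \ge C(\o)\d^{-d(\a-2)/(d(\a-2)+\a)}$. \emph{Step 3.} Handle the weak-attractor subtlety: a \emph{weak} interval random attractor only attracts in probability and is only invariant $\P$-a.s., so one must argue that it still contains the relevant deterministic piece. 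Here one uses Remark \ref{rmk:interval=point}/the order-preserving structure together with invariance $\vp(t,\o)\mcA(\o)=\mcA(\t_t\o)$: any invariant random closed set attracting all deterministic intervals must, by a pullback argument, dominate the $\o$-limit set of a large order-interval, which contains the deterministic attractor; alternatively, one shows any such $\mcA$ contains all random equilibria, and the deterministic stationary solutions (of which there are a $\d$-separated family of exponential cardinality) give random equilibria via the conjugation.

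The main obstacle I expect is \emph{Step 3} — making precise that a weak interval random attractor, with only attraction-in-probability and a.s.\ invariance, is forced to contain the full $\d$-separated family coming from the deterministic attractor. The entropy lower bound for the deterministic degenerate $p$-Laplace attractor (Step 1) is essentially quotable from the cited literature, and the conjugation transfer (Step 2) is routine once one observes $T(t,\o)$ is scalar; but bridging "weak" attraction to an honest lower bound on the minimal such attractor requires either an order-theoretic squeezing argument (using normality and $H$-solidity of the cone, exactly the hypotheses set up before Theorem \ref{thm:collapse}) or a careful pullback/$\o$-limit-set computation showing the deterministic steady states are genuine invariant points of the RDS and hence lie in every weak interval random attractor. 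I would carry this out via the random-equilibrium route: each deterministic equilibrium $e$ gives, through the stationary conjugation, a strictly stationary solution of \eqref{eqn:SpLP-2}, hence a random point in $\mcA(\o)$; the $\d$-separation of the $e$'s (after dividing by the bounded-below factor $|\mu_0(\o)|^{-1}$) then yields the claimed entropy bound and therefore $d_f(\mcA(\o))=\infty$ $\P$-a.s.
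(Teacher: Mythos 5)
Your proposal contains a genuine gap at its core: the reduction to \emph{the} deterministic autonomous $p$-Laplace semigroup and its global attractor is not available when $\mu \ne 0$, and the theorem is stated for \eqref{eqn:SpLP-2} with $\s=0$ but arbitrary multiplicative noise. After cancelling the noise and the $\eta$-term, the conjugated equation still carries the random, time-dependent factor $e^{(\a-2)(\mu\b_t(\o)+\eta t)}$ in front of the nonlinear operator; your further substitution $Y_t = e^{-\int_0^t\gamma_r dr}Z_t$ cannot remove it, because the $p$-Laplace operator is $(\a-1)$-homogeneous while the substitution is linear. The only way to reach the deterministic equation is the random time change $F(t,\o) = -\int_t^0 e^{(\a-2)(\mu\b_r(\o)+\eta r)}dr$, and this maps the infinite past $(-\infty,0]$ onto a \emph{finite} random interval $(s_0(\o),0]$. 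Consequently there is no deterministic global attractor $\mcA_{\mathrm{det}}$ sitting inside $\mcA_Z$ or $\mcA(\o)$: the claim ``$\mcA_Z \supseteq \mcA_{\mathrm{det}}$ after undoing the time change'' is unjustified (a time change acts on the time axis, not on $H$, and the deterministic attractor involves $t\to\infty$ limits that have no counterpart on the finite rescaled interval). Two further inaccuracies: Theorem \ref{thm:conj_attractor} only says that $T\mcA_Z$ \emph{is} an attractor for $S$, not that an arbitrary weak interval random attractor contains it; and the infinite-dimensionality of the deterministic attractor is not in \cite{CCD99} — the unperturbed $p$-Laplace attractor is $\{0\}$, and the relevant entropy bound for the $\eta$-perturbed equation is the Efendiev--Zelik result \cite{EZ08}, which the paper adapts rather than quotes.

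Your proposed repair of Step 3 via equilibria fails for the same homogeneity reason: a deterministic steady state $e$ of $\mathrm{div}(|\nabla e|^{\a-2}\nabla e)+\eta e = 0$ does \emph{not} become a strictly stationary solution of \eqref{eqn:SpLP-2} under the scalar conjugation $T(t,\o)y = e^{-\mu z_t(\o)}y$ (with $\s=0$ the additive corrector vanishes); plugging $X_t = e^{\mu z_t}e$ into the equation leaves an uncancelled factor $e^{(\a-2)\mu z_t}$ on the nonlinear term, so these are not random fixed points of $\vp$. The paper's actual route avoids both problems: it first observes that $\mcM^+(0,\o) \subseteq \mcA(\o)$ follows solely from the attraction (in probability) of deterministic order intervals — which already disposes of your ``weak attractor'' worry without any minimality or equilibrium argument — and then constructs, for a.e.\ $\o$, exponentially many $\d$-separated elements of $\mcM^+(0,\o)$ by solving, in the rescaled finite time variable, the deterministic equation started at $s_0(\o)$ from sums of Barenblatt solutions with disjoint small supports centered at $\gtrsim \ve^{-d}$ points of $\mcO$; order-boundedness of these trajectories together with $e^{\mu\b_{-t}(\o)-\eta t}\to 0$ makes them admissible backward orbits through $0$, and the $L^2$-separation (of order $M(\ve)$) of the $2^{|R_\ve|}$ terminal values yields the stated entropy bound. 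In short, the Barenblatt/finite-speed-of-propagation construction on the compressed time interval is not an optional shortcut in the paper — it is precisely what substitutes for the deterministic attractor your argument presupposes.
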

\begin{proof}
  The proof is inspired by \cite[Theorem 4.1]{EZ08}. In order to prove the lower bound on the Kolmogorov $\ve$-entropy we consider the unstable manifold of the equilibrium point $0$ defined by
  \begin{align*}
    \mcM^+(0,\o) := \{& u_0 \in H\ |\ \exists \text{ order bounded function } u: (-\infty,0] \to H, \text{ such that}\\ &\vp(t;\t_{-t}\o)u(-t) = u_0
                                    \text{ for all } t \ge 0 \text{ and } \|u(t)\|_H \to 0 \text{ for } t \to -\infty \}.
  \end{align*}
  Since $\mcA(\o)$ attracts all deterministic, order bounded sets we have
    $$ \mcM^+(0,\o) \subseteq \mcA(\o),\quad \P-\text{a.s.}$$
  Therefore, it is sufficient to derive a lower bound on the Kolmogorov $\ve$-entropy for the unstable manifold of $0$. The construction of appropriate elements in $\mcM^+(0,\o)$ will be based on a suitable time scaling of Barenblatt solutions with sufficiently small initial supports.

  First, we transform \eqref{eqn:SpLP-2} into a random PDE. Let 
    $$\bar Z(t,s;\o)x := e^{-\mu\b_t(\o)-\eta t}S(t,s;\o)\left( e^{\mu\b_s(\o)+\eta s}x \right),$$
  where $S(t,s;\o)$ is the stochastic flow associated to \eqref{eqn:SpLP-2} as constructed in Theorem \ref{thm:generation}. Then $\bar Z$ solves
  \begin{equation}\begin{split}\label{eqn:pLp_alt_transf}
      \frac{d}{dt} \bar Z(t,s;\o)x &= e^{(\a-2)(\mu\b_t(\o)+\eta t)} \div\left(|\nabla\bar Z(t,s;\o)x|^{\a-2} \nabla\bar Z(t,s;\o)x\right),\\
      \bar Z(s,s;\o)x &= x,
  \end{split}\end{equation}
  for a.e.\ $t \ge s$. In order to construct an element $u_0 \in \mcM^+(0,\o)$ we need to find an order bounded function $u: (-\infty,0] \to H$ converging to $0$ such that 
    $$u_0 = \vp(t;\t_{-t}\o)u(-t) = S(0,-t;\o)u(-t) = \bar Z(0,-t;\o)\left(e^{-\mu\b_{-t}(\o)+\eta t} u(-t) \right),$$
  for all $t \ge 0$. Since $e^{\mu\b_{-t}(\o)-\eta t} \to 0$ for $t \to \infty$ it is enough to find an order bounded (thus bounded) function $v: (-\infty,0] \to H$ such that $u_0 = \bar Z(0,-t;\o)v(-t).$

  In the following we restrict onto a subset of full $\P$-measure on which the law of iterated logarithm holds, i.e. $\o \in \O$ for which
     $$ \lim_{t \to \pm \infty} \frac{\b_t(\o)}{\sqrt{t \log \log t}} = \sqrt{2}. $$
  In order to find such an order bounded function $v$, we use a time scaling to transform \eqref{eqn:pLp_alt_transf} into the standard $p$-Laplace equation on a finite time interval. Let 
   $$f(t,\o) := e^{(\a-2)(\mu \b_t(\o)+\eta t)}, \quad F(t,\o) := -\int_t^0  f(r,\o) dr \in C^1(\R_-)$$
  and $s_0(\o) := -\lim_{t \to -\infty} F(t,\o) $. Since $F$ is strictly increasing we may define
    $$ g(t,\o) := F^{-1}(t,\o),\quad t \in (s_0(\o),0]. $$
  For ease of notation we will suppress the $\o$-dependency of the time scaling $g$ in the following. We note $g \in C^1(s_0,0)$ with $g'(t) = \frac{1}{f(g(t))} > 0$, $g(0) = 0$ and $g(t) \to -\infty$ for $t \to s_0$. Let $U(t,s;\o)x := \bar Z(g(t),g(s);\o)x$. Then
  \begin{equation}\begin{split}\label{eqn:det_pLp}
    \frac{d}{dt} U(t,s;\o)x 
    &= g'(t) f(g(t)) \div\left(|\nabla U(t,s;\o)x|^{\a-2} \nabla U(t,s;\o)x\right) \\
    &= \div\left(|\nabla U(t,s;\o)x|^{\a-2} \nabla U(t,s;\o)x\right),
  \end{split}\end{equation}
  for $\text{ a.e. } t \ge s,\ t,s \in (s_0,0]$. It is well known that for each $x \in L^\infty(\mcO)$ there is a unique solution $S(t) x$ to \eqref{eqn:det_pLp} with $S(s_0)x = x$ and $\sup_{t \in [s_0,0]} \|S(t) x\|_\infty < \infty.$ In particular, $S(t)x$ is order bounded in $H$. Uniqueness of \eqref{eqn:det_pLp} then implies 
    $$S(t)x = S(t-s)S(s)x = U(t,s;\o)S(s)x,\quad \forall s \in (s_0,t].$$
  Hence, $v(t) := S(F(t))x: (-\infty,0] \to H$ is order bounded in $H$,
  \begin{align*}
    \bar Z(0,-t;\o)v(-t) 
    &= U(0,F(-t);\o)S(F(-t))x \\
    &= S(0)x = v(0)
  \end{align*} 
  and consequently $v(0) \in \mcM^+(0,\o)$. 

  In order to use this construction of elements $v(0) \in \mcM^+(0,\o)$ to derive a lower bound on the Kolmogorov $\ve$-entropy of $\mcM^+(0,\o)$ we consider special solutions to \eqref{eqn:det_pLp} so that the final values $v(0)=S(0)x$ are sufficiently far apart (w.r.t.\ the $H$-norm).  Recall that the Barenblatt solutions, given by
    $$ U(t,\xi) := t^{-k} \left( C(M)-q|\xi|^\frac{\a}{\a-1}t^{\frac{-k\a}{d(\a-1)}}\right)_+^\frac{\a-1}{\a-2},$$
  with $k=\frac{1}{\a-2+\frac{\a}{d}}$, $q = \frac{\a-2}{\a} \left(\frac{k}{d}\right)^\frac{1}{\a-1}$ solve \eqref{eqn:det_pLp} with  
    $$ \supp U(t,\cdot) \subseteq B\left(t^\frac{k}{d}\left( \frac{C(M)}{q} \right)^\frac{\a-1}{\a},0\right).$$
  Here $C(M) = C(\a,d)M^\frac{\a(\a-2)k}{d(\a-1)}$ is a constant scaling with the total mass $M=\|U(t)\|_{L^1(\mcO)}$. For $\ve > 0$ small enough we can find a finite set $R_\ve = \{\xi_i\} \subseteq \mcO$ such that
  \begin{align*}
     B(\ve,\xi_i) \cap B(\ve,\xi_j)     &= \emptyset, \quad \text{for } i \ne j \\
     |R_\ve|                            &\ge C \ve^{-d}\\
     B(\ve,\xi_i)                       &\subset \mcO, \quad \forall i.
  \end{align*}
  Choosing $M$ small enough (i.e.\ $M = C(\a,d) \left(\ve s_0^{-\frac{k}{d}} \right)^\frac{d}{k(\a-2)}$) we can thus construct solutions to \eqref{eqn:det_pLp} based on the Barenblatt solutions by
    $$ U^{m}(t,\xi) = \sum_{i=1}^{|R_\ve|} m_i U(t-s_0(\o),\xi-\xi_i), $$
  for each $m \in \{0,1\}^{|R_\ve|}$. By the choice of $C(M)$, $U^{m}$ satisfies Dirichlet boundary conditions. For $m_1 \ne m_2$ we observe
    $$ \|U^{m_1}(0,\cdot) - U^{m_2}(0,\cdot)\|_H \ge M =  C(\a,d) \left(\ve s_0^{-\frac{k}{d}} \right)^\frac{d}{k(\a-2)}.$$
  Hence,
    $$ \mathbbm{H}_\d (\mcA(\o)) \ge \mathbbm{H}_\d (\mcM^+(0,\o)) \ge \log_2 2^{|R_\d|} \ge C(\o) \d^{-(\a-2)k}, \P-a.s.$$
  and   
    $$ d_f(\mcA(\o)) \ge d_f(\mcM^+(0,\o)) = \limsup_{\d \to 0} \frac{\mathbbm{H}_\d (\mcM^+(0,\o))}{log_2(\frac{1}{\d})} = \infty, \P-a.s.$$
\end{proof}

\subsubsection{Regularization by additive noise}

We now consider \eqref{eqn:SpLP-2} with purely additive noise ($\mu = 0$), i.e.
  $$ dX_t = \div \left( |\nabla X_t|^{\a-2}\nabla X_t + \eta X_t \right)dt + \s dW_t. $$
By Theorem \ref{thm:infinite_dim_attr} we know that for $\s = 0$ the weak (random) interval attractor is infinite dimensional. We will now use Theorem \ref{thm:collapse} to prove that the random attractor is zero dimensional if the additive noise is non-degenerate. Let $Q^\frac{1}{2} \in L(H)$ be injective and define
\begin{equation*} 
  \|u\|_{Q^\frac{1}{2}} :=
    \begin{cases}
       \|y\|_U, & Q^\frac{1}{2} y = u \\
       \infty, & \text{otherwise.}
    \end{cases} 
\end{equation*}
We obtain

\begin{corollary}\label{cor:collapse}
   Assume $d \le 2$ and 
    $$ \|u\|_V^\a \ge c \|u\|_{Q^\frac{1}{2}}^\s\|u\|_H^{\a-\s}, \quad \forall u \in V, $$
   for some $\s \ge 2$, $\s > \a-2$. Then there is a unique minimal weak interval attractor $\mcA$ consisting of a single random point, i.e.
    $$ \mcA(\o) = \{v(\o)\}, $$
   for some strictly stationary $v: \O \to H$ with $v \in V$, $\P$-almost surely.
\end{corollary}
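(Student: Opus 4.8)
The plan is to deduce the statement from Theorem~\ref{thm:collapse}, so the work reduces to checking that the RDS $\vp$ associated with \eqref{eqn:SpLP-2} for $\mu=0$ is order preserving, strongly mixing, and has an ergodic measure concentrated on $V$, together with the standing cone assumptions of Section~\ref{sec:str_mix}. Equation \eqref{eqn:SpLP-2} with $\mu=0$ is a special case of the generalized stochastic $p$-Laplace equation of Example~\ref{exam:SpLE} (take $\Phi(\xi)=|\xi|^{\a-2}\xi$, reaction term $u\mapsto\eta u$, no external forcing), so $(A1)$--$(A5)$ and $(V)$ hold, Theorem~\ref{thm:generation} yields a continuous, measurable cocycle RDS $\vp$ on $H=L^2(\mcO)$, and the associated Markov semigroup $P_tf(x):=\E f(\vp(t;\cdot)x)$ is well defined. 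For the order structure I would take $H_+$ to be the cone of nonnegative functions in $L^2(\mcO)$ and ${}_+$ the positive-part map; $H_+$ is normal. Here the restriction $d\le 2$ enters: since $\a>2\ge d$, Morrey's embedding gives $V=W_0^{1,\a}(\mcO)\hookrightarrow C(\overline{\mcO})$, so every bounded --- in particular every compact --- subset of $V$ is order bounded in $H$, i.e.\ $V_+=V\cap H_+$ is $H$-solid, and the standing assumptions of Section~\ref{sec:str_mix} are satisfied.

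Order preservation: for $x\le y$ I would show $\vp(t;\o)x\le\vp(t;\o)y$ by a pathwise comparison argument. Since $\mu=0$, the conjugation of Section~\ref{sec:main_result} is the purely additive $T(t,\o)y=y-u_t(\o)$, so in the difference $w:=S(t,s;\o)x-S(t,s;\o)y$ the additive terms cancel; testing the resulting random PDE against $w_+$ and using the $T$-monotonicity of the $p$-Laplacian together with the linear term, ${}_{V^*}\langle A(v_1)-A(v_2),(v_1-v_2)_+\rangle_V\le \eta\|(v_1-v_2)_+\|_H^2$, Gronwall's lemma gives $w_+(t)=0$ for all $t\ge s$. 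Hence $\vp$ is order preserving, and by Remark~\ref{rmk:interval=point} its weak point and weak interval random attractors coincide once mixing is known.

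Ergodicity: from the coercivity $(A3)$ with $\a>2$ (so the $-c\|\cdot\|_V^\a$ term dominates $C\|\cdot\|_H^2$ for large $H$-norm) one obtains uniform-in-time $L^2$-bounds for solutions, and together with the compact embedding $V\subseteq H$ the Krylov--Bogolyubov method produces an invariant probability measure $\mu$; applying It\^o's formula to $\|X_t\|_H^2$ and using stationarity then gives $\E_\mu\|X\|_V^\a<\infty$, so $\mu(V)=1$. The hypothesis $\|u\|_V^\a\ge c\|u\|_{Q^{1/2}}^\s\|u\|_H^{\a-\s}$ in particular forces $\|u\|_{Q^{1/2}}<\infty$ for every $u\in V$, i.e.\ the noise is non-degenerate in all $V$-directions; this is what should be used to obtain (i) irreducibility of $(P_t)$, by an approximate-controllability argument steering \eqref{eqn:SpLP-2} from any initial point into an arbitrary neighbourhood of any target in $H$, and (ii) convergence of all transition laws to $\mu$ in total variation --- via the strong Feller property proved through a Bismut--Elworthy--Li gradient estimate (with $\s\ge2$, $\s>\a-2$ and $d\le2$ being exactly what makes that estimate integrable in time), or via an asymptotic-coupling argument, combined with Doob's theorem. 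This makes $\mu$ the unique invariant measure, $\vp$ strongly mixing with ergodic measure $\mu$, and $\mu(V)=1$.

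Given all this, Theorem~\ref{thm:collapse} applies and produces the unique minimal weak point/interval random attractor $\mcA(\o)=\{v(\o)\}$ with $v\in V$ $\P$-a.s. Since $\mu=0$ and there is no external forcing, $\vp$ is a cocycle, so invariance of $\mcA$ forces $\vp(t;\o)v(\o)=v(\t_t\o)$ $\P$-a.s.; hence $v$ is a strictly stationary random fixed point (one may pass to a genuinely strictly stationary, indistinguishable version as in Proposition~\ref{prop:stat_perfect}). I expect the ergodicity step, and specifically the total-variation mixing, to be the main obstacle: establishing the strong Feller property (or a workable substitute) for the degenerate, non-uniformly-elliptic operator $-\div(|\nabla\cdot|^{\a-2}\nabla\cdot)$ is delicate, and this is precisely where both the dimension restriction $d\le2$ and the interpolation inequality between $\|\cdot\|_V$, $\|\cdot\|_{Q^{1/2}}$ and $\|\cdot\|_H$ are essential --- they must be strong enough to push a gradient/coupling estimate through the degeneracy while staying compatible with the monotone variational framework.
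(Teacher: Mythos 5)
Your overall architecture is the same as the paper's: reduce everything to Theorem \ref{thm:collapse} by checking the cone assumptions (normality, $H$-solidness of $V_+$ via $d\le 2$ and $V=W_0^{1,\a}(\mcO)\hookrightarrow C^0(\mcO)$), order preservation, and strong mixing with an ergodic measure concentrated on $V$. The genuine gap is in the mixing step, which you yourself flag as the main obstacle but then only sketch. The paper settles it in one stroke by citing \cite[Theorem 1.4]{L09}: under exactly the assumed interpolation inequality $\|u\|_V^\a \ge c\|u\|_{Q^{1/2}}^\s\|u\|_H^{\a-\s}$ with $\s\ge 2$, $\s>\a-2$, a dimension-free Harnack inequality for monotone SPDE holds, and this yields that $P_t^*\d_x \to \mu$ in total variation with $\mu(V)=1$. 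Your substitute programme (Krylov--Bogolyubov, approximate controllability, strong Feller via a Bismut--Elworthy--Li gradient estimate or asymptotic coupling, then Doob's theorem) is not a proof, and its central ingredient is doubtful for this equation: a BEL-type formula requires differentiability of $x\mapsto\vp(t;\o)x$ and control of the linearization against $Q^{1/2}$, neither of which is available for the degenerate drift $\div(|\nabla\cdot|^{\a-2}\nabla\cdot)$; the known route for this class is precisely the coupling/Girsanov proof of the Harnack inequality in \cite{L09}, and that is what the hypothesis on $\s$ is tailored to. You also misplace the role of $d\le 2$: it is not needed for the mixing (the cited result is dimension-free); in the paper it enters only to get $V\hookrightarrow C^0(\mcO)$ and hence $H$-solidness of $V_+$, which you had already used correctly in the first step.

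On order preservation your route differs from the paper's: you test the difference of two solutions with its positive part and use T-monotonicity of the $p$-Laplacian plus Gronwall, whereas the paper approximates $\Phi$ by the non-degenerate $\Phi^\ve(\xi)=\xi|\xi|^{\a-2}+\ve\xi$, smooths the stationary solution by the Galerkin approximants of \cite{G11}, applies classical comparison theorems for quasilinear parabolic equations \cite{LSU67}, and passes to the limit using closedness of $H_+$. Your direct argument is plausible but needs the chain rule $\frac{d}{dt}\|w_+\|_H^2 = 2\,\langle \dot w, w_+\rangle$ for $w\in L^\a_{loc}(V)\cap C(H)$ with $V^*$-valued time derivative (a Stampacchia-type truncation lemma in the Gelfand triple), which is presumably why the paper goes through smooth approximations; this is a repairable technical point, minor compared with the missing total-variation mixing argument.
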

\begin{proof}
  Let $H_+ \subseteq H$ be the cone of all $x \in H$ such that $x(\xi) \ge 0$ for almost all $\xi \in \mcO$. Then $H_+$ is normal and we can define the positive part of $x \in H$ by $x_+(\xi) := \max(x(\xi),0)$. Note that $V=W^{1,\a}_0(\mcO) \hookrightarrow C^0(\mcO)$ since $d \le 2$ and $\a \ge 2$. Therefore, bounded sets in $V$ are uniformly bounded and thus are contained in some interval in $H$. Consequently, $V_+$ is $H$-solid.

  By \cite[Theorem 1.4]{L09} the associated Markovian semigroup $P_tf(x) := \E f(\vp(t;\cdot)x)$ is strongly mixing with respect to some invariant measure $\mu$, i.e.
    $$ \mcL(\vp(t;\cdot)x) =  P_t^*\d_x \xrightarrow{TV} \mu,$$
  with $\mu(V) = 1$.

  The proof that $\vp$ is order-preserving proceeds similar to \cite{G11d}. More precisely, we consider a non-degenerate approximation $\Phi^\ve(\xi) = \xi |\xi|^{\a-2} + \ve \xi$ of $\Phi(\xi) = \xi |\xi|^{\a-2}$ and a smooth approximation of the strictly stationary solution $u_t$ given by $u_t^n := \mcP_n u_t$, where $\mcP_n$ are the nonlinear Galerkin approximants as in \cite{G11}. Let $Z^{n,\ve}x$ denote the corresponding approximating solution with initial condition $x \in C^2(\mcO)$:
    $$ dZ^{n,\ve}x = \div\left(\Phi^\ve(Z^{n,\ve}x+u^n)\right)dt.$$
  Unique existence of $Z^{n,\ve}x$ follows from standard results and comparison results for second order parabolic quasilinear equations (cf.\ e.g.\ \cite{LSU67}) yield
    $$Z^{n,\ve}x \le Z^{n,\ve}y,$$
  for all $x,y \in C^2(\mcO)$ with $x(\xi) \le y(\xi)$, for all $\xi \in \mcO$. Since the cone $H_+$ is closed and by the choice of approximation we conclude that $\vp$ is order-preserving.

  The claim now follows from Theorem \ref{thm:collapse}.
\end{proof}

\subsection{Stochastic Reaction Diffusion Equations}

Let $(E,\mcB,m)$ be a finite measure space with countably generated $\s$-algebra $\mcB$ and let $(L,\mcD(L))$ be a strictly negative-definite (i.e.\ $(Lv,v)_{L^2(m)} \le -c\|v\|^2_{L^2(m)},\ \forall v \in \mcD(L),\ \text{some } c > 0$), self-adjoint operator on $H := L^2(m)$. We set $m(fg) := \int_E fg\ dm$, for $fg \in L^1(m)$, $(f,g) := (f,g)_{L^2(m)}$ and $\|f\| := \|f\|_{L^2(m)}$. The corresponding Dirichlet space $V = \mcD(\mcE)$ is a Hilbert space with inner product $(v,w)_V := \mcE(v,w) := m(\sqrt{-L}v\sqrt{-L}w)$, $v,w \in V$. We consider the  triple
    $$ V:= \mcD(\mcE) \subseteq H := L^2(m) \subseteq V^*.$$
and assume that  $V \subseteq H$ is compact. 

Let $G: \R \times \R \times \O \to \R$ be measurable, $G(t,\cdot,\o)$ continuous for all $(t,\o) \in \R \times \O$ and
\begin{equation}\label{eqn:reaction_term}\begin{split}
  \left( G(t,u,\o)-G(t,v,\o) \right) (u-v)      &\le \l(\o) |u-v|^2 \\
  |G(t,u,\o)|^2                                 &\le  C(\o) |u|^2+f(t,\o),
\end{split}\end{equation}
with coefficient $\sqrt{C(\o)} < c$ and $f(\cdot,\o)$ being exponentially integrable. With $M(v) := Lv$ we obtain

\begin{example}\label{ex:srde}
 The stochastic flow associated to the stochastic reaction-diffusion equation
 \begin{equation}\label{rd}
    d X_t=(L X_t + G(t,X_t)) dt + dW_t + \mu X_t \circ d\b_t,
 \end{equation}
where $W$ is an $H=L^2(m)$-valued Wiener process, has a measurable random $\mcD^2$-attractor $\mcA$. If $G$ is strictly stationary, then there is a measurable, strictly stationary random $\mcD^b$-attractor $\mcA^b$. If $\l(\o) \le 0$, then $\mcA$ consists of a single random point. 
\end{example}

\begin{remark}\label{rmk:RDE}
  In Example \ref{ex:srde} we had to restrict to reaction terms $G$ of at most linear growth due to the restrictions of the variational approach to (S)PDE (cf.\ Appendix \ref{app:var_spde}). However, the main ideas also apply to SRDE with high-order reaction terms, i.e.\ let $\mcO \subseteq \R^d$ be an open, bounded domain and consider
  \begin{equation}\label{rd2}
    dX_t=(\D X_t - |X_t|^{p-2}X_t + \eta X_t) dt + dW_t + \mu X_t \circ d\b_t,\ (p > 2).
  \end{equation}     
  In the construction of strictly stationary solutions the variational methods have to be replaced by a mild approach to SPDE (cf.\ e.g.\ \cite{DPZ92}) where we choose
    $$M(v) := \D v - |v|^{p-2}v $$
  corresponding to the triple
    $$ V:=W^{1,2}_0(\mcO) \cap L^p(\mcO) \subseteq H:= L^2(\mcO)\subseteq V^*.$$
  Using the strong monotonicity of the operator $M$, strictly stationary solutions can be constructed similar to Theorem \ref{thm:nonlinear_OU}.
  The technique proving compactness of the stochastic flow by compactness of $V \subseteq H$ remains the same, which proves the existence of a random attractor for SRDE of the form \eqref{rd2} where $W$ is a Wiener process taking values in $L^2(\mcO)$. In comparison, in \cite[Section 5]{CF94} the existence of a random attractor for stochastic reaction-diffusion equations perturbed by finite dimensional Wiener noise is obtained under the assumption that the noise takes values in $H^2(\mcO) \cap H^1_0(\mcO)$.
\end{remark}

\subsection{Stochastic Porous Media Equation}

Let $(E,\mcB,m)$ be as above and let $(L,\mcD(L))$ be a negative-definite, self-adjoint operator on $L^2(m)$ with Ker$(L)=\{0\}$. Define $\mcD(\mcE) := \mcD(\sqrt{-L})$, $\mcE(u,v) := (\sqrt{-L}u,\sqrt{-L}v), \text{ for } u,v \in \mcD(\mcE)$ and $\mcF_e$ to be the abstract completion of $\mcD(\mcE)$ with respect to $\|\cdot\|_{\mcF_e}^2 := \mcE(\cdot,\cdot)$.

Let $\Phi: \R \times \R \times \O \to \R$ be measurable with $\Phi(t,0,\o) = 0$, $\Phi(t,\cdot,\o) \in C(\R)$ and
\begin{align*}
  (\Phi(t,r,\o)-\Phi(t,s,\o))(r-s)      &\ge \l(\o)|r-s|^\a\\
  \Phi(t,r,\o)r                         &\ge c(\o)|r|^\a - f(t,\o) \\
  |\Phi(t,r,\o)|^\frac{\a}{\a-1}        &\le C(\o)|r|^\a + f(t,\o),\ \forall s,r,t \in \R, \o \in \O,
\end{align*}
for some $\a > 2$, $c$ positive, $\l,C$ non-negative and $f(\cdot,\o)$ exponentially integrable. In particular, the standard nonlinearity $\Phi(r) := |r|^{\a-2}r$ is included in our general framework. We assume
\begin{enumerate}
 \item[(L)] The embedding $\mcF_e \subseteq L^\frac{\a}{\a-1}(m)$ is compact and dense.
\end{enumerate}
This yields the compact Gelfand triple
  $$ V := L^{\a}(m) \subseteq  H := \mcF_e^* \subseteq V^*.$$
We now present some more concrete examples for which $(L)$ is satisfied.
\begin{example}
  Let
  \begin{enumerate}
    \item $E$ be a smooth, compact Riemannian $d$-dimensional manifold with boundary and $L$ be the Friedrichs extension of a symmetric, uniformly elliptic operator of second order on $L^2(m)$ with Dirichlet boundary conditions. For example, let $L$ be the Dirichlet Laplacian on $E$.
    \item $E \subseteq \R^d$ be an open, bounded domain, $L:=(-\D)^\b$ with its standard domain and $\b \in \frac{d}{2}\left(\frac{2-\a}{\a},1\right) \cap (0,1]$.
  \end{enumerate}
  Then $(L)$ is satisfied.
\end{example}

Choosing $M(v) := L\left(|v|^{\a-2}v\right): V \to V^*$ we obtain
\begin{example}[Stochastic Porous Media Equation] The stochastic flow associated to the stochastic porous media equation
  \begin{equation}\label{PME}
    dX_t= \left(L \Phi(t,X_t) + \eta X_t \right)dt + dW_t + \mu X_t \circ d\b_t,
  \end{equation}
  with $\eta \in \R$ and $W$ being an $H=\mcF_e^*$-valued Wiener process, has a measurable random $\mcD$-attractor $\mcA$.  If $\Phi$ is strictly stationary, then $S(t,s;\o)$ is a cocycle and there is a measurable, strictly stationary random $\mcD^b$-attractor $\mcA^b$. If $\l(\o) > 0 \ge \eta$, then the random attractor consists of a single random point.
\end{example}

\begin{remark}\label{rmk:SPDE}
  In \cite{BGLR10} the existence of random attractors for 
    $$ dX_t = \D\Phi(X_t)dt + dW_t $$
  has been proven for additive Wiener noise in $W_0^{1,\a}(\mcO)$ on open, bounded domains $\mcO \subseteq \R^d$. Here, we only require $W^{-1,2}_0(\mcO)$ regularity of the noise.
\end{remark}

\begin{remark}
  Similar arguments as for the stochastic $p$-Laplace equation show that every weak interval attractor has infinite fractal dimension in the deterministic case with $\eta > 0$ (cf.\ \cite{EZ08}). On the other hand, in the case of sufficiently non-degenerate additive noise the weak interval random attractor collapses to a single random point, which can be shown as in Corollary \ref{cor:collapse}.
\end{remark}

\section{Proofs}


\subsection{Comparison Lemma and a priori bounds}

In \cite{G11d} comparison Lemmata and a priori bounds for singular ODE have been shown. We will now prove similar results for superlinear/degenerate ODE.

\begin{lemma}[Comparison Lemma]\label{lemma:comp_1}
  Let $\b > 1$, $s \le t$, $q \ge 0$, $h \in L^1([s,t])$ nonnegative and $v: [s,t] \to \R_+$ be an absolutely continuous subsolution of
  \begin{equation}\label{eqn:comp_1_1}\begin{split}
    y'(r) &= -h(r)y(r)^\b,\ r \in [s,t] \\
    y(s)  &= q,
  \end{split}\end{equation}
  i.e.\ $v'(r) \le - h(r)v(r)^\b$ for a.e.\ $r \in [s,t]$ and $v(s) \le q$. Then
    \[ v(r) \le \left( q^{-(\b-1)} + (\b-1) \int_s^r h(\tau)d\tau  \right)^{\frac{-1}{\b-1}}, \quad \forall r \in [s,t].\]
\end{lemma}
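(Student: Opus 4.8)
The plan is to separate variables on the region where $v$ is strictly positive, integrate the resulting differential inequality, and handle by hand the (at most one) time at which $v$ reaches zero. First I would note that since $h \ge 0$, $v \ge 0$ and $\b > 1$, the subsolution property gives $v'(r) \le -h(r)v(r)^\b \le 0$ for a.e.\ $r \in [s,t]$, so $v$ is nonincreasing. If $q = 0$, then $v(s) \le 0$ forces $v \equiv 0$, and the claimed bound holds trivially under the convention that the right-hand side is $0$ (reading $q^{-(\b-1)}$ as $+\infty$); the same happens if $v(s)=0$. So I may assume $q>0$ and $v(s)>0$, and set $r^{*} := \inf\{r\in[s,t]:v(r)=0\}$, with $r^{*}:=t$ if $v$ never vanishes. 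By continuity of $v$ we then have $v>0$ on $[s,r^{*})$.

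Next I would work with $w(r) := v(r)^{-(\b-1)}$ on $[s,r^{*})$. On every compact subinterval $[s,r']\subset[s,r^{*})$ the continuous function $v$ is bounded below by a positive constant, and $x\mapsto x^{-(\b-1)}$ is Lipschitz on $[\varepsilon,\infty)$ for each $\varepsilon>0$, so $w$ is absolutely continuous there; hence for a.e.\ $r\in[s,r^{*})$
\begin{equation*}
  w'(r) \;=\; -(\b-1)\,v(r)^{-\b}\,v'(r) \;\ge\; (\b-1)\,v(r)^{-\b}\,h(r)\,v(r)^{\b} \;=\; (\b-1)\,h(r),
\end{equation*}
where I used $-v'(r)\ge h(r)v(r)^\b$ together with $(\b-1)v(r)^{-\b}>0$. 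Integrating from $s$ to $r$ and using $v(s)\le q$, i.e.\ $w(s)\ge q^{-(\b-1)}$, gives
\begin{equation*}
  v(r)^{-(\b-1)} \;\ge\; q^{-(\b-1)} + (\b-1)\int_s^r h(\tau)\,d\tau, \qquad r\in[s,r^{*}),
\end{equation*}
and since $x\mapsto x^{-1/(\b-1)}$ is decreasing on $(0,\infty)$ this is precisely the asserted estimate on $[s,r^{*})$. It then extends to all of $[s,t]$: if $v(r^{*})=0$, monotonicity gives $v\equiv 0$ on $[r^{*},t]$ while the right-hand side is nonnegative there; otherwise necessarily $r^{*}=t$ and the bound at $r=t$ follows by letting $r\uparrow t$, both sides being continuous.

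I do not expect a genuine obstacle: the core is the textbook separation-of-variables estimate, and the only delicate point — that $v$ is positive up to its first zero $r^{*}$ and stays at $0$ afterwards — is forced by the sign of the right-hand side of \eqref{eqn:comp_1_1}. As an alternative that avoids introducing $r^{*}$, one could instead compare $v$ directly with the explicit function $\bar y(r):=\bigl(q^{-(\b-1)}+(\b-1)\int_s^r h(\tau)\,d\tau\bigr)^{-1/(\b-1)}$, which is a globally defined, positive, nonincreasing (hence non-exploding) solution of \eqref{eqn:comp_1_1}, via a standard one-dimensional comparison argument; since $\bar y$ stays strictly positive, the non-Lipschitz behaviour of $y\mapsto -h y^\b$ at $0$ is harmless. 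I would nonetheless present the separation-of-variables route as the main proof, as it is the most transparent.
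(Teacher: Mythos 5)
Your argument is correct. It is, however, a genuinely different route from the paper's: the paper proves the lemma by comparison with the explicit, strictly positive solution $y^\ve(r) = \bigl( (q+\ve)^{-(\b-1)} + (\b-1)\int_s^r h(\tau)d\tau \bigr)^{-1/(\b-1)}$ started at $q+\ve$, invoking the ODE comparison argument of the companion paper on singular equations and then letting $\ve \to 0$; the $\ve$-shift is what handles the degenerate initial value $q=0$ and keeps the reference solution strictly positive. You instead work directly on the subsolution, substituting $w = v^{-(\b-1)}$ on $[s,r^*)$, deriving $w' \ge (\b-1)h$ a.e., integrating, and disposing of the interval past the first zero $r^*$ by monotonicity of $v$ — with the cases $q=0$ and $v(s)=0$ treated separately. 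Your route is self-contained and elementary (no citation, no approximation parameter, no uniqueness statement for the ODE needed), at the price of the explicit bookkeeping around $r^*$; the paper's route is shorter modulo the cited comparison machinery and also yields the pointwise bound $y^\ve \le q+\ve$ it reuses elsewhere. One small remark on your alternative: for $\b>1$ the map $y \mapsto y^\b$ is in fact locally Lipschitz at $0$ (its derivative vanishes there), so the ``non-Lipschitz at $0$'' concern you mention belongs to the singular regime $\b<1$ of the companion paper rather than to this lemma; this is immaterial to your main proof.
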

\begin{proof}
  The proof proceeds as in \cite{G11d}, using that the unique nonnegative solution $y^\ve$ to \eqref{eqn:comp_1_1} with initial value $q+\ve$ is given by 
    $$y^\ve := \left( (q+\ve)^{-(\b-1)} + (\b-1) \int_s^r h(\tau)d\tau \right)^{\frac{-1}{\b-1}} \le q+\ve.$$
\end{proof}

\begin{lemma}[A priori bound]\label{lemma:comp_superlinear}
  Let $\b > 1$, $p: \R \to \R_+$ c\`adl\`ag, $h \in C(\R)$ positive, $q: \R \to \R_+$ and for each $s \in \R$ let $v(\cdot,s): [s,\infty) \to \R_+$ be an absolutely continuous subsolution of 
  \begin{equation}\label{eqn:comp_2_1}\begin{split}
    y'(r,s) &= -h(r)y(r,s)^\b + p(r),\ r \ge s \\
  \end{split}\end{equation}
  with $y(s,s) = q(s)$. Assume $\int_s^t h(\tau)d\tau \to \infty$, for all $t \in \R$ and $s \to -\infty$. Then, for each $t \in \R$ there is an $s_0=s_0(t,p,h) \in \R$ and an $R=R(t,p,h) > 0$ such that for all $s \le s_0$
    \[ v(t,s) \le R.\]
\end{lemma}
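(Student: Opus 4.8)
The strategy is to split the time axis into a "recent past" where we can use the superlinear absorption, and a bounded window near $t$ where we run a crude comparison. The key device is that the superlinear term $-h(r)y^\b$ swamps any fixed size of the initial datum after a short time, uniformly in how large that datum is — this is exactly the content of Lemma \ref{lemma:comp_1}, whose bound $\left(q^{-(\b-1)}+(\b-1)\int_s^r h\right)^{-1/(\b-1)}$ has a limit as $q\to\infty$ that no longer depends on $q$, namely $\left((\b-1)\int_s^r h\right)^{-1/(\b-1)}$. So arbitrarily large initial data are forgotten.

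Concretely, first I would fix $t\in\R$ and pick an auxiliary time $t_1 < t$, say $t_1 = t-1$. On the window $[t_1,t]$, since $p$ is c\`adl\`ag it is bounded on $[t_1,t]$, and $h$ is continuous hence bounded there; a direct Gronwall-type / comparison estimate (dropping the good term $-h y^\b\le 0$ and using only $y'\le p(r)$, or comparing with the ODE $y'=p(r)$) shows that any subsolution on $[t_1,t]$ satisfies $v(t,s)\le v(t_1,s) + \int_{t_1}^t p(\tau)\,d\tau$. Thus it suffices to bound $v(t_1,s)$ by a constant independent of $s$ for all $s$ small enough.

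Next, for $s\le t_1$ I would bound $v$ on $[s,t_1]$ by first controlling it on an initial stretch $[s,s+1]$ (again crudely, using boundedness of $p$ and $h$ on compacts together with $y(s,s)=q(s)$ — here a monotone-in-time comparison gives $v(r,s)\le q(s) + \int_s^r p$, which is finite but $s$-dependent), and then, crucially, applying Lemma \ref{lemma:comp_1} on $[s+1,t_1]$ with the (finite) value $v(s+1,s)$ as the new initial datum and with $h$ there bounded below by a positive constant on the compact set $[s+1,t_1]$ — wait, this does not immediately work because the stretch $[s,s+1]$ still produces an $s$-dependent bound. The cleaner route: apply Lemma \ref{lemma:comp_1} directly on $[s,t_1]$ with $q=q(s)$ to get $v(t_1,s)\le\left(q(s)^{-(\b-1)}+(\b-1)\int_s^{t_1}h\right)^{-1/(\b-1)}\le\left((\b-1)\int_s^{t_1}h\right)^{-1/(\b-1)}$, but Lemma \ref{lemma:comp_1} is stated for the equation \emph{without} the forcing $p$. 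So I would instead first absorb $p$: on $[s,t_1]$ write $v'\le -h v^\b + p$, and use the assumption $\int_s^{t_1}h\to\infty$ to argue that for $s$ small enough the $-hv^\b$ term dominates whenever $v$ is above a threshold. The standard way to make this rigorous is the following dichotomy: either $v(r,s)$ stays below the threshold $\rho(r):=\big(2p(r)/h(r)\big)^{1/\b}$ (a bounded function of $r$ on $[t_1/2 \text{-ish window}]$, but it's only locally bounded near $t_1$; since we ultimately only care about $v$ at time $t_1$ this is fine), in which case $v(t_1,s)$ is bounded by a constant depending only on $p,h$ near $t_1$; or $v$ exceeds it at some last time $r^\*<t_1$, and on $(r^\*,t_1]$ we have $v'\le -\tfrac12 h v^\b$, so Lemma \ref{lemma:comp_1} applied from $r^\*$ gives $v(t_1,s)\le\big(\tfrac{\b-1}{2}\int_{r^\*}^{t_1}h\big)^{-1/(\b-1)}$, which is bounded independently of $s$ and $r^\*$ provided $\int_{r^\*}^{t_1}h$ is bounded below — but $r^\*$ could be close to $t_1$, so I instead only invoke this when $r^\* \le t_1-\delta$ for a small fixed $\delta$, and for $r^\* \in (t_1-\delta,t_1]$ fall back on the crude bound $v(t_1,s)\le v(r^\*,s)+\int_{r^\*}^{t_1}p \le \rho(r^\*)+\int$, bounded on the compact $[t_1-\delta,t_1]$. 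Choosing $R$ to be the max of these finitely many constants, and $s_0$ so that $\int_s^{t_1-\delta}h\ge 1$ for $s\le s_0$, finishes the argument.

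The main obstacle — and the only genuinely delicate point — is handling the interplay between the forcing $p$ and the superlinear sink $h v^\b$ near the endpoint $t_1$: the comparison Lemma \ref{lemma:comp_1} is clean only without forcing, so one must either (a) carry out the threshold dichotomy above to reduce to the forcing-free case on a sub-interval where $v$ is provably large, or (b) reprove a forced version of Lemma \ref{lemma:comp_1} directly. I would go with (a), as it reuses the already-established Lemma verbatim and only costs an elementary ``last exit time'' argument. Everything else (the bounded window near $t$, boundedness of c\`adl\`ag $p$ and continuous $h$ on compacts, the divergence $\int_s^t h\to\infty$ giving the threshold for $s_0$) is routine.
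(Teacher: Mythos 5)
Your proposal is correct in substance and rests on the same key mechanism as the paper's own proof: the threshold $\rho(r):=\bigl(2p(r)/h(r)\bigr)^{1/\b}$, a last-crossing dichotomy in time, an application of Lemma \ref{lemma:comp_1} on the stretch where $v$ lies strictly above the threshold (where indeed $v'\le-\tfrac12 h v^\b$), and the crude bound $v'\le p$ together with boundedness of $p$ and $1/h$ on compacts elsewhere; the paper's set $A(s)$ and time $a(s)$ are exactly your below-threshold set and last below-threshold time. Two wording repairs are needed for your dichotomy to parse: $r^*$ must be the \emph{last time $v$ is at or below} $\rho$ (so that $v$ exceeds $\rho$ on $(r^*,t_1]$, which is where the differential inequality holds and where your fallback estimate $v(r^*,s)\le\rho(r^*)$ -- up to a left limit of the c\`adl\`ag $p$, handled as in the paper by taking a supremum of $\rho$ over a slightly larger compact interval -- is valid), whereas as literally written ("last time $v$ exceeds it") the inequality is claimed on the wrong side of the threshold; and the remaining case, in which $v$ never dips below $\rho$ on $[s,t_1]$, should be listed explicitly, being handled by the same comparison started at $s$ (this is evidently what your choice of $s_0$ with $\int_s^{t_1-\d}h\ge1$ is aimed at). With these readings your bookkeeping is actually a bit leaner than the paper's: the paper first rules out emptiness of $A(s)$ and then proves a uniform lower bound $a(s)\ge a_1$ (both by contradiction, using $\int_s^t h\to\infty$ and the normalization $p\ge\d>0$) before bounding $v(t,s)$ by $\sup_{[a_1-1,t]}\rho$, whereas you use the comparison bound itself, $\bigl(\tfrac{\b-1}{2}\int_{r^*}^{t_1}h\bigr)^{-1/(\b-1)}\le\bigl(\tfrac{\b-1}{2}\int_{t_1-\d}^{t_1}h\bigr)^{-1/(\b-1)}$, whenever $r^*\le t_1-\d$, which improves as $r^*$ decreases and therefore needs no uniform lower bound on $r^*$, no normalization of $p$, and in fact only the positivity of $\int_{t_1-\d}^{t_1}h$ rather than the full divergence hypothesis -- a mild gain in generality that your organization buys at no extra cost.
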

\begin{proof} 
  We proceed similarly to \cite{G11d}. Without loss of generality we assume $p(r) \ge \d > 0$ for some $\d > 0$ (otherwise redefine $p(r):= p(r) \vee \d$).

 Let $t\in \R$, $A(s) := \{ r \in [s,t] \ |\ \frac{h(r)}{2} v(r,s)^\b \le p(r) \}$ and $a(s) = \sup A(s) \vee s$. We first show that there exists an $s_0=s_0(t,p,h) \le t$ such that $A(s) \ne \emptyset$ for all $s \le s_0$. Let $s \le t$ such that $A(s) = \emptyset$, i.e. $ \frac{h(r)}{2} v(r,s)^\b > p(r)$, for all $r \in [s,t]$. Hence,
  \begin{align*}
    v'(r,s) 
    &\le - h(r) v(r,s)^\b + p(r) 
    \le - \frac{1}{2} h(r) v(r,s)^\b,\ \text{ for a.e. } r \in [s,t].
  \end{align*}
  By Lemma \ref{lemma:comp_1} and using the assumption $\int_s^t h(\tau)d\tau \to \infty$ we obtain
  \begin{align}
    v(t,s) 
    &\le \left( \frac{\b-1}{2} \int_s^t h(\tau)d\tau \right)^{\frac{-1}{\b-1}} \to 0, \quad \text{for } s \to -\infty.
  \end{align}  
  Since also $0 < \big( \frac{2p(t)}{h(t)}  \big)^{\frac{1}{\b}} < v(t,s)$, we conclude $s \ge s_0$ for some $s_0 = s_0(t,p,h)$. 

  Next we prove that there exists an $a_1=a_1(t,p,h) \le t$ such that $a_1\le a(s) $ for all $s \le s_0$. Let $s \le s_0$, thus $A(s) \ne \emptyset$. If $a(s) = t$ then nothing has to be shown, thus suppose $a(s) < t$. By definition of $a(s)$ and right-continuity of $v,p$ we have 
   \[ p(r) \le \frac{h(r)}{2} v(r,s)^\b , \text{ for all } r \in [a(s),t].\]
  Arguing as above we obtain
  \begin{align*}
    v(t,s) 
    &\le \left(  v((a(s),s)^{-(\b-1)} + \frac{\b-1}{2} \int_{a(s)}^t h(\tau) d\tau \right)^{\frac{-1}{\b-1}} \\
    &\le \left( \frac{\b-1}{2} \int_{a(s)}^t h(\tau) d\tau \right)^{\frac{-1}{\b-1}} \to 0, 
  \end{align*}
  for $a(s) \to -\infty$. Since $0 < \left( \frac{2p(t)}{h(t)} \right)^{\frac{1}{\b}} < v(t,s) $ this implies $a(s) \ge a_1 = a_1(t,p,h)$ for all $s \le s_0$.
  
  On $[a(s),t]$ we have $\frac{h(r)}{2} v(r,s)^\b \ge p(r)$ and thus
  \begin{align*}
    v(t,s) 
    &\le v(a(s),s) - \int_{a(s)}^t h(\tau) v(\tau,s)^\b d\tau + \int_{a(s)}^t p(\tau)d\tau \\
    &\le v(a(s),s) - \int_{a(s)}^t p(\tau) d\tau \\
    &\le \sup_{r \in [a_1-1,t]} \left( \frac{2 p(r)}{h(r)}  \right)^{\frac{1}{\b}} =: R(t,p,h),\quad \forall s \le s_0.
  \end{align*}
\end{proof}

\subsection{Strictly stationary solutions (Theorem \ref{thm:nonlinear_OU} \& Theorem \ref{thm:nonlinear_OU_prop})}\label{sec:stat_OU}

In this Section we will construct strictly stationary solutions to the SPDE 
  $$dX_t = M(t,X_t)dt + B_tdW_t,$$
by letting $s \to -\infty$ in $X(t,s;\o)x$ and then selecting a strictly stationary version $\eta$ from the resulting stationary limit process by use of Proposition \ref{prop:stat_perfect}. 

We will need the following elementary Lemma.
\begin{lemma}\label{lemma:integral_shift}
  Let $s \in \R$, $t \ge s$ and $h > 0$. Then there is a $\P$-zero set $N_{s,h} \in \mcF$ such that
    \[ \int_{s+h}^{t+h} B_r dW_r (\o) = \int_s^{t} B_r dW_r (\t_h\o), \quad \forall \o \in \O\setminus N_{s,h}.\]
\end{lemma}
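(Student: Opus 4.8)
The plan is to deduce this from the standard fact that the It\^{o} integral commutes with the measure--preserving Wiener shift: I would first check the identity for elementary (step--function) integrands by a direct \emph{pathwise} computation on the canonical space $\O=C(\R;H\times\R)$, and then transfer it to general $B$ by $L^2$--approximation, using that $\t_h$ preserves $\P$. Two structural inputs enter. First, the canonical shift relations $W_r(\t_h\o)=W_{r+h}(\o)-W_h(\o)$ and $\b_r(\t_h\o)=\b_{r+h}(\o)-\b_h(\o)$, valid for \emph{every} $\o\in\O$; these also give $\t_h^{-1}(\mcF_t)\subseteq\mcF_{t+h}$ for the non--completed Wiener filtration and, after completion, $\t_h^{-1}(\bar\mcF_t)\subseteq\bar\mcF_{t+h}$ up to $\P$--null sets, so that $\xi\mapsto\xi\circ\t_h$ maps $\bar\mcF_{r}$--measurable random variables to $\bar\mcF_{r+h}$--measurable ones. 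Second, the strict stationarity of $B$, i.e.\ $B_{r+h}=B_r\circ\t_h$ $\P$--a.s.\ (the setting in which the lemma is applied, cf.\ Theorem~\ref{thm:nonlinear_OU}(iii)), which ensures that the integrand obtained by translating $B$ from $[s,t]$ to $[s+h,t+h]$ agrees $\P$--a.s.\ and a.e.\ in $r$ with $B$ on $[s+h,t+h]$.

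For an elementary integrand $B^e=\sum_{j=0}^{N-1}\Phi_j\,\mathbbm{1}_{(r_j,r_{j+1}]}$, with $s=r_0<\dots<r_N=t$ and each $\Phi_j$ an $\bar\mcF_{r_j}$--measurable $L_2(U,H)$--valued random variable, the integral $\int_s^t B^e_r\,dW_r=\sum_j\Phi_j(W_{r_{j+1}}-W_{r_j})$ is a genuine $H$--valued function of $\o$, defined for every $\o\in\O$. Evaluating at $\t_h\o$ and inserting the increment relation (the $W_h(\o)$--terms cancel) yields
\[
 \Bigl(\int_s^t B^e_r\,dW_r\Bigr)(\t_h\o)=\sum_{j}\Phi_j(\t_h\o)\bigl(W_{r_{j+1}+h}(\o)-W_{r_j+h}(\o)\bigr),
\]
and, since each $\Phi_j\circ\t_h$ is $\bar\mcF_{r_j+h}$--measurable by the filtration relation, the right--hand side is exactly $\int_{s+h}^{t+h}(B^e)^{\t_h}_r\,dW_r$ evaluated at $\o$, where $(B^e)^{\t_h}_r(\o):=B^e_{r-h}(\t_h\o)$ is the elementary integrand on $[s+h,t+h]$ obtained by translating $B^e$. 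Hence for elementary integrands the identity holds \emph{pointwise} in $\o$, with no exceptional set.

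The final step is the passage to the limit. Choose elementary $\bar\mcF_{\cdot}$--adapted $B^n$ with $B^n\to B$ in $L^2\bigl(\O;L^2([s,t];L_2(U,H))\bigr)$; then $\int_s^t B^n_r\,dW_r\to\int_s^t B_r\,dW_r$ in $L^2(\O;H)$, and composing with the $\P$--preserving map $\t_h$ preserves this convergence. On the other side, the substitution $r\mapsto r-h$ together with $\P$--invariance of $\t_h$ gives $(B^n)^{\t_h}\to (B)^{\t_h}$ in $L^2\bigl(\O;L^2([s+h,t+h];L_2(U,H))\bigr)$, while strict stationarity identifies $(B)^{\t_h}$ with $B$ on $[s+h,t+h]$; hence $\int_{s+h}^{t+h}(B^n)^{\t_h}_r\,dW_r\to\int_{s+h}^{t+h}B_r\,dW_r$ in $L^2(\O;H)$. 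Taking $L^2(\O;H)$--limits in the pointwise identity of the previous step applied to each $B^n$ gives $\bigl(\int_{s+h}^{t+h}B_r\,dW_r\bigr)(\cdot)=\bigl(\int_s^t B_r\,dW_r\bigr)(\t_h\cdot)$ in $L^2(\O;H)$, and passing to a subsequence along which both sides converge $\P$--a.s.\ yields a $\bar\mcF$--measurable null set off which the equality holds; enlarging it to an $\mcF$--measurable $\P$--null set $N_{s,h}$ (possible by definition of the completion) gives the claim. The only obstacle is bookkeeping: one must run a \emph{single} approximating sequence controlling both sides at once, keep track of the shift in the filtration (forcing the $\bar\mcF_{r_j+h}$--measurability of the translated coefficients), and remember that the stochastic integrals are elements of $L^2(\O;H)$ defined up to null sets, so that ``evaluation along $\t_h\o$'' is meaningful only after this limiting procedure.
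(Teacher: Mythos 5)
Your proof is correct and takes essentially the same route as the paper: approximate the stochastic integral by sums, shift these pointwise using the canonical relation $W_r(\t_h\o)=W_{r+h}(\o)-W_h(\o)$ together with stationarity of $B$, pass to an a.s.\ convergent subsequence, and enlarge the resulting $\bar\mcF$-null set to a $\P$-null set in $\mcF$. The only cosmetic difference is that the paper works with Riemann sums along a subsequence of partitions (legitimate for the continuous integrand arising in the application), whereas you use elementary integrands and $L^2$-approximation, which is marginally more general.
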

\begin{proof}
  By the construction of the stochastic integral, for any sequence of partitions $\D^n(s,t)$ of $[s,t]$ with $|\D^n(s,t)| \to 0$ there is a subsequence of partitions $\D^{n_k}(s,t)$ and a $\P$-zero set $\bar N_s \in \bar\mcF$ such that
    \[ \int_s^{t} B_r dW_r (\o) = \lim_{k \to \infty} \sum_{t_i,t_{i+1} \in \D^{n_k}(s,t); t_i < t_{i+1} } B_{t_i}(\o) (W_{t_{i+1} \wedge t}(\o)-W_{t_i \wedge t}(\o)), \]
  for all $\o \in \O\setminus N_s$. We can choose a $\P$-zero set $N_s \in \mcF$ such that $\bar N_s \subseteq N_s$. Hence, for $\o \in \O\setminus (N_{s+h} \cup \t_h^{-1}N_s) =: N_{s,h}$
  \begin{align*}
    &\int_{s+h}^{t+h} B_r dW_r (\o) \\
    &\hskip10pt = \lim_{n \to \infty} \sum_{t_i,t_{i+1} \in \D^n(s+h,t+h); t_i < t_{i+1} } B_{t_i}(\o) (W_{t_{i+1} \wedge (t+h)}(\o)-W_{t_i  \wedge (t+h)}(\o)) \\
    &\hskip10pt= \lim_{n \to \infty} \sum_{t_i,t_{i+1} \in \D^n(s+h,t+h); t_i < t_{i+1} } B_{t_i-h}(\t_h\o) (W_{(t_{i+1}-h)  \wedge t}(\t_h\o)-W_{(t_i-h)  \wedge t}(\t_h\o)) \\
    &\hskip10pt= \int_s^{t} B_r dW_r (\t_h\o),
  \end{align*}
  where we have selected a subsequence of partitions if necessary. 
\end{proof}

\begin{proof}[Proof of Theorem \ref{thm:nonlinear_OU}]
  Let $x \in H$ and $X(t,s;\o)x$ denote the unique $\bar\mcF_t$-adapted variational solution associated to 
  \begin{equation}\label{eqn:SPDE_2}
    dX_t = M(t,X_t)dt + B_t dW_t
  \end{equation}
  given by Theorem \ref{thm:var_ex}.

  (i): There is an $\bar\mcF_t$-adapted, $\mcF$-measurable process $u: \R \times \O \to H$ such that
  \begin{equation*}
    \lim_{s \to -\infty} X(t,s;\cdot)x = u_t,
  \end{equation*}  
  in $L^2(\O;H)$ for each $t \in \R$, independent of $x \in H$. 

  Let $s_2 \ge s_1$ and $x,y \in H$. Then
  \begin{align*}
    & \frac{d}{dt}\|X(t,s_2;\o)x- X(t,s_1;\o)y\|_H^2  \\
    &= 2\ _{V^*}\< M(t,X(t,s_2;\o)x) - M(t,X(t,s_1;\o)y),X(t,s_2;\o)x - X(t,s_1;\o)y\>_V \nonumber\\
    &\le - c \left(\|X(t,s_2;\o)x-X(t,s_1;\o)y\|_H^2\right)^\frac{\a}{2}, \nonumber
  \end{align*}
  for a.e.\ $t \ge s_2$ and $\P$-a.a.\ $\o \in \O$. We apply Lemma \ref{lemma:comp_1} (Gronwall's inequality for $\a=2$) to conclude
  \begin{equation}\label{eqn:diff}
    \|X(t,s_2;\o)x - X(t,s_1;\o)y\|_H^2 \le
    \begin{cases}
        \left( (\b-1)c (t-s_2)\right)^{-\frac{1}{\b-1}} & \text{, if } \b > 1 \\
        \|x-X(s_2,s_1;\o)y\|_H^2  e^{-c (t-s_2) } & \text{, if } \b = 1,
   \end{cases}
  \end{equation}
  where $\b := \frac{\a}{2}$, for all $t \ge s_2$. Since the right hand side is decreasing in $t$ this yields
    $$ \E \sup_{r \in [t,\infty)} \|X(r,s_2;\cdot)x - X(r,s_1;\cdot)y\|_H^2 \le
    \begin{cases}
        \left( (\b-1)c (t-s_2)\right)^{-\frac{1}{\b-1}} & \text{, if } \b > 1 \\
        \E\|x-X(s_2,s_1;\cdot)y\|_H^2  e^{-c (t-s_2) } & \text{, if } \b = 1,
   \end{cases} $$
  for all $t \ge s_2$. 

  If $\a=2$ there is a ${\d}(t) = C (f_t + 1)$ such that
    \[ 2 \ _{V^*}\< M(t,v),v\>_V + \|B_t\|_{L_2(U,H)}^2 \le -\frac{c}{2} \|v\|_H^2 + {\d}(t),\]
  for each $v \in V$ (cf. \cite[Lemma 4.3.8]{PR07}). Hence,
  \begin{align*}
    \frac{d}{dt}\E \|X(t,s;\cdot)y\|_H^2 
    &= \E \left(2\Vbk{M(t,X(t,s;\cdot)y),X(t,s;\cdot)y} + \|B_t\|_{L_2(U,H)}^2\right) \\
    &\le - \frac{c}{2} \E \|X(t,s;\cdot)y\|_H^2 + \E\d(t ),\ \text{for a.e. } t \ge s.
  \end{align*}
  Thus, by Gronwall's inequality and exponential integrability of $\E f_t$
  \begin{align*}
    \E \|X(s_2,s_1;\cdot)y\|_H^2
      &\le e^{-\frac{c}{2}(s_2-s_1)}\|y\|_H^2 + \int_{s_1}^{s_2} e^{-\frac{c}{2}(s_2-r)} \E\d(r)dr \\
      &\le e^{-\frac{c}{2}s_2} \left( e^{\frac{c}{2}s_1}\|y\|_H^2 + \int_{-\infty}^{0} e^{\frac{c}{2}r} \E\d(r)dr \right) \\
      &\le e^{-\frac{c}{2}s_2} \left( e^{\frac{c}{2}s_1}\|y\|_H^2 + C \right),
  \end{align*}
  for all $t \ge s$. We obtain 
  \begin{equation*}\begin{split}\label{eqn:diff_2}
    &\E \sup_{r \in [t,\infty)} \|X(r,s_2;\cdot)x - X(r,s_1;\cdot)y\|_H^2 \\
    &\hskip10pt\le 
    \begin{cases}
        \left( (\b-1)c (t-s_2)\right)^{-\frac{1}{\b-1}} & \text{, if } \b > 1 \\
        2 \left( e^{\frac{c}{2}s_1}\|y\|_H^2 + e^{\frac{c}{2} s_2 }\|x\|_H^2 + C \right) e^{\frac{c}{2} s_2 } e^{-c t } & \text{, if } \b = 1,
   \end{cases}
  \end{split}\end{equation*}
  for all $t \wedge 0 \ge s_2$.  Hence, $X(\cdot,s;\cdot)x$ is a Cauchy sequence in $L^2(\O;C([t,\infty);H))$ and 
    \[ u_t = \lim_{s \to -\infty} X(t,s;\cdot)x       \]
  exists as a limit in $L^2(\O;H)$ for all $t \in \R$ and $u$ is $\bar\mcF_t$-adapted. Since $X(\cdot,s;\cdot)x$ also converges in $L^2(\O;C([t,\infty);H))$ $u$ is continuous $\P$-almost surely and since $u$ is $\bar\mcF$-measurable we can choose an indistinguishable $\mcF$-measurable version of $u$.

  (ii): $u$ solves \eqref{eqn:all_time_spde}. 

  Let $t_0 \ge s_0 \ge s$. Then
    \[ X(t,s;\o)x = X(s_0,s;\o)x + \int_{s_0}^t M(r,X(r,s;\o)x) dr + \int_{s_0}^t B_rdW_r(\o), \]
  for all $t \in [s_0,t_0]$ and $\P$-a.a.\ $\o \in \O$. Using condition $(H3)$ we obtain
  \begin{align*}
    &\E e^{-C t_0}\|X(t_0,s;\cdot)x\|_H^2 \\
    &= \E e^{-C s_0}\|X(s_0,s;\cdot)x\|_H^2 - C \int_{s_0}^{t_0} \E e^{-C r} \|X(r,s;\cdot)x\|_H^2 dr \\
      &\hskip15pt + \E \int_{s_0}^{t_0} e^{-C r} \left( \ _{v^*}\< M(r,X(r,s;\cdot)x), X(r,s;\cdot)x\>_V  + \|B_r\|^2_{L_2(U,H)} \right) dr  \\ 
    &\le \E e^{-C s_0}\|X(s_0,s;\cdot)x\|_H^2 - c \E \int_{s_0}^{t_0} e^{-C r} \|X(r,s;\cdot)x\|_V^\a dr + \E \int_{s_0}^{t_0} e^{-C r} f_r dr.
  \end{align*}
  Hence,
  \begin{align*}
     c \E \int_{s_0}^{t_0} \|X(r,s;\cdot)x\|_V^\a dr 
    &\le \E e^{-C (s_0-t_0)}\|X(s_0,s;\cdot)x\|_H^2  + \E \int_{s_0}^{t_0} e^{-C (r-t_0)} f_r dr
  \end{align*}
  and the right hand side converges to $\E e^{-C (s_0-t_0)}\|u_{s_0}\|_H^2  + \E \int_{s_0}^{t_0} e^{-C (r-t_0)} f_r dr$ for $s \to -\infty$. Thus $X(\cdot,s;\cdot)x$ is bounded in $L^\a([s_0,t_0] \times \O; V)$. Since $\lim\limits_{s \to -\infty} X(\cdot,s;\cdot)x = u_\cdot$ in $L^2(\O;C([s_0,t_0];H))$, it follows
    \[  X(\cdot,s;\cdot)x \rightharpoonup u, \text{ in } L^\a([s_0,t_0] \times \O; V).\]
  In particular, $u \in L^\a(\O;L^\a_{loc}(\R;V))$. Let $s_n \to -\infty$. We observe (at least for a subsequence)
    \[  M(\cdot,X(\cdot,s_n;\cdot)x) \rightharpoonup Y, \text{ in } L^\a([s_0,t_0] \times \O; V)^*.\]
  Since $X(\cdot,s;\cdot)$ and $M$ are $\bar\mcF_t$-progressively measurable, so is $Y$. Let $v \in V$ and $\vp \in L^\infty([s_0,t_0]\times\O)$. Then
  \begin{align*}
    &\E\int_{s_0}^{t_0} \ _{V^*}\< u_t, v \>_V \vp_t dt
      = \lim_{n \to \infty} \E\int_{s_0}^{t_0} \ _{V^*}\< X(t,s_n;\cdot)x, v \>_V  \vp_t dt\\
    & = \lim_{n \to \infty} \E\int_{s_0}^{t_0} \left( \ _{V^*}\< X(s_0,s_n;\cdot)x + \int_{s_0}^t M(\tau,X(\tau,s_n;\cdot)x) d\tau + \int_{s_0}^t B_\tau dW_\tau, v \>_V \right) \vp_t dt \\
    & = \E\int_{s_0}^{t_0} \ _{V^*}\< u_{s_0} + \int_{s_0}^{t} Y_\tau + \int_{s_0}^{t} B_\tau dW_\tau, v \>_V \vp_t dt.
  \end{align*}
  Hence
   \[ u_t = u_{s_0} + \int_{s_0}^{t} Y_\tau d\tau + \int_{s_0}^t B_\tau dW_\tau, \quad \forall t \in [s_0,t_0],\ \P\text{-a.s.}\]
  By It\^o's formula
  \begin{align}\label{eqn:eta_ito}
    \E \|u_{t_0}\|_H^2  = \E \|u_{s_0}\|_H^2 + \E \int_{s_0}^{t_0} \ _{V^*}\< Y_\tau, u_\tau\>_V + \|B_\tau\|_{L_2(U,H)}^2 d\tau. 
  \end{align} 
  Let $\Phi \in L^\a([s_0,t_0] \times \O; V)$. We use the monotonicity trick
  \begin{align*}
   \ _{V^*}\< M(\tau,X(\tau,s;\o)x), X(\tau,s;\o)x \>_V 
    & \le  \ _{V^*}\< M(\tau,X(\tau,s;\o)x), \Phi_\tau \>_V \\
         &\hskip10pt+ \ _{V^*}\< M(\tau,\Phi_\tau), X(\tau,s;\o)x-\Phi_\tau \>_V
  \end{align*}
  to obtain
  \begin{align*}
    \E \|X({t_0},s;\cdot)x\|_H^2 
    &\le \E \|X(s_0,s;\cdot)x\|_H^2 + \E \int_{s_0}^{t_0} \ _{V^*}\< M(\tau,X(\tau,s;\cdot)x), \Phi_\tau \>_V \\
      &\hskip15pt + \ _{V^*}\< M(\tau,\Phi_\tau), X(\tau,s;\cdot)x-\Phi_\tau \>_V + \|B_\tau\|_{L_2(U,H)}^2 d\tau .
  \end{align*} 
  Taking $s \to -\infty$ yields
  \begin{align*}
    \E \|u_{t_0}\|_H^2 
    &\le \E \|u_{s_0}\|_H^2 \\
      &\hskip10pt + \E \int_{s_0}^{t_0} \ _{V^*}\< Y_\tau, \Phi_\tau \>_V + \ _{V^*}\< M(\tau,\Phi_\tau), u_\tau -\Phi_\tau \>_V + \|B_\tau\|_{L_2(U,H)}^2 d\tau .
  \end{align*} 
  Substracting \eqref{eqn:eta_ito} leads to
  \begin{align*}
    0 \le \E \int_{s_0}^{t_0} \ _{V^*}\< M(\tau,\Phi_\tau)-Y_\tau, u_\tau - \Phi_\tau \>_V d\tau.
  \end{align*} 
  By choosing $\Phi = u - \ve \tilde{\phi} v$ with $v \in V$ and $\tilde{\phi} \in L^{\infty}([s_0,t_0] \times \O)$, dividing by $\ve > 0$ and taking $\ve \to 0$ we obtain
  \begin{align*}
    0 = \E \int_{s_0}^{t_0} \ _{V^*}\< M(\tau,u_\tau)-Y_\tau, \tilde{\phi} v \>_V d\tau.
  \end{align*} 
  Hence $M(\tau,u_\tau) = Y_\tau$, d$t \otimes \P$-almost surely and thus $\P$-a.s.
    \[ u_t = u_{s_0} + \int_{s_0}^{t} M(\tau,u_\tau) d\tau + \int_{s_0}^t B_\tau dW_\tau, \text{ for all } t \ge s_0. \]

  (iii):  Let now $M, B$ be strictly stationary. We prove crude stationarity for $u$. We first show $X(t,s;\o)x = X(0,s-t;\t_t \o)x$ for all $t \ge s$, $\P$-almost surely. Let $h > 0, t \ge s$ and $\bar{X}_h(t)(\o) := X(t-h,s-h;\t_h \o)x$. Then for $\P$-a.a. $\o \in \O$ (with zero set possibly depending on $s,h,x$)
  \begin{align*}
    \bar{X}_h(t)(\o) 
    &= X(t-h,s-h;\t_h \o)x \\
    &= x + \int_{s-h}^{t-h} M(r,X(r,s-h;\t_h \o)x;\t_h \o)dr + \left( \int_{s-h}^{t-h} B_r dW_r\right) (\t_h \o) \\
    &= x + \int_{s-h}^{t-h} M(r+h,X(r,s-h;\t_h \o)x;\o)dr + \left( \int_s^t B_r dW_r\right) (\o) \\
    &= x + \int_s^t M(r,\bar{X}_h(r)(\o);\o)dr + \left( \int_s^t B_r dW_r \right) (\o).
  \end{align*}
  Hence $X(t-h,s-h;\t_h \o)x = X(t,s;\o)x$, $\P$-almost surely. In particular 
  \begin{equation}\label{eqn:X-shift}
    X(0,s-t;\t_t \o)x = X(t,s;\o)x,
  \end{equation}
  $\P$-almost surely (with zero set possibly depending on $t,s,x$). For an arbitrary sequence $s_n \to -\infty$ there exists a subsequence (again denoted by $s_n$) such that $X(t,s_n;\cdot)x \to u_t$ and $X(0,s_n-t;\cdot)x \to u_0$ $\P$-almost surely. Thus passing to the limit in \eqref{eqn:X-shift} yields
   \[ u_0(\t_t \o) = u_t(\o),\]
  $\P$-almost surely (with zero set possibly depending on $t$).

  Since $u_\cdot \in L^\a(\O;L^\a_{loc}(\R;V))$, in particular $u_\cdot(\o) \in L^\a_{loc}(\R;V)$ for almost all $\o \in \O$ and since $u$ is $\mcF$-measurable, we now use Proposition \ref{prop:stat_perfect} to deduce the existence of an indistinguishable, $\mcF$-measurable, $\bar\mcF_t$-adapted, strictly stationary, continuous process $\tilde{u}$ such that $\tilde{u}_\cdot(\o) \in L^\a_{loc}(\R;V)$ for all $\o \in \O$.

  We can now proceed to prove \eqref{eqn:u_V_bound}. Let $\eta \le 1$ and note that by $(H2')$
    $$ \dualdel{V}{M(t,v)}{v} + \|B_t\|_{L_2(U,H)}^2 \le -\frac{c}{2}\|v\|_V^\a + C(1+f_t),\quad \forall v\in V.$$
  By It\^o's formula and the product rule we obtain
  \begin{align*}
    \E \|X(t,s;\cdot)x\|_H^2e^{\eta t} 
    &\le \E \|X(s_0,s;\cdot)x\|_H^2e^{\eta s_0} - \frac{c}{2} \E \int_{s_0}^t e^{\eta r} \|X(r,s;\cdot)x\|_V^\a dr  \\
    &\hskip12pt + \E \int_{s_0}^t e^{\eta r} C(1+f_r) dr + \eta \E \int_{s_0}^t e^{\eta r} \|X(r,s;\cdot)x\|_H^2 dr \\
    &\le \E \|X(s_0,s;\cdot)x\|_H^2e^{\eta s_0} - \left(\frac{c}{2} -\eta C \right) \E \int_{s_0}^t e^{\eta r} \|X(r,s;\cdot)x\|_V^\a dr \\
    &\hskip12pt + \E \int_{s_0}^t e^{\eta r} C(1+f_r) dr,\quad \forall s \le s_0 \le t.
  \end{align*}
  Letting $s \to -\infty$ we conclude
  \begin{align*}
    \E \|u_t\|_H^2e^{\eta t} + \left(\frac{c}{2}-\eta C \right) \E \int_{s_0}^t e^{\eta r} \|u_r\|_V^\a dr
    &\le \E \|u_{s_0}\|_H^2 e^{\eta s_0} + \E \int_{s_0}^t e^{\eta r} C(1+f_r) dr. 
  \end{align*}
  Stationarity of $u$ implies
  \begin{align*}
    \left(\frac{c}{2}-\eta C \right) \E \int_{-\infty}^t e^{\eta r} \|u_r\|_V^\a dr
    &\le  \E \int_{-\infty}^t e^{\eta r} C(1+f_r) dr. 
  \end{align*}
\end{proof}

\begin{proof}[Proof of Theorem \ref{thm:nonlinear_OU_prop}]
  (i): Since $\|u_r(\o)\|_H^k = \|u_0(\t_r \o)\|_H^k$ and $\t$ is an ergodic dynamical system on $(\O,\mcF,\P)$, \eqref{eqn:OU_bound} follows from Birkhoff's ergodic theorem as soon as we have shown $\|u_0\|_H^k \in L^1(\O)$. 
  By $(H2')$ there are $C, c > 0$  such that
    \[ 2 \ _{V^*}\<M(t,v),v\>_V + \|B_t\|_{L_2(U,H)}^2 \le - c \|v\|_H^\a + C + f_t.\]
  Application of It\^o's formula to $\|\cdot\|_H^k$ yields (cf.\ \cite[Lemma 2.2]{LR10})
  \begin{align*}
    \frac{d}{dt} \E \|X(t,s;\cdot)0\|_H^k 
    \le - \frac{c k}{4} \left(\E \|X(t,s;\cdot)0\|_H^k\right)^\frac{k+\a-2}{k}  + C(k)  \E (1 + f_t)^\frac{k-2+\a}{\a},  
  \end{align*}
  for a.e.\ $t \ge s$. 
  If $\a > 2$ then by Lemma \ref{lemma:comp_superlinear} 
    $$\E \|X(0,s;\cdot)0\|_H^k \le C < \infty,$$
  for $s$ small enough and for some constant $C > 0$. 
  For $\a =2$, by Gronwall's inequality
  \begin{align*}
    &\E \|X(0,s;\cdot)0\|_H^k  \le C(k)  \E \int_{s}^{0} e^{r \frac{c k}{4}} (C+ f_r)^\frac{k}{2} dr \le C < \infty.
  \end{align*}  
  Fatou's Lemma  yields
    \[ \E \|u_0\|_H^k \le \liminf_{n \to \infty} \E \|X(0,s_n;\cdot)0\|_H^k \le C < \infty\]
  and thereby \eqref{eqn:OU_bound}.

  (ii): By similar calculations as for \eqref{eqn:OU_bound} and by using Burkholder's inequality we obtain 
      $$\E \sup_{t \in [0,1]} \|u_t\|_H^k \le C\left(\E \|u_0\|^k_H + \E \int_0^1 f_r^\frac{k-2+\a}{\a} dr \right)< \infty,$$
      i.e.\ $\sup_{t \in [0,1]} \|u_0(\t_t\cdot)\|_H^k \in L^1(\O)$. By the dichotomy of linear growth (cf. \cite[Proposition 4.1.3.]{A98}) this implies
    \[ \limsup_{t \to \pm\infty} \frac{\|u_0(\t_t \o)\|_H^k}{|t|} = 0,\]
    on a $\t$-invariant set of full $\P$-measure.
\end{proof}

\subsection{Generation of stochastic flows (Theorem \ref{thm:generation})}\label{sec:generation}

\begin{proof}[Proof of Theorem \ref{thm:generation}]
  (i): We consider \eqref{eqn:transformed_spde} as an $\o$-wise random PDE. This viewpoint will be used to define the associated stochastic flow. In order to apply variational methods, we first use another transformation to cancel the linear term $\mu z_r Z(r,s;\o)x$ appearing in \eqref{eqn:transformed_spde}. Let $k(s,t;\o) := e^{- \mu \int_s^t z_r(\o) dr}$. Then $\tilde{Z}(t,s;\o)x := k(s,t;\o)Z(t,s;\o)x$ satisfies
    \[ \tilde{Z}(t,s;\o)x = x + \int_s^t k(s,r;\o) A_\o(r,k(s,r;\o)^{-1} \tilde{Z}(r,s;\o)x) dr, \quad \forall t \ge s, \]
  if and only if $Z(t,s;\o)x$ satisfies \eqref{eqn:transformed_spde}. In order to obtain the existence of a unique solution to \eqref{eqn:transformed_spde} for each fixed $(\o,s) \in \O \times \R$, we thus need to verify the assumptions $(H1)$--$(H4)$  (cf.\ Appendix \ref{app:var_spde}) for $(t,v) \mapsto k(s,t;\o) A_\o(t,k(s,t;\o)^{-1} v)$. These properties follow immediately from the respective properties for $A_\o(t,v)$. We will check $(H1)$--$(H4)$ for $A_\o(t,v)$ on each bounded interval $[S,T] \subseteq \R$ and for each fix $\o \in \O$. For ease of notation we suppress the $\o$-dependency of the coefficients occurring in the following calculations. $(H1)$ is immediate from $(A1)$ for $A$.

  $(H2)$: For $v_1,v_2 \in V$, $\o \in \O$ and $t \in [S,T]$ such that $u_t(\o) \in V$:
    \begin{align*}
    &2{  }_{V^*}\< A_\o(t,v_1)- A_\o(t,v_2), v_1-v_2\>_V \\
    &= \mu_t^2 2{  }_{V^*}\<  A \left(t, \mu_t^{-1} (v_1 + u_t)\right)- A \left(t, \mu_t^{-1} (v_2 + u_t) \right), 
         \mu_t^{-1}(v_1 + u_t) -\mu_t^{-1} (v_2 + u_t)\>_V \\
    &\le C(t) \|v_1 - v_2\|_H^2. 
    \end{align*} 
    For $t \in \R$ such that $u_t(\o) \not\in V$ the same calculation holds. This yields $(H2)$ on $[S,T]$ for $A_\o(t,v)$ by local boundedness of $C(t)$.

  $(H3)$: For $v \in V$, $\o \in \O$ and $t \in \R$ such that $u_t(\o) \in V$: 
  \begin{align}\label{eqn:H3-1}
    2{  }_{V^*}\< A_\o(t,v), v \>_V  
    &= 2 \mu_t^2{  }_{V^*}\< A \left(t, \mu_t^{-1} (v + u_t)\right),\mu_t^{-1}(v+ u_t) \>_V \nonumber\\
      &\hskip10pt -2 \mu_t {  }_{V^*}\< A \left(t, \mu_t^{-1} (v + u_t) \right),u_t \>_V \nonumber\\
      &\hskip10pt + 2 \mu z_t {  }_{V^*}\< u_t , v \>_V + 2 {  }_{V^*}\< M(u_t), v \>_V \nonumber\\
    &\le C(t) \|v + u_t\|_H^2 - c(t) \mu_t^{2-\a} \|v + u_t\|_V^\a + \mu_t^2 f(t) \\
      &\hskip10pt + 2 \mu_t \|A \left(t, \mu_t^{-1} (v + u_t) \right)\|_{V^*} \|u_t\|_V \nonumber\\
      &\hskip10pt + 2 \mu z_t {  }_{V^*}\<u_t,v\>_V + 2 {  }_{V^*}\< M(u_t), v \>_V. \nonumber
  \end{align}
  Using Young's inequality for all $\ve_1,\ve_2,\ve_3 >0$ and some $C_{\ve_1},C_{\ve_2},C_{\ve_3}$ we obtain
  \begin{align*}
    &2 \mu_t \|A \left(t, \mu_t^{-1} (v + u_t) \right)\|_{V^*} \|u_t\|_V  \\
    &\le \ve_1 \mu_t^2 \|A \left(t, \mu_t^{-1} (v + u_t) \right)\|_{V^*}^{\frac{\a}{\a-1}} + C_{\ve_1} \mu_t^{2-\a} \|u_t\|_V^\a &&\\
    &\le \ve_1 C(t) \mu_t^{2-\a}  \|v + u_t\|_V^\alpha + \ve_1 \mu_t^2 f(t) + C_{\ve_1} \mu_t^{2-\a} \|u_t\|_V^\a
  \end{align*}
  and
  \begin{align*}
  2  {  }_{V^*}\< M(u_t), v \>_V
  &\le C_{\ve_2} 2 \mu_t^{\frac{2-\a}{1-\a}}\|M(u_t)\|_{V^*}^{\frac{\a}{\a-1}} + 2 \ve_2 \mu_t^{2-\a} \| v\|_V^\a \\
  &\le C_{\ve_2} 2 \mu_t^{\frac{2-\a}{1-\a}} \Big( C_{M} \|u_t\|_V^\a + f_M \Big) + 2 \ve_2 \mu_t^{2-\a} \| v\|_V^\a, &&
  \end{align*}
  as well as
  \begin{align*}
  2 \mu z_t {  }_{V^*}\<u_t,v\>_V 
  \le C_{\ve_3} \mu^2 z_t^2 \|u_t\|_H^2 +  \ve_3\|v\|_H^2. &&
  \end{align*}
  Using this in \eqref{eqn:H3-1} yields
  \begin{align*}
      2{  }_{V^*}\< A_\o(t,v), v \>_V  
    &\le C(t) \|v + u_t\|_H^2 - (c(t)-\ve_1 C(t)) \mu_t^{2-\a} \|v + u_t\|_V^\a + \mu_t^2 f(t) \\
      &\hskip10pt + \ve_1 \mu_t^2 f(t) + C_{\ve_1} \mu_t^{2-\a} \|u_t\|_V^\a + C_{\ve_3} \mu^2 z_t^2 \|u_t\|_H^2 + \ve_3 \|v\|_H^2 \nonumber\\
      &\hskip10pt + C_{\ve_2} 2 \mu_t^{\frac{2-\a}{1-\a}} \big( C_{M} \|u_t\|_V^\a + f_M \big)+ 2 \ve_2 \mu_t^{2-\a} \| v\|_V^\a  .\nonumber
  \end{align*}
  Using
    \[ \|v+u_t(\o)\|_V^\a \ge  2^{1-\a}\|v\|_V^\a - \|u_t(\o)\|_V^\a\]
  we obtain (for $\ve_1$ small enough):
  \begin{align*}
    2{  }_{V^*}\< A_\o(t,v), v \>_V 
    &\le (2C(t) + \ve_3) \|v\|_H^2 - \Big(2^{1-\a}c(t)- \ve_1 2^{1-\a} C(t) - 2 \ve_2 \Big) \mu_t^{2-\a} \|v\|_V^\a  \\
      &\hskip10pt + \mu_t^2 f(t) + \ve_1 \mu_t^2 f(t) + \mu_t^{2-\a}\Big(C_{\ve_1}  + c(t) - \ve_1 C(t)\Big)\|u_t\|_V^\a  \nonumber\\
        &\hskip10pt + C_{\ve_2} 2 \mu_t^{\frac{2-\a}{1-\a}} \Big( C_{M} \|u_t\|_V^\a + f_M \Big) + \Big(2C(t) + C_{\ve_3} \mu^2 z_t^2 \Big) \|u_t\|_H^2 .\nonumber
  \end{align*}
  Now choosing $\ve_1,\ve_2$ small enough yields
  \begin{align}\label{eqn:transf_coerc}
    \ _{V^*}\< A_\o(t,v), v \>_V  &\le \tilde{C}(t) \|v\|_H^2 - \tilde{c}(t) \|v\|_V^\a + \tilde{f}(t),
  \end{align}
  with 
  \begin{align*}
    \tilde{f}(t) &:=  \mu_t^2 f(t) + \ve_1 \mu_t^2 f(t) + \mu_t^{2-\a}\Big(C_{\ve_1}  + c(t) - \ve_1 C(t)\Big)\|u_t\|_V^\a  \nonumber\\
        &\hskip10pt + C_{\ve_2} 2 \mu_t^{\frac{2-\a}{1-\a}} \Big( C_{M} \|u_t\|_V^\a + f_M \Big) + \Big(2C(t) + C_{\ve_3} \mu^2 z_t^2 \Big) \|u_t\|_H^2.
  \end{align*}
  By pathwise continuity of $u_t, \mu_t, z_t$, local boundedness of $c(t),C(t)$ and since $u_\cdot(\o) \in L^\a_{loc}(\R;V)$ for {\it all} $\o \in \O$ we have $\td f \in L^1_{loc}(\R)$. By choosing $\ve_1, \ve_2$ small enough we obtain $(H3)$ on $[S,T] \subseteq \R$. For $t \in \R$ such that $u_t(\o) \not\in V$ the same calculation proves $(H3)$.

  $(H4)$: For $v \in V$, $\o \in \O$ and $t \in \R$ such that $u_t(\o) \in V$: 
  \begin{align}
    \|A_\o(t,v)\|_{V^*}^{\frac{\a}{\a-1}}
    &=\| \mu_t A \left(t, \mu_t^{-1} (v + u_t) \right) + \mu u_t z_t + M(u_t) \|_{V^*}^{\frac{\a}{\a-1}} \nonumber \\
    &\le C \left( \|\mu_t A \left(t, \mu_t^{-1} (v + u_t) \right)\|_{V^*}^{\frac{\a}{\a-1}} + \left( \mu |z_t| \|u_t \|_{V^*} \right)^{\frac{\a}{\a-1}} + \| M(u_t) \|_{V^*}^{\frac{\a}{\a-1}} \right)   \nonumber\\
    &\le C \mu_t^{\frac{\a}{\a-1}} \Big( C(t) \mu_t^{-\a} \|v + u_t\|_V^\a + f(t) \Big) + C \left( \mu z_t \|u_t \|_{V^*} \right)^{\frac{\a}{\a-1}} \nonumber\\
    &\hskip15pt + C C_{M}(\|u_t\|_V^\a + 1) \nonumber\\
    &\le \td C(t) \|v\|_V^\a + \tilde{f}_t,  \nonumber
  \end{align}
  with 
  \begin{align*}
    \tilde{f}(t) 
      &:= C \mu_t^{\frac{\a}{\a-1}} f(t) + C C(t) \mu_t^{\frac{\a}{\a-1}-\a} \|u_t\|_V^\a + C \left( \mu z_t \|u_t \|_{V^*} \right)^{\frac{\a}{\a-1}} + C C_{M}(\|u_t\|_V^\a + 1).
  \end{align*}
  As before this yields $(H4)$ on $[S,T]$. For $t \in \R$ such that $u_t \not\in V$ the same calculation proves $(H4)$.
  
  Hence $(H1)$--$(H4)$ are satisfied for $A_\o$ for each $\o \in \O$ and by Theorem \ref{thm:var_ex} (applied to the deterministic situation) we obtain the existence of a unique solution 
  \begin{equation*}\label{eqn:def_transf_flow}
    Z(\cdot,s;\o)x \in L^\a_{loc}([s,\infty);V) \cap C([s,\infty);H) 
  \end{equation*}
  to \eqref{eqn:transformed_spde} for all $(s,\o,x) \in \R \times \O \times H$. By uniqueness for \eqref{eqn:transformed_spde} we have the flow property
  \begin{equation*}\label{eqn:flow_property}
    Z(t,s;\o)x = Z(t,r;\o)Z(r,s;\o)x.
  \end{equation*}
  By Proposition \ref{prop:def_conj_flow} the family of maps given by
  \begin{equation*}\label{def:flow}
    S(t,s;\o)x := T(t,\o) \circ Z(t,s;\o) \circ T^{-1}(s,\o)
  \end{equation*}
  defines a stochastic flow.

  (ii): Since $T(t,\o)y$ is continuous in $t$ locally uniformly in $y$, $t \mapsto S(t,s;\o)x$ is continuous. \cite[Proposition 4.2.10]{PR07} implies
    \[ \|Z(t,s;\o)x - Z(t,s;\o)y\|^2_H \le e^{C(t-s)}\|x-y\|_H^2, \]
  for bounded $s \le t$. Thus $x \mapsto Z(t,s;\o)x$ is continuous locally uniformly in $s,t$. Moreover,
  \begin{align*}
    \|Z(t,s_1;\o)x - Z(t,s_2;\o)x\|^2_H 
    &= \|Z(t,s_2;\o)Z(s_2,s_1;\o)x - Z(t,s_2;\o)x\|^2_H  \\
    &\le e^{C(t-s_2)}\|Z(s_2,s_1;\o)x-x\|_H^2,\ \forall s_1 < s_2, 
  \end{align*}
  which implies right-continuity of $s \mapsto Z(t,s;\o)x$. This implies right continuity of $s \mapsto S(t,s;\o)x$ and continuity of $x \mapsto S(t,s;\o)x$ locally uniformly in $s,t$.
  
  (iii): If $A$ is $(\mcB(\R)\otimes\mcB(V)\otimes\mcF,\mcB(V^*))$-measurable then measurability of $Z(t,s;\o)$, $S(t,s;\o)$ follows as in the proof of \cite[Theorem 1.1]{GLR11}. 

  The same argument proves $\bar\mcF_t$-adaptedness of $Z(t,s;\o)x$ if $A$ is $\bar\mcF_t$-adapted. Applying the transformation backwards then shows that $S(t,s;\o)x$ is a solution to \eqref{eqn:spde2}.
  
  (iv): Assume that $A(t,v;\o)$ is strictly stationary. Strict stationarity of $z_t$ and $u_t$ then implies $A_\o(t,v) = A_{\t_t\o}(0,v)$. By uniqueness for \eqref{eqn:transformed_spde} we deduce
  \begin{align*}
    Z(t,s;\o)x = Z(t-s,0;\t_s \o)x
  \end{align*}
  and $Z(t,s;\o)$ is a cocycle. Since $T(t,\o)$ is a stationary conjugation the same follows for $S(t,s;\o)$.
\end{proof}


\subsection{Existence of Random Attractors (Theorem \ref{thm:superlinear_ra})}

For $\eta: \O \to \R_+\setminus\{0\}$ let $\mcD^{\eta}$ be the set of all sets of subexponential growth of order $\eta$, i.e.\  $\{D(t,\o)\}_{t \in \R, \o \in \O} \in \mcD^{\eta}$ iff $\|D(s,\o)\|_H^2 = O(e^{\eta(\o) s})$ for $s \to -\infty$ and all $\o \in \O$. Define $B(x,r):=\{y \in H|\ \|x-y\|_H \le r\}.$ First, we will prove bounded absorption for the stochastic flows $S(t,s;\o)$, $Z(t,s;\o)$. 

\begin{proposition}[Bounded absorption]\label{prop:bdd_abs}
  Assume $(A1)$--$(A4)$ and $(V)$ with $c, C$ being independent of time $t$ and $f$ exponentially integrable. If $\a =2$, additionally assume $C < \frac{c}{4\l}$ in $(A3)$. Then there is an $\eta: \O \to \R_+\setminus\{0\}$ and a $\mcD^{\eta}$-absorbing family of bounded sets $\{F(t,\o)\}_{t \in \R, \o \in \O}$ for $S(t,s;\o)$ and $Z(t,s;\o)$. More precisely, there is a function $R: \R \times \O \to \R_+\setminus\{0\}$ such that for all $D \in \mcD^\eta$ there is an absorption time $s_0=s_0(D,t;\o) \le t$ such that
   \begin{equation}\label{eqn:abs_by_ball}
      Z(t,s;\o)D(s,\o) \subseteq B(0,R(t,\o)),
   \end{equation}    
  for all $s \le s_0$, $\P$-almost surely.
\end{proposition}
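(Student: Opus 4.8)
The plan is to obtain the bound $\o$-wise from an energy inequality for $t\mapsto\|Z(t,s;\o)x\|_H^2$, which is then fed into the a priori bound Lemma~\ref{lemma:comp_superlinear} when $\a>2$ and into Gronwall's inequality when $\a=2$; afterwards the bound for $S$ is read off from the conjugation $S(t,s;\o)=T(t,\o)\circ Z(t,s;\o)\circ T^{-1}(s,\o)$. Concretely: for each fixed $(\o,s)$, $Z(\cdot,s;\o)x\in L^\a_{loc}([s,\infty);V)\cap C([s,\infty);H)$ solves the \emph{deterministic} evolution equation \eqref{eqn:transformed_spde}, whose drift $v\mapsto A_\o(r,v)+\mu z_r(\o)v$ was shown in the proof of Theorem~\ref{thm:generation} to satisfy $(H1)$--$(H4)$. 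The chain rule in the Gelfand triple therefore gives that $t\mapsto\|Z(t,s;\o)x\|_H^2$ is absolutely continuous with $\frac{d}{dt}\|Z(t,s;\o)x\|_H^2=2\ {}_{V^*}\<A_\o(t,Z(t,s;\o)x),Z(t,s;\o)x\>_V+2\mu z_t(\o)\|Z(t,s;\o)x\|_H^2$ for a.e.\ $t\ge s$. Inserting the coercivity estimate \eqref{eqn:transf_coerc} and using $\|v\|_V^\a\ge\l^{-\a/2}\|v\|_H^\a$, we get for $y(t):=\|Z(t,s;\o)x\|_H^2$ and $\b:=\frac{\a}{2}\ge1$ the $\o$-wise inequality
$$y'(t)\le 2\big(\td C+\mu z_t(\o)\big)y(t)-2\l^{-\b}\td c(t)\,y(t)^\b+2\td f(t),\qquad\text{a.e. }t\ge s,$$
where, since $c,C$ are now time-independent, $\td C$ is a constant, $\td c(t)$ is a positive constant times $e^{\mu(\a-2)z_t(\o)}$ (a constant if $\a=2$), and $\td f(\cdot)(\o)\in L^1_{loc}(\R)$ is built from $f(t)$, $\|u_t(\o)\|_V^\a$, $\|u_t(\o)\|_H^2$, $z_t(\o)^2$ and positive powers of $e^{\mp\mu z_t(\o)}$.

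Case $\a>2$ (so $\b>1$). By Young's inequality $2(\td C+\mu z_t)y\le\l^{-\b}\td c(t)y^\b+C\td c(t)^{-\frac1{\b-1}}|\td C+\mu z_t|^{\frac{\b}{\b-1}}$, so $y$ is a subsolution of $y'=-h(t)y^\b+p(t)$ with $h(t):=\l^{-\b}\td c(t)\in C(\R)$ positive and $p$ nonnegative, c\`adl\`ag and in $L^1_{loc}(\R)$. Since $e^{\mu(\a-2)z_r}$ is strictly stationary with $\E e^{\mu(\a-2)z_0}\in(0,\infty)$, Birkhoff's ergodic theorem gives $\int_s^t h(r)\,dr\to\infty$ as $s\to-\infty$, $\P$-a.s. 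Lemma~\ref{lemma:comp_superlinear} then produces, for each $t$, an absorption time $s_0=s_0(t;\o)$ and a radius $R(t,\o)>0$ such that $\|Z(t,s;\o)x\|_H^2\le R(t,\o)^2$ for all $s\le s_0$ and all $x\in H$; in particular \eqref{eqn:abs_by_ball} holds for \emph{every} family $D$, with no growth restriction.

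Case $\a=2$ (so $\b=1$, $\td c(t)\equiv\td c$ constant). Here the hypothesis $C<\frac{c}{4\l}$ lets us choose the Young parameters in the proof of Theorem~\ref{thm:generation} so small that $\delta:=\l^{-1}\td c-\td C>0$. Then $y'(r)\le(2\mu z_r(\o)-2\delta)y(r)+2\td f(r)$, so by Gronwall $y(t)\le e^{\int_s^t(2\mu z_r-2\delta)\,dr}y(s)+2\int_s^t e^{\int_r^t(2\mu z_\tau-2\delta)\,d\tau}\td f(r)\,dr$. Since $\int_s^t z_r(\o)\,dr=o(|s|)$ as $s\to-\infty$ ($\P$-a.s., by Birkhoff, $\E z_0=0$), there is $s_1=s_1(t;\o)$ with $\int_r^t(2\mu z_\tau-2\delta)\,d\tau\le-\delta(t-r)$ for $r\le s_1$, while on $[s_1,t]$ this exponent is bounded by a constant $c(t,\o)$. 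Set $R(t,\o)^2:=1+2\int_{-\infty}^t e^{-\delta(t-r)}\td f(r)\,dr$ (with the weight adjusted on $[s_1,t]$): this integral is finite $\P$-a.s.\ because the $f$-terms are controlled by the exponential integrability of $f$ together with the $\P$-a.s.\ sublinear growth of $z_\cdot(\o)$ (making the prefactors $e^{c\mu z_r}$ subexponential), the $\|u_r\|_V^\a$-term by \eqref{eqn:u_V_bound}, and the $\|u_r\|_H^2z_r^2$- and remaining terms by Theorem~\ref{thm:nonlinear_OU_prop}(ii). If now $D\in\mcD^\eta$ with $0<\eta<\delta$, then $\|D(s,\o)\|_H^2=O(e^{\eta|s|})$ and $e^{\int_s^t(2\mu z_r-2\delta)\,dr}\|D(s,\o)\|_H^2=O(e^{-\delta(t-s)+\eta|s|})\to0$ as $s\to-\infty$, so there is $s_0=s_0(D,t;\o)\le s_1$ with $\|Z(t,s;\o)x\|_H^2\le R(t,\o)^2$ for all $s\le s_0$, $x\in D(s,\o)$, i.e.\ \eqref{eqn:abs_by_ball} holds.

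For $S$: writing $T(t,\o)y=\mu_t(\o)y-u_t(\o)$ we have $S(t,s;\o)D(s,\o)=T(t,\o)^{-1}Z(t,s;\o)\big(T(s,\o)D(s,\o)\big)$, and $\|T(s,\o)D(s,\o)\|_H^2\le 2\mu_s(\o)^2\|D(s,\o)\|_H^2+2\|u_s(\o)\|_H^2$; since $\mu_s^2=e^{-2\mu z_s}$ is subexponential and $\|u_s\|_H^2$ grows sublinearly, this family is again absorbed by $Z$ into $B(0,R(t,\o))$ (for $\a>2$ nothing is needed; for $\a=2$ it lies in $\mcD^{\eta'}$ for some $\eta'\in(\eta,\delta)$). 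Hence for $s$ small, $S(t,s;\o)D(s,\o)\subseteq T(t,\o)^{-1}B(0,R(t,\o))\subseteq B\big(0,\mu_t(\o)^{-1}(R(t,\o)+\|u_t(\o)\|_H)\big)=:B(0,R_S(t,\o))$, a bounded set, and $F(t,\o):=B\big(0,R(t,\o)\vee R_S(t,\o)\big)$ is the desired $\mcD^\eta$-absorbing family for both $Z$ and $S$. The main obstacle is the case $\a=2$: with only linear dissipation one must use the sharp hypothesis $C<c/(4\l)$ to make the transformed net drift strictly dissipative, use Birkhoff to render $\int_s^t z_r\,dr$ negligible against $-\delta(t-s)$, and — the most technical point — verify that the forcing term $\td f$, which carries all the noise contributions through $u_t$, $z_t$ and $\mu_t$, is integrable against the weight $e^{-\delta(t-r)}$, which is precisely where the exponential integrability of $f$, the bound \eqref{eqn:u_V_bound}, and the sublinear-growth statements of Theorem~\ref{thm:nonlinear_OU_prop} are simultaneously needed.
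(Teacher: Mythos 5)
Your $\a=2$ branch and your transfer from $Z$ to $S$ are essentially the paper's argument: the same linear differential inequality with random rate $-\td c\,\mu_t^{2-\a}(\o)+\mu z_t(\o)$, Birkhoff's ergodic theorem to make the time-averaged rate negative, pathwise exponential integrability of the forcing $\td f$ (via \eqref{eqn:u_V_bound}, the sublinear growth from Theorem \ref{thm:nonlinear_OU_prop} and the subexponential growth of $\mu_t$, $z_t$), Gronwall as in \eqref{eqn:bdd_abs}, and the observation that $T(t,\o)^{\pm1}$ only distorts sets by subexponential factors; you also identify correctly how $C<\frac{c}{4\l}$ enters.

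The gap is in your $\a>2$ branch. You feed the inequality $y'\le -h(t)y^\b+p(t)$ into Lemma \ref{lemma:comp_superlinear}, asserting that $p$ is c\`adl\`ag; it is not. Your $p$ contains $\td f$, hence $f(t,\o)$ and $\|u_t(\o)\|_V^\a$, which are only locally integrable in $t$ (in general locally unbounded), and the lemma genuinely needs pointwise regularity of $p$: its proof uses right-continuity of $p$ at the time $a(s)$, and the absorption radius it produces is $R(t,p,h)=\sup_{r\in[a_1-1,t]}\bigl(2p(r)/h(r)\bigr)^{1/\b}$, which is $+\infty$ for merely integrable $p$. So the step that gives you ``absorption of every family, with no growth restriction'' fails as written; you would need an $L^1$-forcing version of the a priori bound (e.g.\ comparing $y$ with $y_1+y_2$, where $y_1'=-hy_1^\b$ and $y_2'=p$, $y_2(t-1)=0$), which is not what Lemma \ref{lemma:comp_superlinear} provides. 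The paper avoids this entirely: for $\a>2$ it also stays with a \emph{linear} inequality, using Young's inequality to convert $-\td c(t)\|v\|_V^\a$ into $-\mathrm{const}\cdot\mu_t^{2-\a}\|v\|_H^2+\mathrm{const}$ (no smallness condition needed, since the Birkhoff average of $\mu_t^{2-\a}$ is strictly positive), and then runs exactly the Gronwall--Birkhoff argument of your $\a=2$ case; this yields the $\mcD^\eta$-absorption asserted in the proposition, which is all that is claimed. If you repair your $\a>2$ case this way (or prove the modified comparison lemma), the rest of your argument stands.
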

\begin{proof}
  Let $t \in \R$. From \eqref{eqn:transf_coerc} and using the assumption in case $\a=2$ we obtain
    $$  \ _{V^*}\< A_\o(t,v), v \>_V  \le - \tilde{c}(\o) \mu_t^{2-\a}(\o) \|v\|_H^2 + \tilde{f}(t,\o),$$
  for some $\td c(\o) > 0$ and 
  \begin{align*}
    \tilde{f}(t,\o) 
    &=  C\mu_t^2(\o) f(t,\o)  + C \mu_t^{2-\a}(\o)\|u_t(\o)\|_V^\a + C \mu_t^{\frac{2-\a}{1-\a}}(\o) \Big( \|u_t(\o)\|_V^\a + 1 \Big)\\
       &+ C\Big(1 + z_t^2(\o) \Big) \|u_t(\o)\|_H^2 + C\mu_t^2(\o).
  \end{align*}
  Hence,
      $$ 2\dualdel{V}{A_\o(t,v)+\mu z_t(\o)v}{v} \le \left( - \td c(\o) \mu_t^{2-\a}(\o) + \mu z_t(\o)\right) \|v\|_H^2 + \td f(t,\o),\quad \forall v\in V$$
  and for a.e.\ $t \ge s$ we obtain
  \begin{align*}
    \frac{d}{dt} \|Z(t,s;\o)x\|_H^2 
    &= \ _{V^*}\< A_{\o}(t,Z(t,s;\o)x)+\mu z_t Z(t,s;\o)x, Z(t,s;\o)x \>_V  \\
    &\le \left(- \td c(\o) \mu_t^{2-\a}(\o) + \mu z_t(\o) \right) \|Z(t,s;\o)x\|_H^2 + \td f(t,\o).
  \end{align*}
  By Birkhoff's ergodic theorem and dichotomy of linear growth \cite[Proposition 4.1.3]{A98}, there is a subset $\O_0 \subseteq \O$ of full $\P$-measure such that $\frac{1}{t-s}\int_s^t \mu z_\tau(\o) d\tau \to \mu\E[z_0] = 0$, $\lim_{t \to \pm \infty}\frac{|z_t(\o)|}{|t|} \to 0$ and $\frac{1}{t-s}\int_s^t \mu_\tau^{2-\a}(\o) d\tau \to \E[\mu_0^{2-\a}]$ for $s \to -\infty$ and all $\o \in \O_0$. Therefore, $\mu_t(\o) = e^{\mu z_t(\o)}$ is of subexponential growth and $\td f$ is exponentially integrable, $\P$-almost surely. Furthermore, there is an $s_0(\o)\le 0$ such that
    $$\frac{1}{t-s}\int_s^t \left(- \td c(\o) \mu_\tau^{2-\a}(\o) + \mu z_\tau(\o)\right)\ d\tau \le - \eta(\o),$$
  for all $s \le s_0(\o)$, $\o \in \O_0$ and some $\eta: \O \to \R\setminus\{0\}$. Let $D \in \mcD^{\eta}$, $x_s(\o) \in D(s,\o)$, i.e.\ there is a function $C: \O \to \R$ such that $\|D(s,\o)\|_H^2 e^{\eta(\o) s} \le C(\o)$ for all $s$ small enough. For some $\td s_0 = \td s_0(D,t;\o)$ using Gronwall's inequality we obtain 
  \begin{equation}\begin{split}\label{eqn:bdd_abs}
     \|Z(t,s;\o)D(s,\o)\|_H^2 
     &\le \|D(s,\o)\|_H^2 e^{-\eta(\o) (t-s)} + \int_s^{s_0} e^{- \eta(\o) (t-r)} \td f(r,\o)dr \\
      &\hskip12pt + \int_{s_0}^t e^{\int_r^t \left(- \td c(\o) \mu_\tau^{2-\a}(\o) + \mu z_\tau(\o)\right) d\tau} \td f(r,\o)dr \\
     &\le C(\o)e^{- \eta(\o) t} + \int_{-\infty}^{s_0} e^{- \eta(\o) (t-r)} \td f(r,\o)dr \\
      &\hskip12pt + \int_{s_0}^t e^{\int_r^t \left(- \td c(\o) \mu_\tau^{2-\a}(\o) + \mu z_\tau(\o)\right) d\tau} \td f(r,\o)dr \\
     &=: R(t,\o), \quad \forall s \le \td s_0,\ \P-\text{a.s.,}
  \end{split}\end{equation}
  where finiteness of the second summand follows from exponential integrability of $\td f$.
  
  Since $T(t,\o)$ is a bounded map of subexponential growth this implies bounded absorption for $S(t,s;\o)$.
\end{proof}

\begin{proof}[Proof of Theorem \ref{thm:superlinear_ra}]
  (i): Compactness of the stochastic flows $S(t,s;\o)$, $Z(t,s;\o)$ follows as in \cite{G11d}.

  (ii): We prove that $Z(t,s;\o)$ is $\mcD$-asymptotically compact. By Proposition \ref{prop:bdd_abs} there is an $\eta: \O \to \R\setminus\{0\}$ and a bounded $\mcD^\eta$-absorbing set $F$. Let
    \[ K(t,\o) := \overline{Z(t,t-1;\o)F(t-1,\o)},\ \forall t \in \R,\ \o \in \O. \]
  Since $F(t-1,\o)$ is a bounded set and $Z(t,s;\o)$ is a compact flow, $K(t,\o)$ is compact. Furthermore, $K(t,\o)$ is $\mcD^\eta$-absorbing:
  \begin{align*}
    Z(t,s;\o)D(s,\o) 
    &= Z(t,t-1;\o)Z(t-1,s;\o)D(s,\o) \\
    &\subseteq Z(t,t-1;\o)F(t-1,\o) \subseteq K(t,\o),
  \end{align*}
  for all $s \le s_0$, $\P$-almost surely. By Theorem \ref{thm:suff_cond_attr} and Theorem \ref{thm:conj_attractor} this yields the existence of random $\mcD^\eta$-attractors $\mcA_Z,\mcA_S$ for $Z(t,s;\o)$, $S(t,s;\o)$ respectively. Restricting the domain of attraction of $\mcA_Z, \mcA_S$ to the system of all tempered sets $\mcD$ thus yields the claim. 

  (iii): Let now $A$ be $(\mcB(\R)\otimes\mcB(V)\otimes\mcF,\mcB(V^*))$-measurable and $c,C$ in $(A3)$ be $(\mcF,\mcB(\R))$-measurable. Then $\eta$ in  Proposition \ref{prop:bdd_abs} can be chosen $(\mcF,\mcB(\R))$-measurable and we define 
    $$\mcD_0 :=\left\{ \{D^n(t,\o) := B(0,e^{\eta(\o) |t|}n)\}_{t \in \R,\o \in \O}|\ n \in \N \right\}.$$
  Then
    $$ \mcA_Z(t,\o) = \overline{\bigcup_{n\in\N}\O(D^n,t;\o)}$$
  and Theorem \ref{thm:suff_cond_attr} implies measurability of the random $\mcD^{\eta}$-attractors. 

  (iv): Now assume $A$ to be strictly stationary. Then $Z(t,s;\o)$ and $S(t,s;\o)$ are cocycles and $K$ is a $\mcD^b$-absorbing compact set. Hence, there exist measurable, strictly stationary random $\mcD^b$-attractors for $Z(t,s;\o)$ and $S(t,s;\o)$ by Theorem \ref{thm:suff_cond_attr}.
\end{proof}

\subsection{Stable equilibria (Theorem \ref{thm:equil})}\label{sec:stationary_soln}

We will need the following
\begin{lemma}\label{lemma:integral_div}
  Let $\b$ be a real-valued Brownian motion and $q,t \in \R$. Then
    \[ \int_s^t e^{q (\b_r-\b_t)} dr \to \infty,\quad \P\text{-a.s. for } s \to -\infty.\]
\end{lemma}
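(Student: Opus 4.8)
The plan is to reduce to a statement about a one-sided Brownian functional and then extract the divergence from the recurrence of an embedded random walk. \emph{Reductions.} The integrand is positive, so $s\mapsto\int_s^t e^{q(\b_r-\b_t)}\,dr$ is nondecreasing as $s\downarrow-\infty$ and its limit exists in $[0,\infty]$; we must show it is $\P$-a.s.\ equal to $+\infty$. The substitution $r\mapsto r-t$ together with $\gamma_r:=\b_{r+t}-\b_t$ (again a two-sided Brownian motion, with $\gamma_0=0$) turns the integral into $\int_{s-t}^0 e^{q\gamma_r}\,dr$, so we may assume $t=0$ and $\b_0=0$. If $q=0$ the integral equals $-s\to\infty$, and since $-\b$ is a Brownian motion we may assume $q>0$. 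The goal is thus
\[ \int_{-\infty}^0 e^{q\b_r}\,dr=\infty,\qquad\P\text{-a.s.} \]

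\emph{Decomposition.} Write $\int_{-\infty}^0 e^{q\b_r}\,dr=\sum_{n\ge0}e^{q\b_{-n}}Y_n$ with $Y_n:=\int_{-n-1}^{-n}e^{q(\b_r-\b_{-n})}\,dr$. All summands are nonnegative, so it suffices to prove $\limsup_{n\to\infty}e^{q\b_{-n}}Y_n=\infty$ $\P$-a.s. Two structural facts are used. (a) $(\b_{-n})_{n\ge0}$ is a mean-zero random walk with i.i.d.\ standard Gaussian steps, hence $\limsup_{n\to\infty}\b_{-n}=+\infty$ $\P$-a.s., so every level is exceeded for infinitely many $n$. (b) $Y_n$ is a function of the increments of $\b$ over $[-n-1,-n]$ recentred at $\b_{-n}$, which are independent of all increments of $\b$ over $[-n,\infty)$; hence, with $\mathcal{G}_n:=\sigma(\b_0,\b_{-1},\dots,\b_{-n})\vee\sigma(Y_0,\dots,Y_{n-1})$, the variable $Y_n$ is independent of $\mathcal{G}_n$ while $\b_{-n}$ is $\mathcal{G}_n$-measurable. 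Moreover $Y_0=\int_{-1}^0 e^{q\b_r}\,dr>0$ $\P$-a.s., so there is $\delta\in(0,1)$ with $p:=\P[Y_0\ge\delta]>0$.

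\emph{Conclusion via conditional Borel--Cantelli.} Fix $M\in\N$ and put $A_n:=\{e^{q\b_{-n}}Y_n\ge M\}$, which lies in $\mathcal{G}_{n+1}$. By (b), $\P[A_n\mid\mathcal{G}_n]=G\big(Me^{-q\b_{-n}}\big)$ where $G(x):=\P[Y_0\ge x]$ is nonincreasing with $G(\delta)=p$; thus $\P[A_n\mid\mathcal{G}_n]\ge p$ whenever $\b_{-n}\ge q^{-1}\log(M/\delta)$. By (a) this last inequality holds for infinitely many $n$ $\P$-a.s., so $\sum_{n\ge0}\P[A_n\mid\mathcal{G}_n]=\infty$ $\P$-a.s., and Lévy's extension of the Borel--Cantelli lemma gives that $A_n$ occurs infinitely often, i.e.\ $e^{q\b_{-n}}Y_n\ge M$ for infinitely many $n$, $\P$-a.s. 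Letting $M\to\infty$ along $\N$ yields $\limsup_{n\to\infty}e^{q\b_{-n}}Y_n=\infty$ $\P$-a.s., hence $\int_{-\infty}^0 e^{q\b_r}\,dr=\infty$ $\P$-a.s., which is the claim.

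\emph{Main obstacle.} The delicate point is exactly this last step: it is the drift-free (critical) regime, where the pathwise divergence is invisible to moments — indeed $\E\int_{-\infty}^0 e^{q\b_r}\,dr=\int_{-\infty}^0 e^{q^2|r|/2}\,dr=\infty$ says nothing pathwise — and has to be read off from the oscillation $\limsup_n\b_{-n}=+\infty$ of the embedded walk together with the independence of the within-interval contributions $Y_n$ from the walk's past; the conditional Borel--Cantelli lemma is the tool that couples these. (Alternatively one could observe that $\{\int_{-\infty}^0 e^{q\b_r}\,dr=\infty\}$ equals $\{\int_{-\infty}^{-T}e^{q(\b_r-\b_{-T})}\,dr=\infty\}$ for every $T>0$, hence is measurable with respect to the trivial $\sigma$-field $\bigcap_{T>0}\sigma(\b_r-\b_{-T}:r\le-T)$, so has probability $0$ or $1$, and then only rule out $0$; or simply invoke the classical fact that $\int_0^\infty e^{qB_r+br}\,dr$ is $\P$-a.s.\ infinite precisely when $b\ge0$.)
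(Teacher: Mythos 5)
Your argument is correct, but it takes a different route from the paper. The paper's proof is much shorter: since the integrand is positive, the integral is monotone in $s$, so almost sure divergence reduces to divergence in probability; after the same time shift you perform (reducing to $\int_{s}^0 e^{q\b_r}\,dr$ for a Brownian motion started at $0$), divergence in probability is read off from the L\'evy arcsine law, because $\int_{-S}^0 e^{q\b_r}\,dr$ dominates the Lebesgue measure of $\{r\in[-S,0]:q\b_r\ge 0\}$, which is $S$ times an arcsine-distributed variable and hence exceeds any fixed $M$ with probability tending to $1$. You instead prove the almost sure statement directly: decomposing over unit intervals, using that the embedded Gaussian random walk $(\b_{-n})_n$ oscillates to $+\infty$, that the recentred contributions $Y_n$ are i.i.d., strictly positive and independent of the relevant past, and then coupling these via L\'evy's conditional Borel--Cantelli lemma. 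Both arguments are complete; yours trades the arcsine law (a heavier classical input, but a one-line finish) for a more hands-on martingale/Borel--Cantelli argument whose only inputs are the recurrence of the random walk and the freezing of independent increments, and it dispenses with the detour through convergence in probability. Your side remarks (the tail $0$--$1$ law, or citing the known dichotomy for exponential functionals $\int_0^\infty e^{qB_r+br}\,dr$) are also valid alternative finishes.
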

\begin{proof}
  Since $s \mapsto \int_s^t e^{q (\b_r-\b_t)} dr$ is increasing as $s$ decreases, it is enough to prove divergence in probability. Observe
    \[ \int_s^t e^{q (\b_r(\o)-\b_t(\o))} dr = \int_{s-t}^0 e^{q (\b_{r+t}(\o)-\b_t(\o))} dr = \int_{s-t}^0 e^{q \b_{r}(\t_t\o)} dr.\]
  The claim then easily follows from the L\'evy arcsine law.
\end{proof}

\begin{lemma}\label{lemma:contractivity}
  Assume $(V)$ and let $A$ be $\bar\mcF_t$-adapted and satisfy $(A1)$--$(A4)$, $(A2')$. Then there is a set $\O_0 \subseteq \O$ of full $\P$-measure such that
  \begin{align*}
     &\|S(t,s_2;\o)x - S(t,s_1;\o)y\|_H^2 \le 
        \left( \left(\frac{\a}{2}-1 \right)\l(\o) \int_{s_2}^t e^{(\a-2)\mu (\b_r(\o)-\b_t(\o))}  dr \right)^{-\frac{2}{\a-2}}
  \end{align*}
  for all $t \wedge 0 \ge s_2 \ge s_1$, $\o \in \O_0$. In particular, there is an $\mcF$-measurable continuous process $\eta: \R \times \O \to H$ such that
    $$\lim \limits_{s \rightarrow -\infty}S(t,s;\o)x = \eta(t,\o),$$
  for all $x \in H$, $t\in\R$, $\P$-almost surely.
\end{lemma}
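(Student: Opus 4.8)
The plan is to remove the multiplicative noise by a pathwise exponential substitution, after which the additive Wiener noise drops out of the difference of two solutions, and then to feed the resulting superlinear differential inequality into the comparison Lemma~\ref{lemma:comp_1}. Fix $\o$ in a set of full $\P$-measure on which $\l(\o)>0$ and, by Theorem~\ref{thm:generation}, $S(\cdot,s;\o)x$ solves \eqref{eqn:spde2} in the sense of Definition~\ref{def:soln_pathw} for all $s\in\R$, $x\in H$, with $S(\cdot,s;\o)x\in L^\a_{loc}([s,\infty);V)\cap C([s,\infty);H)$. Put $\tilde S(t,s;\o)x:=e^{-\mu\b_t(\o)}S(t,s;\o)x$. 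Rewriting \eqref{eqn:spde2} in It\^o form, $dS_t=\big(A(t,S_t;\o)+\tfrac{\mu^2}{2}S_t\big)\,dt+\mu S_t\,d\b_t+dW_t$, and applying It\^o's formula — equivalently, applying the ordinary product rule to the Stratonovich equation — one checks that the $\b$-driven terms cancel and
\begin{equation*}
  \tilde S(t,s;\o)x = e^{-\mu\b_s(\o)}x + \int_s^t e^{-\mu\b_r(\o)}A(r,S(r,s;\o)x;\o)\,dr + \int_s^t e^{-\mu\b_r(\o)}\,dW_r(\o)
\end{equation*}
as an equation in $V^*$.

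Next I would fix $t\wedge 0\ge s_2\ge s_1$ and $x,y\in H$ and set $w_r:=\tilde S(r,s_2;\o)x-\tilde S(r,s_1;\o)y$ for $r\ge s_2$. Subtracting the two integral equations the stochastic integrals cancel, so $w$ obeys a deterministic (for this fixed $\o$) integral equation in $V^*$ with $w\in L^\a_{loc}([s_2,\infty);V)\cap C([s_2,\infty);H)$ and, by the growth bound $(A4)$, $w'=e^{-\mu\b_r(\o)}\big(A(r,S(r,s_2;\o)x;\o)-A(r,S(r,s_1;\o)y;\o)\big)\in L^{\a/(\a-1)}_{loc}([s_2,\infty);V^*)$. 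The chain rule in the Gelfand triple (a special case of the It\^o formula, \cite[Theorem~4.2.5]{PR07}) then makes $r\mapsto\|w_r\|_H^2$ absolutely continuous with a.e.\ derivative $2e^{-\mu\b_r(\o)}{ }_{V^*}\<A(r,S(r,s_2;\o)x;\o)-A(r,S(r,s_1;\o)y;\o),w_r\>_V$. Since $w_r=e^{-\mu\b_r(\o)}\big(S(r,s_2;\o)x-S(r,s_1;\o)y\big)$, inserting $v_1=S(r,s_2;\o)x$, $v_2=S(r,s_1;\o)y$ into $(A2')$ (so that $v_1-v_2=e^{\mu\b_r(\o)}w_r$) yields
\begin{equation*}
  \tfrac{d}{dr}\|w_r\|_H^2 \le -\l(\o)\,e^{(\a-2)\mu\b_r(\o)}\,\|w_r\|_H^\a,\qquad \text{a.e. } r\ge s_2 .
\end{equation*}

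This is exactly the situation of Lemma~\ref{lemma:comp_1} with $\b=\a/2>1$ (recall $\a>2$ in $(A2')$) and $h(r)=\l(\o)e^{(\a-2)\mu\b_r(\o)}$, which is nonnegative and, by continuity of $\b$, in $L^1([s_2,t])$. Discarding the nonnegative initial term in that lemma gives $\|w_t\|_H^2\le\big(\tfrac{\a-2}{2}\l(\o)\int_{s_2}^t e^{(\a-2)\mu\b_r(\o)}\,dr\big)^{-2/(\a-2)}$; multiplying by $e^{2\mu\b_t(\o)}=\big(e^{-(\a-2)\mu\b_t(\o)}\big)^{-2/(\a-2)}$ and absorbing this factor into the bracket turns $e^{(\a-2)\mu\b_r(\o)}$ into $e^{(\a-2)\mu(\b_r(\o)-\b_t(\o))}$, which — using $\tfrac{\a-2}{2}=\tfrac{\a}{2}-1$ and $\|S(t,s_2;\o)x-S(t,s_1;\o)y\|_H^2=e^{2\mu\b_t(\o)}\|w_t\|_H^2$ — is the asserted estimate.

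For the final assertion, take $y=x$ and let $s_2\to-\infty$ along $s_1\le s_2$: by Lemma~\ref{lemma:integral_div}, $\int_{s_2}^t e^{(\a-2)\mu(\b_r(\o)-\b_t(\o))}\,dr\to\infty$ $\P$-a.s., so the estimate forces $\|S(t,s_2;\o)x-S(t,s_1;\o)x\|_H\to0$, and $\{S(t,s;\o)x\}_{s}$ is Cauchy in $H$ as $s\to-\infty$; set $\eta(t,\o):=\lim_{s\to-\infty}S(t,s;\o)x$, which is independent of $x$ by the estimate with $s_1=s_2=s$. For $t$ in a compact interval $[a,b]$ one has $\int_{s_2}^t e^{(\a-2)\mu(\b_r(\o)-\b_t(\o))}\,dr = e^{-(\a-2)\mu\b_t(\o)}\int_{s_2}^t e^{(\a-2)\mu\b_r(\o)}\,dr\ge\d(\o)\int_{s_2}^a e^{(\a-2)\mu\b_r(\o)}\,dr$, with $\d(\o):=\min_{r\in[a,b]}e^{-(\a-2)\mu\b_r(\o)}>0$, and the right-hand side tends to $\infty$ uniformly in $t\in[a,b]$ (Lemma~\ref{lemma:integral_div} at time $a$); hence $S(\cdot,s;\o)x\to\eta(\cdot,\o)$ uniformly on compacts and $\eta(\cdot,\o)$ is continuous as a locally uniform limit of the continuous maps $t\mapsto S(t,s;\o)x$ (Theorem~\ref{thm:generation}). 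Finally $\eta(t,\cdot)=\lim_{n\to\infty}S(t,-n;\cdot)x$ is a pointwise $\P$-a.s.\ limit of measurable maps, hence measurable; passing to an indistinguishable version gives the $\mcF$-measurable $\eta$. I do not expect a serious obstacle here: the only care needed is the bookkeeping of exponentials — so that $e^{2\mu\b_t}$ is correctly absorbed to produce $e^{(\a-2)\mu(\b_r-\b_t)}$ — and the routine justification of the Gelfand-triple chain rule for $\|w_r\|_H^2$ after the substitution.
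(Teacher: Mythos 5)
Your core computation coincides with the paper's own proof: the same transformation $\td S(t,s;\o)x=e^{-\mu\b_t(\o)}S(t,s;\o)x$, cancellation of the additive noise in the difference of two solutions, insertion of $(A2')$, the comparison Lemma~\ref{lemma:comp_1} with $\b=\a/2$ and $h(r)=\l(\o)e^{(\a-2)\mu\b_r(\o)}$, and Lemma~\ref{lemma:integral_div} for the divergence of the time integral; the bookkeeping of the exponentials is correct and reproduces the stated bound, and the locally uniform convergence argument for the continuity of $\eta$ is fine.

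The one genuine gap is at the very start: you ``fix $\o$ in a set of full $\P$-measure on which $S(\cdot,s;\o)x$ solves \eqref{eqn:spde2} for all $s\in\R$, $x\in H$''. Theorem~\ref{thm:generation} yields the solution property --- and hence your transformed integral identity, which involves the stochastic integrals $\int_s^t \mu S(r,s;\cdot)x\circ d\b_r$ and $\int_s^t e^{-\mu\b_r}\,dW_r$ --- only $\P$-a.s.\ for each \emph{fixed} pair $(s,x)$, with an exceptional set that a priori depends on $(s,x)$; an uncountable intersection of such sets need not have full measure, so a single $\O_0$ valid for all $s_1,s_2,t$ (and all $x,y$), which is precisely what the lemma asserts, is not yet established by your opening sentence. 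The repair is exactly the paper's closing step: first obtain the estimate $\P$-a.s.\ for each fixed $(x,y,s_1,s_2)$ and all $t\ge s_2$ simultaneously (your differential inequality gives this), and then use that both sides of the inequality are continuous in $x,t$ and right-continuous in $s_1,s_2$ (Theorem~\ref{thm:generation}(ii)) to pass from a countable dense set of parameters to all of them on one exceptional set. With that patch the remainder of your argument (Cauchy property, independence of the limit from $x$, continuity and measurability of $\eta$) goes through as written.
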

\begin{proof}
  We consider an alternative transformation of the SPDE \eqref{eqn:spde2}. Since $S(t,s;\o)x$ is a solution of \eqref{eqn:spde2}, 
    $$\td Z(t,s;\o)x := \td\mu_t(\o)S(t,s;\o)x,$$
  with $\td\mu_t(\o) := e^{-\mu \b_t(\o)}$ satisfies
  \begin{equation*}\begin{split}
    \td Z(t_2,s;\o)x &= \td Z(t_1,s;\o)x+\int_{t_1}^{t_2} \td\mu_r(\o) A(r,\td\mu_r(\o)^{-1}\td Z(r,s;\o)x)dr + \int_{t_1}^{t_2} \td\mu_r \circ dW_r(\o) \\
    \td Z(s,s;\o)x &= e^{-\mu\b_s(\o)}x, 
  \end{split}\end{equation*} 
  $\P$-a.s.\ for all $t_2 \ge t_1 \ge s$ (with $\P$-zero set possibly depending on $x$, $s$). By $(A2')$ we obtain
  \begin{align*}
    \frac{d}{dt} \|\td Z(t,s_2;\o)x- \td Z(t,s_1;\o)y\|_H^2 
    &\le - \l(\o) \td\mu_t(\o)^{2-\a} \left( \|\td Z(t,s_2;\o)x- \td Z(t,s_1;\o)y\|_H^2 \right)^\frac{\a}{2},
  \end{align*}
  for a.e.\ $t \ge s_2 \ge s_1$ and by Lemma \ref{lemma:comp_1} to conclude
  \begin{align*}
    \|\td Z(t,s_2;\o)x- \td Z(t,s_1;\o)y\|_H^2 
    &\le \left( \left( \frac{\a}{2}-1 \right)\l(\o) \int_{s_2}^t \td\mu_r(\o)^{2-\a}dr \right)^{-\frac{2}{\a-2}},
  \end{align*}
  for all $t \ge s_2$. Hence,
  \begin{equation}\label{eqn:conv_bound}\begin{split}
     & \|S(t,s_2;\o)x- S(t,s_1;\o)y\|_H^2 \\
     &\hskip20pt \le \left( \left( \frac{\a}{2}-1 \right)\l(\o) \int_{s_2}^t e^{(\a-2)\mu (\b_r(\o)-\b_t(\o)) } dr \right)^{-\frac{2}{\a-2}}, \quad \forall t \ge s_2 \ge s_1,
  \end{split}\end{equation}
 $\P-$almost surely.
  Since both sides of the inequality are continuous in $x,t$ and right-continuous in $s_1,s_2$ the claim follows on a $\P$-zero set independent of $x,s_1,s_2,t$. By Lemma \ref{lemma:contractivity} the right hand side converges to $0$ for $s_2 \to \infty$ and thus $S(t,s;\o)x$ is a Cauchy sequence with limit $\eta(t,\o)$ independent of $x$. 

  Letting $s_1 \to - \infty$ in \eqref{eqn:conv_bound} then yields
  \begin{equation*}
     \|S(t,s;\o)x-\eta(t,\o)\|_H^2 
    \le \left( \left( \frac{\a}{2}-1 \right)\l(\o) \int_{s}^t e^{(\a-2)\mu (\b_r(\o)-\b_t(\o)) } dr \right)^{-\frac{2}{\a-2}},\ \forall t \ge s,
  \end{equation*}
  $\P$-almost surely. In particular, the convergence holds locally uniformly in $t$, which implies continuity of $\eta$.
\end{proof}

\begin{proof}[Proof of Theorem \ref{thm:equil}:]
  By Lemma \ref{lemma:contractivity} we may define
    \[  \mcA(t,\o) := \{\eta(t,\o)\}. \]
  We shall show that this defines a global random $\mcD^g$-attractor for the stochastic flow $S(t,s;\o)$. Since $\eta$ is $\mcF$-measurable, $\mcA(t,\o)$ is a random compact set. It remains to check the invariance and attraction properties for $\mcA(t,\o)$. The continuity of $x \mapsto S(t,s;\o)x$ and the flow property imply that
  \begin{align*}
    S(t,s;\o)\mcA(s,\o)
      &= \left\{ S(t,s;\o) \lim_{r \rightarrow -\infty}S(s,r;\o)x \right\} = \left\{ \lim_{r \rightarrow -\infty} S(t,r;\o)x \right\} = \mcA(t,\o),
  \end{align*}
  for all $t \ge s$, $\o \in \O_0$. For any family of subsets $D \in \mcD^g$ we have
  \begin{align*}
      d(S(t,s;\o)D(s,\o),\mcA(t,\o))
          &= \sup_{x \in D(s,\o)} \|S(t,s,\o)x - \eta(t,\o)\|_H \rightarrow 0, \text{ for } t\rightarrow \infty,
  \end{align*}
  i.e.\ $\mcA(t,\o)$ is $\mcD^g$-attracting. Therefore, $\mcA$ is a random $\mcD^g$-attractor for $S(t,s;\o)$. The bound on the speed of attraction follows immediately from the respective bound in Lemma \ref{lemma:contractivity}.

  If $A$ is strictly stationary then $S(t,s;\o)$ is a cocycle. Hence
    $$\eta(t,\o) = \lim_{s \to -\infty}S(t,s;\o)x = \lim_{s \to -\infty}S(0,s-t;\t_t\o)x = \eta(0,\t_t\o),$$
  on a $\P$-zero set possibly depending on $t$. By \cite[Proposition 2.8]{L01} we can choose a strictly stationary indistinguishable version of $\eta$.
\end{proof}

\subsection{Random attractors for strongly mixing RDS (Remark \ref{rmk:interval=point} \& Theorem \ref{thm:collapse})}

\begin{proof}[Proof of Remark \ref{rmk:interval=point}:]
  By \cite[Proposition 1]{CS04} we know that for $x \le y$, $x,y \in H$ we have 
    $$\|\vp(t,\t_{-t}\cdot)x-\vp(t,\t_{-t}\cdot)y\|_H \xrightarrow{p} 0,\quad \text{for } t \to \infty,$$
  where $\xrightarrow{p}$ denotes convergence in probability. Since $x_+ \ge 0,x$ this implies
  \begin{equation}\label{eqn:0-conv}
    \|\vp(t,\t_{-t}\cdot)x-\vp(t,\t_{-t}\cdot)0\|_H \xrightarrow{p} 0,\quad \text{for } t \to \infty,
  \end{equation}
  for all $x \in H$.
  Let $\mcA$ be a weak point random attractor and $[x,y] \subseteq H$ be an interval in $H$. Since $\vp$ is order-preserving, 
    $$\vp(t;\o)[x,y] \subseteq [\vp(t;\o)x,\vp(t;\o)y],$$
  and thus 
    $$\diam(\vp(t;\o)[x,y]) \le \diam([\vp(t;\o)x,\vp(t;\o)y]).$$
  Since $H_+$ is normal we have 
    $$\diam([\vp(t;\o)x,\vp(t;\o)y]) \le 2c \|\vp(t;\o)x-\vp(t;\o)y\|_H \xrightarrow{p} 0,$$
  by \eqref{eqn:0-conv}. Hence, $\diam(\vp(t;\t_{-t}\o)[x,y]) \xrightarrow{p} 0$ and $d(\vp(t,\t_{-t}\o)x,\mcA(\o)) \xrightarrow{p} 0$ for all $x \in H$, which implies
    $$ d(\vp(t;\t_{-t}\o)[x,y],\mcA(\o)) \xrightarrow{p} 0.$$
\end{proof}

\begin{proof}[Proof of Theorem \ref{thm:collapse}:]
  The proof relies on a modification of the proof of \cite[Proposition 2]{CS04}. Since $\mu(V)=1$, we can find a sequence of compact sets $K_n     \subseteq V$ such that $\mu(K_n) \ge 1-2^{-n-2}$. By the assumptions on $V$ we can choose $f_n, g_n \in H$ such that $K_n \subseteq [f_n,g_n]$. Thus $\mu([f_n,g_n]) \ge 1-2^{-n-2}$. Now we can proceed as in  \cite[Proposition 2]{CS04} to prove the existence of an equilibrium $v(\o)$. I.e. $v: \O \to H$ is an $\F_{-\infty}$-measurable random variable satisfying $\vp(t,\o)v(\o)=v(\t_t\o)$. 

    Without loss of generality we may assume that $0 \in K_n$, thus $0 \in [f_n,g_n]$ for all $n \in \N$. As in \cite[Proposition 2]{CS04} we may then prove that $\vp(t,\t_{-t}\cdot)0 \xrightarrow{p} v$. By \eqref{eqn:0-conv} this implies
    \begin{equation}\label{eqn:weak_attraction}
     \vp(t,\t_{-t}\cdot)x \xrightarrow{p} v,\quad \text{for } t \to \infty.
    \end{equation}
    Hence, $v$ is a weak point random attractor. 

    By \eqref{eqn:weak_attraction} we have $\mcL(\vp(t,\t_{-t}\cdot)x) \to \mcL(v)$ weakly. Hence, $\mu = \mcL(v)$. Since $\mu(V)=1$ this implies $v(\o) \in V$, $\P$-almost surely. 
    
    Let now $\td \mcA$ be another weak point/interval random attractor. For each $\ve > 0$ there is a compact set $K_\ve \subseteq V$ and $\O_\ve \subseteq \O$ such that $v(\o) \subseteq K_\ve$ for all $\o \in \O_\ve$ and $\P[\O_\ve] \ge 1 -\ve$.  Since $V_+$ is $H$-solid, all compact sets in $V$ are attracted by $\td \mcA$ in probability. Thus, 
    \begin{align*}
       d(v(\o),\td \mcA(\o)) 
       &= d(\vp(t;\t_{-t}\o)v(\t_{-t}\o),\td \mcA(\o)) \\
       &\le d(\vp(t;\t_{-t}\o)K_\ve,\td \mcA(\o)),\quad \forall \o \in \t_t\O_\ve.
    \end{align*}
    Let $\d > 0$. Then,
      $$\P[d(v,\td \mcA) > \d] \le  \P[d(\vp(t;\t_{-t}\cdot)K_\ve,\td \mcA) > \d] + \ve \le 2 \ve, $$
    for all $t$ large enough. Hence, $v \in \td \mcA$, $\P$-almost surely and $\mcA$ is shown to be the minimal weak point/interval attractor.    
\end{proof}

\appendix
\section{Existence of unique solutions to monotone SPDE}\label{app:var_spde}
We recall the variational approach to monotone SPDE (cf.\ \cite{KR79,P75,PR07}). Let $V \subseteq H \subseteq V^*$ be a Gelfand triple, $(\O,\mcG,\{\mcG_t\}_{t \in [0,T]},\P)$ be a normal filtered probability space and $W$ be a cylindrical Wiener process on some separable Hilbert space $U$. Further assume that
  \[ A: [0,T] \times V \times \O \to V^*,\  B: [0,T] \times V \times \O \to L_2(U,H) \]
are $\mcG_t$-progressively measurable. We extend $A, B$ by $0$ to all of $[0,T] \times H \times \O$.

\begin{definition}\label{def:prob_soln}
  A continuous, $H$-valued, $\mcG_t$-adapted process $\{X_t\}_{t \in [0,T]}$ is called a solution to 
  \begin{equation}\label{eqn:spde3}
    dX_t = A(t,X_t)dt + B(t,X_t)dW_t
  \end{equation}
  if $X \in L^{\a}([0,T]\times\O;V)$ and $\P$-a.s.
    \[ X_t = X_0 + \int_0^t A(r,X_r)dr + \int_0^t B(r,X_r)dW_r,\ \forall t \in [0,T]. \]
\end{definition}

We assume that there are $\a > 1$, $c,C > 0$ and $f \in L^1([0,T]\times\O)$ positive and $\mcG_t$-adapted such that 
\begin{enumerate}
 \item[(H1):] (Hemicontinuity) The map $s \mapsto \Vbk{A(t,v_1+sv_2),v}$ is continuous on $\R$,
 \item[(H2):] (Monotonicity) 
    \[ 2 \Vbk{A(t,v_1)-A(t,v_2),v_1-v_2} + \|B(t,v_1)-B(t,v_2)\|_{L_2(U,H)}^2 \le C\|v_1-v_2\|_H^2,\]
 \item[(H3):] (Coercivity)
    \[ 2 \Vbk{A(t,v),v} + \|B(t,v)\|_{L_2(U,H)}^2 + c\|v\|_V^\a \le f_t + C\|v\|_H^2, \]
 \item[(H4):] (Growth)
  \begin{align*}
    \left\| A(t,v) \right\|_{V^*}^\frac{\a}{\a-1} \le C\|v\|_V^\a+f_t,
  \end{align*}   
\end{enumerate}
for all $v_1, v_2, v \in V$ and $(t,\o) \in [0,T]\times\O$.

\begin{theorem}[\cite{PR07}, Theorem 4.2.4]\label{thm:var_ex}
  Assume $(H1)$--$(H4)$. Then, for every $X_0 \in L^2(\O,\mcG_0;H)$ \eqref{eqn:spde3} has a unique solution $X$ satisfying
    \[ \E \left( \sup_{t \in [0,T]}\|X_t\|_H^2 + \int_0^T \|X_r\|_V^\a dr \right) < \infty.\]
\end{theorem}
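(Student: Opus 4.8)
The plan is to carry out the classical Galerkin approximation together with the monotonicity (Minty--Browder) argument, following \cite{KR79,P75}. First I would fix a basis $\{e_i\}_{i\in\N}\subseteq V$ that is orthonormal in $H$, set $H_n:=\mathrm{span}\{e_1,\dots,e_n\}$ and let $P_n$ be the orthogonal projection onto $H_n$, viewed as a map $V^*\to H_n$. Using hemicontinuity $(H1)$, the local monotonicity contained in $(H2)$, and the growth and coercivity bounds $(H3)$--$(H4)$, the standard theory of finite-dimensional stochastic differential equations produces a unique continuous $\mcG_t$-adapted $H_n$-valued solution $X^n$ on $[0,T]$ of the Galerkin equation
\[ dX^n_t = P_n A(t,X^n_t)\,dt + P_n B(t,X^n_t)\,dW_t,\qquad X^n_0 = P_n X_0. \]

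Next I would establish uniform a priori bounds. Applying It\^o's formula to $\|X^n_t\|_H^2$, inserting coercivity $(H3)$, taking expectations, and using Burkholder's inequality on the stochastic integral together with Gronwall's lemma and $f\in L^1([0,T]\times\O)$ yields
\[ \sup_n\E\Big(\sup_{t\in[0,T]}\|X^n_t\|_H^2+\int_0^T\|X^n_r\|_V^\a\,dr\Big)<\infty; \]
then $(H4)$ bounds $A(\cdot,X^n_\cdot)$ uniformly in $L^{\a/(\a-1)}([0,T]\times\O;V^*)$ and $(H3)$ bounds $B(\cdot,X^n_\cdot)$ uniformly in $L^2([0,T]\times\O;L_2(U,H))$. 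By reflexivity I extract a subsequence (not relabelled) along which $X^n\rightharpoonup X$ in $L^\a([0,T]\times\O;V)$, $X^n_T\rightharpoonup\xi$ in $L^2(\O;H)$, $A(\cdot,X^n)\rightharpoonup Y$ in $L^{\a/(\a-1)}([0,T]\times\O;V^*)$ and $B(\cdot,X^n)\rightharpoonup Z$ in $L^2([0,T]\times\O;L_2(U,H))$. Passing to the limit in the Galerkin identity tested against $e_j$ times bounded $\mcG_t$-adapted scalar processes, one checks that $X$ has a continuous $H$-valued $\mcG_t$-adapted version satisfying $X_t=X_0+\int_0^tY_r\,dr+\int_0^tZ_r\,dW_r$ for all $t\in[0,T]$, $\P$-a.s., with $X_T=\xi$ and $\E\|X_T\|_H^2\le\liminf_n\E\|X^n_T\|_H^2$.

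The decisive step is the identification $Y=A(\cdot,X)$, $Z=B(\cdot,X)$ via the monotonicity trick. Writing It\^o's formula for $e^{-Ct}\|X^n_t\|_H^2$, replacing $2{}_{V^*}\langle A(r,X^n_r),X^n_r\rangle_V+\|B(r,X^n_r)\|_{L_2(U,H)}^2$ by an expression that is affine in $X^n$ by means of $(H2)$ and an arbitrary $\Phi\in L^\a([0,T]\times\O;V)$, then taking $\liminf_n$ with the weak convergences above and weak lower semicontinuity of $\E\,e^{-CT}\|\cdot\|_H^2$, and subtracting the It\^o identity for the limit equation, one obtains for every such $\Phi$
\[ \E\int_0^Te^{-Cr}\Big(2{}_{V^*}\langle Y_r-A(r,\Phi_r),\,X_r-\Phi_r\rangle_V+\|Z_r-B(r,\Phi_r)\|_{L_2(U,H)}^2\Big)\,dr\le\E\int_0^Te^{-Cr}\,C\|X_r-\Phi_r\|_H^2\,dr. \]
Choosing $\Phi=X$ gives $Z=B(\cdot,X)$; choosing $\Phi=X-\ve\td\phi v$ with $v\in V$, $\td\phi\in L^\infty([0,T]\times\O)$, dividing by $\ve>0$ and letting $\ve\downarrow0$ (so that the surviving $A$-difference vanishes by $(H1)$) gives $Y=A(\cdot,X)$. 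Hence $X$ solves the equation in the sense of Definition \ref{def:prob_soln}, and the asserted bound is the a priori estimate transported to $X$ by weak lower semicontinuity (for the $V$-integral) and by It\^o's formula for $\|X_t\|_H^2$ on the limit equation (for the supremum in time). Uniqueness is immediate: for two solutions $X,\td X$, It\^o's formula for $e^{-Ct}\|X_t-\td X_t\|_H^2$ combined with $(H2)$ gives $\E\,e^{-Ct}\|X_t-\td X_t\|_H^2\le0$ for every $t$, whence $X\equiv\td X$. The main obstacle is the careful execution of the monotonicity trick when $B$ is genuinely state-dependent — the drift and diffusion contributions must be handled jointly inside the $\liminf$ — and the rigorous justification of the variational It\^o formula for the limit process, which a priori is only known to lie in $L^\a([0,T]\times\O;V)\cap L^2(\O;C([0,T];H))$; once these points are in place the remaining steps are routine.
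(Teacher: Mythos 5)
Your proposal is correct and follows essentially the same route as the paper, which proves nothing itself here but quotes \cite[Theorem 4.2.4]{PR07}: the classical Galerkin approximation, uniform a priori bounds via It\^o's formula, coercivity, Burkholder and Gronwall, weak compactness, and the identification of the limits $Y=A(\cdot,X)$, $Z=B(\cdot,X)$ through the exponentially weighted monotonicity (Minty--Browder) trick together with hemicontinuity, with uniqueness from $(H2)$ and Gronwall. The technical points you flag (joint handling of drift and diffusion inside the $\liminf$, and the It\^o formula for the $V^{*}$-valued limit process) are exactly the ones treated in \cite{KR79,PR07}, so nothing essential is missing.
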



\bibliographystyle{plain}
\bibliography{refs}   

\end{document}